\DeclareSymbolFont{yhlargesymbols}{OMX}{yhex}{m}{n}
\DeclareMathAccent{\wideparen}{\mathord}{yhlargesymbols}{"F3}
\numberwithin{equation}{section}
\newtheorem{theorem}{Theorem}[section]
\newtheorem{lemma}[theorem]{Lemma}
\newtheorem{proposition}[theorem]{Proposition}
\theoremstyle{remark}
\newtheorem{remark}[theorem]{Remark}
\newtheorem{example}[theorem]{Example}
\newcommand{\bke}[1]{\left ( #1 \right )}
\newcommand{\bkt}[1]{\left [ #1 \right ]}
\newcommand{\bket}[1]{\left \{ #1 \right \}}
\newcommand{\norm}[1]{\left  \| #1  \right \|}
\newcommand{\abs}[1]{\left | #1 \right |}
\newcommand\al{\alpha}
\newcommand\be{\beta}
\newcommand\de{\delta}
\newcommand\ve{\varepsilon}
\newcommand\e {\varepsilon}
\newcommand\ka{\kappa}
\newcommand\la{\lambda}
\newcommand\si{\sigma}
\newcommand\Ga{\Gamma}
\newcommand\De{\Delta}
\newcommand\Om{\Omega}
\newcommand{\R}{\mathbb{R}}
\newcommand{\ZZ}{\mathbb{Z}}
\newcommand{\NN}{\mathbb{N}}
\renewcommand{\div}{\mathop{\rm div}\nolimits}
\newcommand{\supp} {\mathop{\mathrm{supp}}\nolimits}
\newcommand{\esssup} {\mathop{\rm ess\,sup}}
\newcommand{\cM}{\mathcal{M}}
\newcommand{\pd}{\partial}
\newcommand{\nb}{\nabla}
\newcommand{\td}{\tilde}
\newcommand{\lec}{{\ \lesssim \ }}
\newcommand{\gec}{{\ \gtrsim \ }}
\newcommand{\I}{\infty}
\newcommand{\oo}{\infty}
\newcommand{\Eq}[1]{\begin{equation*}#1\end{equation*}}
\newcommand{\EQ}[1]{\begin{equation}#1\end{equation}}
\newcommand{\EQS}[1]{\begin{equation}\begin{split} #1 \end{split}\end{equation}}
\newcommand{\EQN}[1]{\begin{equation*}\begin{split} #1 \end{split}\end{equation*}}
\newcommand{\EN}[1]{\begin{enumerate} #1 \end{enumerate}}
\newcommand{\loc}{\mathrm{loc}}
\newcommand{\uloc}{\mathrm{uloc}}
\newcommand{\one}{\mathbbm{1}}
\begin{document}
\title{\vspace{-1.2cm}Mild solutions and spacetime integral bounds for Stokes and Navier-Stokes flows in Wiener amalgam spaces}

\author[1]{\rm Zachary Bradshaw}
\author[2]{\rm Chen-Chih Lai \thanks{Corresponding author; Email address: cl4205@columbia.edu}} 
\author[3]{\rm Tai-Peng Tsai}
\affil[1]{
\footnotesize Department of Mathematics, University of Arkansas, Fayetteville, AR 72701, USA}
\affil[2]{
\footnotesize Department of Mathematics, Columbia University, New York, NY 10027, USA}
\affil[3]{
\footnotesize Department of Mathematics, University of British Columbia, Vancouver, BC V6T 1Z2, Canada}

\date{}

\maketitle

\vspace{-1.0cm}

\begin{abstract}
We first prove decay estimates and spacetime integral bounds for Stokes flows in amalgam spaces $E^r_q$ which connect the classical Lebesgue spaces to the spaces of uniformly locally $r$-integrable functions. Using these estimates, we construct mild solutions of the Navier-Stokes equations in the amalgam spaces satisfying the corresponding spacetime integral bounds. Time-global solutions are constructed for small data in $E^3_q$, $1\le q \le 3$. Our results provide new bounds for the  strong solutions classically constructed by Kato and the more recent solutions in uniformly local spaces constructed by Maekawa and Terasawa.  As an application we obtain a result on the stability of suitability for weak solutions to the perturbed Navier-Stokes equation where the drift velocity solves the Navier-Stokes equations and has small data in a local $L^3$ class. Extending an earlier result, we also construct {global-in-time} local energy weak solutions in {$E^2_q$}, $1\le q <2$. 
\end{abstract}

\renewcommand{\baselinestretch}{0.8}\normalsize
\tableofcontents
\renewcommand{\baselinestretch}{1.0}\normalsize

\section{Introduction}

This paper considers mild solutions with decay estimates and spacetime integral bounds of the nonstationary Stokes system  and the Navier-Stokes equations in $\R^d$, $d=3$, for initial data in uniformly local $L^q$ spaces and Wiener amalgam spaces. The nonstationary Stokes system in $\R^d$ reads
\begin{equation}\label{Stokes}
\left.
\begin{aligned}
u_{t}-\Delta u+\nabla \pi=f \\
 \div u=0
\end{aligned}\ \right\}\ \ \mbox{in}\ \ \R^{d}\times (0,\infty),
\end{equation}
with initial condition
\begin{equation}\label{E1.2}
u(\cdot,0)=u_0, \quad \div u_0=0.
\end{equation}
Here  $u=(u_{1},\ldots,u_{d})$ is the velocity, $\pi$ is the pressure, and $f=(f_{1},\ldots,f_{d})$ is the external force. 
The Navier-Stokes equations reads
\begin{equation}\label{NS}
\left.
\begin{aligned}
u_{t}-\Delta u+\nabla \pi= - u\cdot \nb u \\
 \div u=0
\end{aligned}\ \right\}\ \ \mbox{in}\ \ \R^{d}\times (0,\infty),
\end{equation}
with initial condition \eqref{E1.2}.

A classical approach to studying \eqref{NS} is to first establish estimates for the heat equation and \eqref{Stokes} and then extend these to the nonlinear system \eqref{NS} using a Picard iteration. This was done in $L^p$ spaces for ${d}<p<\I$ by Fabes, Jones and Riviere \cite{FJR}  and {$L^d$} by Kato \cite{Kato}. The endpoint case $p=\I$ has been examined by Giga, Inui and Matsui \cite{GIM} as well as Kukavica \cite{Kukavica}.  The solutions generated by these arguments satisfy an integral formula and are referred to as \emph{mild solutions}. In the present context, they are smooth and unique. The cases {$L^d$} and $L^p$ for {$p>d$} are qualitatively distinct in that, for {$L^d$}, global existence is known but only  for small data and local existence is known for large data but the time-scale is not related to the size of the data. On the other hand,  for $L^p$ where {$p>d$}, local existence is known for arbitrarily large data and the time-scale is dependent on the size of the data, but global existence is unknown.

To develop a better understanding of this subject, note that large and small scales play different roles in the regularity of solutions to the Navier-Stokes equations, which represents the fact that, for $p_1 > p_2$, $L^{p_1}$ has better decay at small scales and worse decay at large scales than $L^{p_2}$. Additionally, for parabolic equations, small physical scales in the initial data act primarily on short time scales in the solution while the same is true for large physical scales and large time scales. Hence, if the data is sufficiently regular at small scales, then it should be smooth for small enough times. {Decay} of the data at large scales should lead to regularity at large times. For $d=3$, this theme holds true for the Navier-Stokes equations in $L^p$ spaces where the endpoint space for both scenarios is $L^3$. Indeed, for data in $L^2$, the time-global weak solutions of Leray  exhibit eventual regularity. For data in $L^p$ with $p>3$, the time-local mild solutions of Fabes, Jones and Riviere \cite{FJR}  as well as those of Giga,  Inui and Matsui  \cite{GIM} are smooth.

The small and large scale decay assumptions on the data can be refined. For example, 
Maekawa and Terasawa \cite{MaTe} extended the $L^\I$ theory to a scale of spaces $L^r_\uloc$ defined by the   norm 
\[
\| a\|_{L^r_\uloc}:= \sup_{x_0\in \R^d} \| a\|_{L^r(B_1(x_0))}.
\]

In the case of $L^d_\uloc$, we are considering a space with local $L^d$ integrability properties and globally worse than $L^d$ decay properties. Hence we only expect short time existence of a smooth solution for small data and this is what is proven in \cite{MaTe}---the long time result of \cite{Kato} is out of reach due to a lack of decay. When $d<r<\I$, Maekawa and Terasawa \cite{MaTe} prove local existence of strong mild solutions in analogy with \cite{FJR}. Note that space-time integral estimates of Kato \cite{Kato} (due to Giga) aren't included in \cite{MaTe}. 

The classical Lebesgue spaces and the uniformly local spaces can be connected in a sense by the Wiener amalgam spaces.   These spaces have a rich literature \cite{BL,BS,CKS,FoSt,Holland,KNTYY,Lakey} and have been applied to the analysis of fluids in, e.g., \cite{KNTYY,AB,BT4}. The Wiener amalgam spaces are denoted $E^p_q$ and defined by the norm  
\[
\|  a \|_{E^p_q} :=\bigg\|   \bigg( \int_{B_1(k)}|a(x)|^p\,dx  \bigg)^{1/p} \bigg\|_{\ell^q(k\in {\ZZ^d})}<\I.
\]
We identify $E^p_\I$ with $L^p_\uloc$.\footnote{Let $E^p$ be the closure of the test functions under the $L^p_\uloc$ norm, a notation used in \cite{LR2}. Our convention in this paper is $E^p \subsetneqq E^p_\infty = L^p_\uloc$. We used $E^p_\infty=E^p$ in 
\cite{BT4} since $E^p$ preserves a decay property at spatial infinity  which is also enjoyed by $E^p_q$ where $q<\I$.} So, $E^p_q$ encodes local $L^p$ integrability and global $L^q$ decay. We have the embeddings
\EQ{\label{eq1.4}
E^p_q \subset E^p_m \text{ for }m>q\text{ and }E^p_q \subset E^r_q \text{ for }r<p.
}
For the classical Lebesgue spaces, $L^p=E^p_p$, we therefore have the embeddings
\[
L^p \subset E^p_q  \text{ and }L^q\subset E^p_q \text{  if }q>p,
\]
as well as
\[
L^p \supset E^p_q  \text{ and }L^q\supset E^p_q \text{  if }p>q.
\]
We will use the H\"older inequality for $p,p_1,p_2,q,q_1,q_2 \in [1,\infty]$
\EQ{\label{Holder}
\norm{fg}_{E^p_q} \le  \norm{f}_{E^{p_1}_{q_1}}  \norm{g}_{E^{p_2}_{q_2}}, \quad \frac 1p= \frac 1{p_1}+\frac1{p_2},\quad
\frac 1q\le \frac 1{q_1}+\frac1{q_2}.
}It implies \eqref{eq1.4} by taking $g=1$.%

The main goal of this paper is to develop a theory of strong solutions in the Wiener amalgam spaces $E^p_q$ where $d=3\le p\leq \infty$ and $1\le q\le\infty$. 
Our theory encompasses the following classical results on strong solutions in $L^p$-type spaces in $\R^d$ when $d=3$, in chronological order (this is an incomplete list, of course; it does not include solutions in Sobelev or Besov spaces, nor weak solutions):

\EN{

\item Fabes, Jones and Riviere \cite{FJR} show local existence for any $u_0 \in L^r(\R^d)$, $r>d\ge3$. Their
 Theorem (4.3) gives global existence for small $u_0\in   L^{r_1} \cap L^{r_2}(\R^d)$,    $ r_1 < d < r_2$.
Their solutions belong to $L^s (0,T; L^p)$, $\frac dp + \frac 2s \le 1$, for each finite $T$. %

\item Giga-Miyakawa \cite{GiMi} (submitted before \cite{Kato} although published later) includes global solutions for small $L^d$ initial data in bounded domains in $\R^d$---see Theorem 2.6. It estimates the nonlinear term by means of suitable fractional powers of the Stokes operator. This approach is extended to exterior domains (see, e.g.,~\cite{GS1, GS2}) and, although it has not been explored in the literature, it seems likely that it also extends to $\R^d$ ($d\ge3$) where the Stokes operator and Helmholtz projector have explicit forms.

\item Kato \cite{Kato} constructs global strong solutions for small initial data in $L^d(\R^d)$ and establishes
the spacetime integral bound $u \in L^s(0,\oo;L^p(\R^d))$ using Giga's estimate \eqref{Giga-est}; see \cite{Giga}. This  spacetime integral bound is not necessary for existence, but is used in the proof of Theorem 2$'$ which asserts  that $\|u(t)\|_{L^d} \to 0$ in time average as $t \to \infty$; see \eqref{timeaveragezerolimit} and \cite[(2.15)]{Kato}.

\item Giga \cite{Giga} shows \eqref{Giga-est} and develops the $L^r$-theory for general domains and equations. 

\item  Giga, Inui and Matsui \cite{GIM} and Kukavica \cite{Kukavica} prove local existence  of strong solutions in $L^\infty({\R^d})$.
The analyticity of the Stokes semigroup is established by Abe and Giga \cite{AG} for bounded domains and by Hieber and Maremonti \cite{HM} for an exterior domains in $\R^d$ for $d\ge3$. 
More results on the Navier-Stokes equations are given by Abe \cite{Abe2015} in the $L^\infty$ framework for a class of domains including bounded and exterior domains.

\item Maekawa and Terasawa \cite{MaTe} establish local existence of strong solutions  in   $L^r_\uloc(\R^d)$, $r \ge d$.
}

Global spacetime integral bounds are shown only in \cite{Kato,Giga}.

Note that because $E^r_q \subset L^{{r}}_\uloc$ where a strong solution theory exists, the novel contribution of our work concerns  the persistence of norms in the Wiener amalgam spaces, in addition to new spacetime estimates.  Moreover, our second exponent $q$ in $E^r_q$ (measuring spatial decay), is allowed to go down to $q=1$, which will lead to solutions which are more localized in space. The lower bound $r \ge 3=d$   is expected since it is the borderline for local regularity theory.

Some classical estimates in Wiener amalgam spaces can be found in \cite{BS}. The theme of ``filling in'' spaces between a classical Lebesgue spaces and a uniformly locally Lebesgue space using amalgam spaces is explored for \emph{weak} solutions in spaces $E^2_q$ between $L^2$ and $L^2_\uloc$ by the first and last author in \cite{BT4}.  Hence this paper can be considered a companion of \cite{BT4}.  The exponent $q$ in \cite{BT4} for weak solutions is limited to $2\le q \le \infty$. We will sketch necessary changes to extend the weak solution theory to $E^q$, $1\le q <2$, in Section \ref{sec:global-weak}.

\subsection{Spacetime integrals}

Spacetime integral bounds are useful in analyzing solutions of the Navier-Stokes equations because they provide more information about spatial-temporal decay.
In particular, the borderline integrability inferred from spacetime integral bounds at $t=0+$ and $t=T_{\max}-$ can be used to study the regularity of the solutions.
Moreover, spacetime integral bounds are used by Kato \cite[(2.15)]{Kato} to show the convergence of time average of the $L^d$ norm
\EQ{\label{timeaveragezerolimit}
\frac1T \int_0^T \norm{u(t)}_{L^d(\R^d)}\to0\ \text{ as }\ T\to\infty,
}
for all $d\ge2$, and $\norm{u(t)}_{L^2}\to0$ for $d=2$.

We will consider two kinds of spacetime integrals: For $0<T\le \I$, $x\in \R^d$, and $1\le s,p,q\le\infty$, define the norms $L^s_T E^p_q$ and $E^{s,p}_{T,q}$ as follows:
\EQ{\label{LsEpq-def}
\| u\|_{L^s_T E^p_q} := \|  u \|_{L^s(0,T; E^p_q(\R^d))}, 
}
and  
\EQ{\label{Espq-def}
\norm{u}_{E^{s,p}_{T,q}}:=
\norm{\norm{ u}_{L^s_TL^p(B_1(k))}}_{\ell^q(k\in \ZZ^d)}.
}
These norms are different from each other when $s\neq q$.
By Minkowski's integral inequality,
\EQ{\label{ineq:embedding.parabolic.space}
\norm{u}_{L^s _T E^p_q} \le \norm{u}_{E^{s,p}_{T,q}},
\qquad \text{ if }q\le s,
}
and
\EQ{\label{ineq:embedding.parabolic.space2}
\norm{u}_{E^{s,p}_{T,q}} \le \norm{u}_{L^s _T E^p_q},
\qquad \text{ if }q\ge s.
}

The reversed inequalities are typically wrong. See Example \ref{example52}.

The class $E^{s,p}_{T,q}$ seems more appropriate to parabolic equations than the class $L^s_T E^p_q$.
Indeed, for the heat semigroup in $\R^d$, when $s<\I$ it follows from 
Giga \cite{Giga}  that
\EQ{\label{Giga-est}
\norm{e^{t\De} a}_{L^s(0,\infty; L^p(\R^d))} \lec \norm{a}_{L^r(\R^d)},
}
if
\EQ{\label{0812b}
1<r\le s\le\infty,\quad r\le p<\infty, \quad
\frac 2s + \frac dp = \frac dr.
}
The case $s=\I$ is classical.
A special case of our Lemma \ref{lemma:linear.heat.amalgam.spacetime-new} extends the above to
\EQ{\label{0201a}
\mathbbm{1}_{m\le s}  \norm{e^{t\De} a}_{L^s_{T=\infty} E^p_{m} }  \lec \|e^{t\Delta}a\|_{E^{s,p}_{T=\infty,m}} \lesssim  \|a\|_{E^r_q},	} 
if
\[
1<r\le s\le\infty,\quad r\le p<\infty, \quad
\frac 2s + \frac dp = \frac dr, \quad 1<q<m<\infty, \quad \frac 2s + \frac dm \le \frac dq.
\]
Lemma \ref{lemma:linear.heat.amalgam.spacetime-new} also has  finite time estimates which allow $1\le q\le m\le\infty$ without the restriction $\frac 2s + \frac dm \le \frac dq$.

In \eqref{Giga-est}, the case $r<s$ is proved in \cite[Lemma, p.~196]{Giga} using the Marcinkiewicz interpolation theorem. 
It is mentioned in \cite[Acknowledgments]{Giga} that the case $r=s$ in \eqref{Giga-est} is also valid if one appeals to the generalized Marcinkiewicz theorem. We will give details for $r=s$ in the Appendix (\S\ref{sec5.1}). The case $r>s$ was unclear. We will show in Example \ref{example2} that \eqref{Giga-est} is actually false when $r>s$ even for $T<\infty$.
Moreover, Example \ref{example2} shows that if $s< q \le \infty$, then there is $a \in E^r_q$ with $e^{t\De}a \not \in L^s_{T=1} E^p_m$. Thus the factor $\mathbbm{1}_{m\le s} $ in \eqref{0201a} is necessary. In particular, when $a\in L^r_\uloc$,  $e^{t\De}a $ is bounded in $E^{s,p}_{T=1,\infty}$, but not in $L^s_{T=1} E^p_\infty$.

\subsection{Mild solutions in amalgam spaces}\label{sec1.2}
 
We now present three theorems on mild solutions  to \eqref{NS} in $\R^3$. 
Recall that a mild solution is a solution $u$ to \eqref{NS} satisfying
\[
u(x,t) =e^{t\Delta}u_0 -B(u,u)(t); \qquad B(f,g)(t)= \int_0^t e^{(t-s)\Delta} \mathbb P\nb\cdot (f\otimes g)\,ds,
\]
where $\mathbb P$ is the Helmholtz projection operator.  When the spatial domain is $\R^3$, we can express $B$ by the Oseen tensor, see \eqref{Bfg.def}.

The following theorem concerns data which is locally \emph{subcritical}, i.e., $u_0 \in E^r_q$ with $r>3$. When $q=\I$ it includes  \cite[Theorem 1.1 (i)]{MaTe} and when $r=q<\I$ it includes the results from \cite{FJR}.

\begin{theorem}[Subcritical data]\label{thrm.subcritical}
Let $r\in (3,\I]$  and $q\in[1,\infty]$. If $u_0\in E^r_q$ is divergence free, then, for any  positive time $T=T(\|u_0\|_{E^r_q})$ chosen so that
\EQ{ \label{def:T.subcrit}    T^{1/2 - 3/(2r)} +  T^{1/2 }	  
\lec \|u_0\|_{E^r_q}^{-1},
} there exists a unique mild solution $u\in L^\infty(0,T;E^r_q) \cap C((0,T);E^r_q)$ to \eqref{NS}. Moreover, $u$ satisfies
\EQ{ \label{th1.1eq2}
\sup_{0\leq t\leq T} \| u(t)\|_{E^r_q} \leq C \|u_0\|_{E^r_q}. 
}

If $q,r<\I$, then $u\in C([0,T];E^r_q)$. If $q=\I$ or $r=\I$, then we still have $\|e^{t\Delta}u_0- {u(t)}\|_{E^r_q}\to 0$ as $t\to 0^+$. 

Furthermore, if $r<\infty$, then for any $s \in [r,\infty]$ and $p\in[r,3r]$ 
with $\frac2s + \frac3p = \frac3r$,
\[
\norm{u}_{ E^{s,p}_{T,m\ge q}} \le C \norm{u_0}_{E^r_q}
\]
provided $(1+T^{\frac1s+\epsilon})(T^{\frac12-\frac3{2r}} + T^{1-\frac1s}) \lec \norm{u_0}_{E^r_q}^{-1}$ for all $\epsilon>0$. 
\end{theorem}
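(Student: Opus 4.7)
The plan is to construct $u$ by a Picard iteration on the mild formulation $u=e^{t\De}u_0-B(u,u)$, working in a small closed ball of $X_T:=L^\infty(0,T;E^r_q)$. The existence, uniqueness, and the bound \eqref{th1.1eq2} will all fall out of the contraction argument, with the smallness driven by the two $T$-factors in \eqref{def:T.subcrit}.

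The two ingredients are a linear bound $\|e^{t\De}u_0\|_{E^r_q}\lec\|u_0\|_{E^r_q}$ uniform in $t\ge0$, which comes from the heat-semigroup estimates on Wiener amalgam spaces developed earlier in the paper, and a bilinear estimate for $B$. For the latter I would use H\"older's inequality \eqref{Holder} together with the embedding \eqref{eq1.4} to obtain
\[
\|u\otimes v\|_{E^{r/2}_q}\lec\|u\|_{E^r_{2q}}\|v\|_{E^r_{2q}}\lec\|u\|_{E^r_q}\|v\|_{E^r_q},
\]
and then the Oseen-kernel estimate in amalgam spaces (also proved earlier) to get
\[
\|B(u,v)(t)\|_{E^r_q}\lec\int_0^t(t-s)^{-\frac12-\frac3{2r}}\|u(s)\|_{E^r_q}\|v(s)\|_{E^r_q}\,ds\lec T^{\frac12-\frac3{2r}}\|u\|_{X_T}\|v\|_{X_T},
\]
where the integral converges precisely because $r>3$. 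The endpoint $r=\infty$ is handled analogously and produces the $T^{1/2}$ factor in \eqref{def:T.subcrit}. A standard contraction in a ball of radius $\simeq\|u_0\|_{E^r_q}$ then yields the mild solution; uniqueness in $L^\infty_T E^r_q$ follows by applying the same bilinear bound to the difference of two solutions and bootstrapping.

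For continuity, $u\in C((0,T);E^r_q)$ is immediate from the strong continuity of $e^{(t-s)\De}$ on $E^r_q$ for $t>s>0$ together with the uniform integrability of the Duhamel kernel. At $t=0$, when $q,r<\infty$ the Schwartz class is dense in $E^r_q$, so $e^{t\De}u_0\to u_0$ in norm, giving $u\in C([0,T];E^r_q)$. When $q=\infty$ or $r=\infty$ the heat semigroup is not strongly continuous at $0$, but the bilinear estimate still gives $\|B(u,u)(t)\|_{E^r_q}\lec t^{1/2-3/(2r)}\|u\|_{X_T}^2\to0$, which is exactly the weaker statement in the theorem.

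For the spacetime bound in $E^{s,p}_{T,m}$, the linear part is handled by the finite-time version of \eqref{0201a}, giving $\|e^{t\De}u_0\|_{E^{s,p}_{T,m}}\lec\|u_0\|_{E^r_q}$ for $m\ge q$ on the Serrin line $\frac{2}{s}+\frac{3}{p}=\frac{3}{r}$. For the bilinear part I would again apply H\"older in amalgam spaces to $u\otimes v$ followed by the $E^p_q$-Stokes kernel estimate, now matching the space-time integrability to the Serrin exponent of the output. I expect the main obstacle to be choosing the auxiliary exponents so that H\"older \eqref{Holder}, the Giga-type parabolic bound, and the desired output $(s,p,m)$ are all simultaneously compatible; the factor $(1+T^{1/s+\epsilon})$ in the hypothesis on $T$ likely reflects an $\epsilon$-loss at the Marcinkiewicz endpoint $r=s$ of the linear estimate (the case discussed after \eqref{Giga-est}), where one must pass through nearby Lorentz spaces. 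Once the bilinear $E^{s,p}_{T,m}$-bound is in place, combining it with the $X_T$-bound already obtained and the linear estimate closes the argument in one shot.
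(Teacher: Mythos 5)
Your proposal follows essentially the same route as the paper: Picard iteration in $L^\infty(0,T;E^r_q)$ using the amalgam heat/Oseen estimates of Lemma \ref{Wa-estimate} together with the H\"older inequality \eqref{Holder} and the embedding \eqref{eq1.4}, continuity at $t=0$ by density when $q,r<\infty$ and by the vanishing of $B(u,u)(t)$ otherwise, and a second contraction in $\mathcal E_T\cap E^{s,p}_{T,q}$ for the spacetime bound. Two details are off, though neither changes the architecture. First, with $\td q=q$ the Oseen estimate \eqref{Wa-estimate-bilinear} carries a second kernel term $\one_{t-\tau>1}(t-\tau)^{-1/2}$ for \emph{every} $r\in(3,\infty]$; this, and not the case $r=\infty$, is the source of the $T^{1/2}$ in \eqref{def:T.subcrit}. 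Your displayed bilinear bound keeps only $(t-\tau)^{-1/2-3/(2r)}$, which understates $B$ when $t-\tau>1$ and would not close the contraction under \eqref{def:T.subcrit} for times $T>1$; you must carry both terms, exactly as the paper does. Second, the $\epsilon$ in $(1+T^{1/s+\epsilon})$ does not reflect a loss at the Marcinkiewicz endpoint $r=s$ of \eqref{Giga-est} (that endpoint is handled in the appendix with no loss); it comes from the far-field part of the linear estimate in Lemma \ref{lemma:linear.heat.amalgam.spacetime-new}, where one needs $\be>\al=\frac{3}{2m}-\frac{3}{2q}+\frac1s=\frac1s$ when $m=q$ and the exponents are not strictly interior. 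Finally, note that the theorem asserts uniqueness in the full class $L^\infty(0,T;E^r_q)\cap C((0,T);E^r_q)$, not merely in the Picard ball, so the subinterval bootstrap you mention is genuinely needed and should be spelled out.
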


\emph{Comments on Theorem \ref{thrm.subcritical}:}
\begin{enumerate}

\item The apparent dimensional mismatch between terms in \eqref{def:T.subcrit} is corrected by suppressed dimensional constants of magnitude $\sim 1$ and this comment holds throughout the paper.  

\item Our uniqueness class does \emph{not} require the bound \eqref{th1.1eq2}.
 The uniqueness class can be weakened to the uniqueness class of  Maekawa and Terasawa \cite{MaTe} by the embedding $E^r_q \subset L^r_\uloc$. 
 
\item Also note that, when  $q=\I$ or $r=\I$, we do not have  $\|e^{t\Delta}u_0- u_0\|_{E^r_q}\to 0$ as $t\to 0^+$. See 
Lemma \ref{lemma:heat.eq.convergence.data}. For $q=\I$, we   have by \cite[Theorem 1.2]{MaTe} that,  for any ball $B$,
\[
\lim_{t\to 0^+}\| u-u_0\|_{L^r(B)} =0.
\]

\item {When $q \le 3 < r$, we expect global solutions for small data in $E^r_q$ in
\EQ{\label{1.15}
u \in C([0,\infty); E^r_q)  .
}
It should be the same solution considered in
$
u \in C([0,\infty); E^3_q)$ in Theorem \ref{thrm:critical2}. The estimate \eqref{1.15} does not give better spatial-temporal decay than Theorem \ref{thrm:critical2}. %
Since we expect regularity for $t>1$, the main gain of \eqref{1.15} is extra integrability for $t\sim 0_+$, which
may alternatively be obtained from local smoothing estimates in \cite{JS,BP2020,KMT}. Hence we do not pursue it.}
\end{enumerate}

We now turn to the case of data $u_0\in E^3_q$, which we refer to as \emph{critical}.

\begin{theorem}[Critical data I]\label{thrm:critical} Let $q\in[1,\I]$. Fix $T>0$.
There exists $\ve=\ve(T)>0$ such that for all {divergence-free} $u_0\in E^3_q$ with $\norm{u_0}_{E^3_q}\le\ve$, there exists a mild solution $u$  to \eqref{NS} with 
\[
u \in L^\infty(0,T; E^3_q)\quad \text{and}\quad t^{\frac12} u \in L^\infty (0,T; E^\oo_q).
\]
The solution is unique in the class 
\EQ{\label{eq-uniquesmall}
\sup_{0<t<T} t^{\frac14} \norm{u}_{E^6_q} \le 2 \sup_{0<t<T} t^{\frac14} \norm{e^{t\De} u_0}_{E^6_q}.
}
Furthermore, $\norm{u}_{L^\infty_T E^3_q} + \norm{t^{1/2}u}_{L^\infty_T E^\oo_q} \lec \norm{u_0}_{E^3_q}$. We have $u\in C((0,T); E^3_q) $ for $q=\I$ and $u\in C([0,T); E^3_q)$ for $q<\I$. If $q=\I$, then we have for any ball $B$ and $\delta\in (0,2]$ that 
\EQ{\label{convergencetodata}
\lim_{t\to 0^+}\|  u (t) - u_0\|_{L^{3-\delta}(B)}=0.
}

For any $s\in [3,\infty]$ and $p\in [3,9]$ given by $\frac2s + \frac3p = 1$, by taking $\ve \le \ve_0(T,s)$ sufficiently small, this solution further satisfies%
\EQ{\label{eq1.7}
\norm{u}_{E^{s,p}_{T,m}} + \one_{q \le s} \norm{u}_{L^s_T E^p_m} \le C\norm{u_0}_{E^3_q},\quad \forall m \in [q,\infty].
}
\end{theorem}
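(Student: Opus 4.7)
The plan is to prove Theorem \ref{thrm:critical} by a Picard iteration in a Kato-type Banach space adapted to Wiener amalgam spaces, then bootstrap to recover the remaining norms and spacetime bounds.

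\emph{Step 1 (Contraction).} Let $\|u\|_{X_T} := \sup_{0<t<T} t^{1/4}\|u(t)\|_{E^6_q}$ and work on the closed ball $\{u : \|u\|_{X_T} \le 2\|e^{t\Delta}u_0\|_{X_T}\}$, which matches the uniqueness class \eqref{eq-uniquesmall}. The heat semigroup estimates in amalgam spaces (a special case of Lemma \ref{lemma:linear.heat.amalgam.spacetime-new}) give $\|e^{t\Delta}u_0\|_{X_T} \lec \|u_0\|_{E^3_q}$. For the bilinear form, the amalgam Hölder inequality \eqref{Holder} (with $p_1=p_2=6$, $q_1=q_2=q$, admissible since $1/q \le 2/q$) yields $\|u\otimes v\|_{E^3_q} \le \|u\|_{E^6_q}\|v\|_{E^6_q}$, while the Oseen tensor bound provides $\|e^{(t-s)\Delta}\mathbb P\nabla\cdot f\|_{E^6_q} \lec (t-s)^{-3/4}\|f\|_{E^3_q}$. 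Together with $\int_0^t(t-s)^{-3/4}s^{-1/2}\,ds = Ct^{-1/4}$, this gives $\|B(u,v)\|_{X_T} \le C\|u\|_{X_T}\|v\|_{X_T}$, and the Banach fixed point theorem produces a unique mild solution once $\varepsilon$ is small.

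\emph{Step 2 (Auxiliary norms and continuity).} Substituting the $X_T$ bound into Duhamel and using $\|e^{(t-s)\Delta}\mathbb P\nabla\cdot\|_{E^3_q\to E^3_q} \lec (t-s)^{-1/2}$ with $E^6\cdot E^6 \subset E^3$ and $\int_0^t(t-s)^{-1/2}s^{-1/2}\,ds = \pi$ yields $\|u\|_{L^\infty_T E^3_q} \lec \|u_0\|_{E^3_q}$. The bound on $t^{1/2}\|u(t)\|_{E^\infty_q}$ is obtained by splitting $B(u,u)$ at $s=t/2$: on $[0,t/2]$ use $E^3_q\to E^\infty_q$ at rate $(t-s)^{-1}$ with $E^6\cdot E^6\subset E^3$; on $[t/2,t]$ use $E^6_q\to E^\infty_q$ at rate $(t-s)^{-3/4}$ with $E^\infty\cdot E^6 \subset E^6$, and close a bootstrap. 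For continuity at $t=0$ with $q<\infty$, Schwartz functions are dense in $E^3_q$, $e^{t\Delta}$ is strongly continuous on this space, and $\|B(u,u)(t)\|_{E^3_q}\to 0$ as $t\to 0^+$, giving $u\in C([0,T); E^3_q)$. For $q=\infty$, global density fails, but $u_0|_{B'}\in L^3(B')$ on every larger ball $B'\supset B$, and approximation by $C_c^\infty(B')$ functions combined with the vanishing of $B(u,u)$ in $L^{3-\delta}(B)$ yields \eqref{convergencetodata}; interior continuity on $(0,T)$ is routine once $u$ is bounded in $E^\infty_q$.

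\emph{Step 3 (Spacetime bounds).} For \eqref{eq1.7}, Lemma \ref{lemma:linear.heat.amalgam.spacetime-new} with $r=3$ directly provides $\|e^{t\Delta}u_0\|_{E^{s,p}_{T,m}} \lec \|u_0\|_{E^3_q}$ on the Kato-Giga curve $2/s + 3/p = 1$ for every $m\in[q,\infty]$ (no extra index constraint is needed when $T<\infty$). For the bilinear part, a purely pointwise-in-$t$ bound would yield only weak-$L^s$, so instead I would couple the $X_T$ bound with the linear spacetime estimate applied to $f=\mathbb P\nabla\cdot(u\otimes u)$: the amalgam Hölder \eqref{Holder} controls $\|u\otimes u(s)\|_{E^3_m}$ by $\|u\|_{X_T}\,s^{-1/4}\|u(s)\|_{E^6_m}$ (after using $E^6_q\subset E^6_m$ and $\|u(s)\|_{E^6_q}\le s^{-1/4}\|u\|_{X_T}$), and the self-improvement yields $\|B(u,u)\|_{E^{s,p}_{T,m}} \lec \|u\|_{X_T}\|u\|_{E^{s,p}_{T,m}} + C(T,s)\|u_0\|_{E^3_q}^2$; smallness of $\varepsilon_0(T,s)$ absorbs the first term on the left. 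The mixed bound $\mathbbm{1}_{q\le s}\|u\|_{L^s_T E^p_m}$ then follows from \eqref{ineq:embedding.parabolic.space}.

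The hardest step is Step 3: obtaining sharp $L^s_t$ control requires either a Marcinkiewicz/weak-type argument or the self-improvement device sketched above, and one must carefully distinguish the $m\le s$ and $m\ge s$ regimes when translating between $L^s_T E^p_m$ and $E^{s,p}_{T,m}$ via \eqref{ineq:embedding.parabolic.space}-\eqref{ineq:embedding.parabolic.space2}, as well as handle the endpoints $s=\infty$ and $p\in\{3,9\}$ where the beta integrals and semigroup bounds become borderline. This is what forces the smallness threshold $\varepsilon_0$ to depend on both $T$ and $s$.
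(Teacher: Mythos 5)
Your Steps 1 and 2 follow essentially the same route as the paper: a Picard contraction in the Kato norm $\sup_t t^{1/4}\norm{u(t)}_{E^6_q}$ (the paper's $\td{\mathcal F}_T$), followed by a bootstrap of the iterates to recover $\norm{u}_{L^\infty_T E^3_q}$ and $\norm{t^{1/2}u}_{L^\infty_T E^\infty_q}$, and the same localization device for \eqref{convergencetodata} when $q=\infty$. Your splitting of the Duhamel integral at $\tau=t/2$ for the $E^\infty_q$ bound is a harmless variant of the paper's $E^\infty\cdot E^6\subset E^6$ estimate. These parts are fine.

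The genuine gap is in Step 3, the spacetime bound \eqref{eq1.7}. The paper obtains it by running the contraction in $\td{\mathcal F}_T\cap E^{s,p}_{T,m}$ from the outset, which requires the bilinear estimate $\norm{B(f,g)}_{E^{s,p}_{T,m}}\lec_T \norm{f}_{E^{s,p}_{T,m}}\norm{g}_{E^{s,p}_{T,m}}$ of Lemma \ref{lemma:duhamel.amalgam} with $\td s=s/2$, $\td p=p/2$, $\si=0$. Proving that lemma is the real work: one must localize to unit cubes, apply the Hardy--Littlewood--Sobolev inequality in time on each cube for the near-field part, and control the far-field part by a discrete convolution against the kernel $(|k'|+\sqrt\tau)^{-4}$, using discrete HLS at the endpoint. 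Your substitute — applying the \emph{linear} spacetime estimate of Lemma \ref{lemma:linear.heat.amalgam.spacetime-new} to $f=\mathbb P\nb\cdot(u\otimes u)$ — does not apply, because that lemma concerns $e^{t\De}a$ for a fixed, time-independent $a$, not the time-convolved Duhamel operator; passing the $L^s_t$ norm inside the $\tau$-integral by Minkowski loses exactly the endpoint scaling you need (this is the same obstruction you correctly identify as "only weak-$L^s$"). The claimed self-improvement inequality $\norm{B(u,u)}_{E^{s,p}_{T,m}}\lec\norm{u}_{X_T}\norm{u}_{E^{s,p}_{T,m}}+C(T,s)\norm{u_0}_{E^3_q}^2$ is therefore asserted rather than derived, and its derivation is precisely the missing HLS-plus-far-field argument. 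Separately, the endpoint $s=\infty$, $p=3$ cannot be reached by this scheme at all (Lemma \ref{lemma:duhamel.amalgam} with $\si=0$ requires $\td s<s<\infty$); the paper needs the dedicated time-weighted estimates of Lemmas \ref{lem-new-linear} and \ref{lem-new-bilinear} in the spaces $E^{\infty,3}_{T,q}$ and $E^{\infty,\infty}_{T,q}$, which your proposal flags as "borderline" but does not supply.
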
 

\emph{Comments on Theorem \ref{thrm:critical}:}
\begin{enumerate}

\item When ${1\le q}\le 3$, this theorem and Theorem \ref{thrm:critical2} extend Kato \cite{Kato}, see also Giga \cite{Giga}.
Note that, if ${1\le q} < 3$, then $E^3_q  \subset (L^3 \cap L^q)$, considered in \cite[Theorems 3\&4]{Kato}. The only spacetime integral estimate in \cite{Kato,Giga} is $L^s L^p$.  

When $q=\infty$, this theorem extends Maekawa-Terasawa \cite[Theorem 1.1 (iii)]{MaTe}. The spacetime integral bound in $E^{s,p}_{T,m=\infty}$ for $s<\infty$ is not considered in \cite{MaTe}.

\item The uniqueness part of Theorem \ref{thrm:critical}, unlike Theorem \ref{thrm.subcritical}, assumes smallness given by the condition \eqref{eq-uniquesmall}. Such a smallness condition for uniqueness is also implicitly assumed in \cite[Theorem 1.1 (iii)]{MaTe} when $q=\infty$.

\item \label{th2.remark.convergence1} We refer to the convergence in \eqref{convergencetodata} as convergence in $L^{3-\delta}_\loc$. For sub-critical data $u_0$ in $L^p_\uloc$, $p>3$, it is shown in \cite{MaTe} that $u\to u_0$ in $L^p_\loc$. When $p=3$, it is shown in \cite{MaTe} that $u\to u_0$ in $L^3_\loc$ provided $u_0$ is in the $L^3_\uloc$ closure of bounded uniformly continuous functions. This complements our result \eqref{convergencetodata} which does not require $u_0$ to be in this closure. 
\end{enumerate}

\begin{theorem}[Critical data II]\label{thrm:critical2}
Let $1\le q\leq 3$. 
For all divergence-free $u_0\in E^3_q$, there exist $T=T(u_0)>0$ and 
a unique mild solution $u$  to \eqref{NS} satisfying
\[
u \in BC([0,T); E^3_q)\quad \text{and}\quad t^{\frac12} u \in L^\I (0,T; E^\oo_{q_2}),
\]
with $1/q_2 = 1/q-1/3$, $q_2 \in [\frac 32,\infty]$. For any 
$s\in [3,\infty)$, $\frac2s + \frac3p = 1$, and $m \in [q,\infty]$,
there is $T_1 \in (0,T]$ such that 
\EQ{\label{eq-thmIII-Ebound}
u \in E^{s,p}_{T_1,m}.
}
Furthermore, there is $\e(q)>0$ such that $T=\oo$ if $\norm{u_0}_{E^3_q} \le \e(q)$.
If we assume further%
\EQ{\label{eq-thmIII-mbound}
m > p' = \frac p{p-1},\quad \text{and}\quad
m \ge m_1,\quad
\frac 2 s +\frac 3 {m_1} = \frac 3 q, 
}
with $m>m_1(s,q)$ when $q=1$,
then there exists $\e_1(s,q,m)>0$ such that $T_1=\oo$ if $\norm{u_0}_{E^3_q} \le \e_1(s,q,m)$.
Instead of \eqref{eq-thmIII-mbound}, %
if we assume
\EQ{
m \ge \max(p',m_1), \quad \text{and}\quad \left\{
\begin{aligned}
m>m_1&\quad \text{if }\quad q=1,\\
m \ge p &\quad \text{if }\quad 3s<5q,
\end{aligned}\right.
}
then there exists $\e_2(s,q,m)>0$ such that $u \in L^s_{T=\infty} E^p_m$ if $\norm{u_0}_{E^3_q} \le \e_2(s,q,m)$.

\end{theorem}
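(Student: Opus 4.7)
The plan is to run a Picard iteration for the mild formulation $u = e^{t\De}u_0 - B(u,u)$ in the Kato-type space
\[
X_T = \bigl\{u:\ \|u\|_{X_T}:=\sup_{0<t<T}\|u(t)\|_{E^3_q} + \sup_{0<t<T} t^{1/2}\|u(t)\|_{E^\infty_{q_2}} < \infty\bigr\},
\]
where $1/q_2 = 1/q - 1/3$ is the global amalgam index dictated by the heat smoothing $E^3_q \to E^\infty_{q_2}$. Both the local improvement ($L^3\to L^\infty$ in a unit cube) and the global improvement by kernel spreading produce the same $t^{-1/2}$ prefactor, so $X_T$ is the natural scale-invariant ambient space. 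Lemma \ref{lemma:linear.heat.amalgam.spacetime-new} supplies $\|e^{t\De}u_0\|_{X_T} \lec \|u_0\|_{E^3_q}$, and since $q<\infty$ throughout the theorem, density of test functions in $E^3_q$ yields $\lim_{t\to 0^+}t^{1/2}\|e^{t\De}u_0\|_{E^\infty_{q_2}} = 0$, so the weighted piece of $\|e^{t\De}u_0\|_{X_T}$ can be made arbitrarily small on a short interval regardless of the size of $u_0$.

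The technical heart is a bilinear estimate $\|B(u,v)\|_{X_T} \le C\|u\|_{X_T}\|v\|_{X_T}$. For $\|B(u,v)(t)\|_{E^3_q}$ I would insert the Oseen kernel, use the amalgam H\"older inequality \eqref{Holder} to split $u\otimes v$ into one $E^3_q$-factor and one $E^\infty_{q_2}$-factor to obtain $\|u(s)\otimes v(s)\|_{E^3_{\tilde q}} \lec s^{-1/2}\|u\|_{X_T}\|v\|_{X_T}$ for a suitable second index $\tilde q$, then apply the linear amalgam heat estimate and carry out a Beta-function time integral that is $T$-independent. The corresponding estimate for $\|B(u,v)(t)\|_{E^\infty_{q_2}}$ uses the same scheme with the output space shifted. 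The standard abstract contraction lemma then produces $u\in X_T$ with $u\in BC([0,T);E^3_q)$; continuity at $t=0$ uses density and smallness of the $t^{1/2}$-weighted piece, and uniqueness in $X_T$ follows from the bilinear bound. Global existence for small $\|u_0\|_{E^3_q}$ is immediate because the bilinear constant is independent of $T$, so smallness of the data alone closes the iteration on $[0,\infty)$.

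For the spacetime integral bound \eqref{eq-thmIII-Ebound}, once the solution is constructed in $X_T$ I would post-process using Lemma \ref{lemma:linear.heat.amalgam.spacetime-new} (linear part, along the Kato parabolic line $2/s+3/p=1$) plus a bilinear $E^{s,p}_{T,m}$-estimate obtained by inserting Oseen, factoring $\|u\otimes u\|$ by \eqref{Holder} as a product of one $X_T$-factor and one $E^{s,p}_{T,m}$-factor, and bounding the resulting time convolution by a Hardy--Littlewood--Sobolev-type step. The admissibility condition $2/s+3/m\le 3/q$ of the linear Lemma is precisely $m\ge m_1$ in \eqref{eq-thmIII-mbound}, and $m>p'$ appears as the HLS admissibility condition for the time convolution; the strict form $m>m_1$ at $q=1$ reflects the boundary case of HLS. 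Going further from $E^{s,p}_{T,m}$ to $L^s_T E^p_m$ with $T=\infty$ requires $m\le s$ by Minkowski, cf.\ \eqref{ineq:embedding.parabolic.space}, and the extra condition $m\ge p$ when $3s<5q$ in the theorem encodes precisely this compatibility with the amalgam output index under the parabolic constraint $2/s+3/p=1$.

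The main obstacle I anticipate is the uniform bookkeeping of amalgam exponents across the full range $q\in[1,3]$, especially at the endpoint $q=1$ where $q_2=3/2$ is minimal and several exponents simultaneously touch the boundary values of the H\"older, HLS, and Minkowski steps. The branching in the theorem's final assertion between $3s<5q$ and $3s\ge 5q$ already signals that no single choice of H\"older split will cover every case, so the clean argument will need a modest case analysis tuned to the position of $q$ in $[1,3]$ and of $s$ relative to $q$.
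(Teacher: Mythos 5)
Your iteration space $X_T$ (the paper's $\mathcal E_T$) cannot by itself deliver the large-data local existence that distinguishes this theorem from Theorem \ref{thrm:critical}: the linear term $\norm{e^{t\De}u_0}_{X_T}$ contains $\sup_{0<t<T}\norm{e^{t\De}u_0}_{E^3_q}\approx \norm{u_0}_{E^3_q}$, which does not shrink as $T\to0$, so the standard contraction with $\norm{B(u,v)}_{X_T}\le C\norm{u}_{X_T}\norm{v}_{X_T}$ forces a smallness condition on the data. You observe that only the weighted piece $t^{1/2}\norm{e^{t\De}u_0}_{E^\infty_{q_2}}$ vanishes, but you do not explain how partial smallness closes the fixed point; and one cannot instead contract in the weighted norm alone, because the bilinear self-estimate into $t^{1/2}E^\infty_{q_2}$ fails: splitting $u\otimes v$ as $E^3_q\cdot E^\infty_{q_2}$ gives a local gain $E^3\to E^\infty$ costing $(t-\tau)^{-1/2}$ on top of the gradient's $(t-\tau)^{-1/2}$, hence the divergent integral $\int_0^t(t-\tau)^{-1}\tau^{-1/2}\,d\tau$, while the split $E^\infty_{q_2}\cdot E^\infty_{q_2}$ gives the divergent $\int_0^t(t-\tau)^{-1/2}\tau^{-1}\,d\tau$. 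This is the classical obstruction in Kato's $L^3$ theory, and it is exactly why the paper introduces the intermediate space $\mathcal F_T$ with norm $\sup_{0<t<T}t^{1/4}\norm{f(t)}_{E^6_{q_1}}$, $1/q_1=1/q-1/6$: there $\norm{e^{t\De}u_0}_{\mathcal F_T}\to0$ as $T\to0$ for arbitrary $u_0\in E^3_q$ by \eqref{vanishing-t=0} (which needs the strict smoothing $3<6$), the bilinear form is bounded on $\mathcal F_T$ with a $T$-independent universal constant (the exponents $(t-\tau)^{-3/4}\tau^{-1/2}$ are integrable), and the $\mathcal E_T$ bounds are then recovered a posteriori via $\norm{B(f,g)}_{\mathcal E_T}\lec\norm{f}_{\mathcal E_T}\norm{g}_{\mathcal F_T}$, exploiting the smallness of $\norm{u^{(n)}}_{\mathcal F_T}$ rather than of $\norm{u_0}_{E^3_q}$. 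Without this auxiliary space (or an equivalent device such as a careful time-splitting combined with a two-norm fixed point scheme) your argument proves only the small-data case.

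Your treatment of the spacetime bounds is closer to the mark: the conditions $m\ge m_1$, $m>p'$, the strictness at $q=1$, and the role of $m\le s$ versus $m\ge p$ for the $L^s_TE^p_m$ conclusion are correctly traced to Lemma \ref{lemma:linear.heat.amalgam.spacetime-new}, the discrete Hardy--Littlewood--Sobolev step, and \eqref{ineq:embedding.parabolic.space}. Note however that the paper does not post-process but re-runs the Picard iteration in $\mathcal E_T\cap E^{s,p}_{T,m}$, using the vanishing of $\norm{e^{t\De}u_0}_{E^{s,p}_{T_1,m}}$ as $T_1\to0$ (which requires $s<\infty$ and $m<\infty$) to locate $T_1$ for large data; a pure post-processing of an already-constructed $u$ would need a separate argument to absorb the bilinear term.
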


\emph{Comments on Theorem \ref{thrm:critical2}:}
\begin{enumerate}

\item
Theorem \ref{thrm:critical2} is limited to $q\in [1,3]$ and improves Theorem \ref{thrm:critical} in two aspects: First, it does not require smallness of the norm $\norm{u_0}_{E^3_q}$ for local existence. Second, it gives  
global existence for small data.
For global existence and for either $\frac 2 s +\frac 3 m < \frac 3 q$ or $\frac 2 s +\frac 3 m = \frac 3 q$, we need $q<m$. Unlike Theorem \ref{thrm:critical}, we require $s<\infty$ in \eqref{eq-thmIII-Ebound}.

\item The inclusion $t^{\frac12} u \in L^\I (0,T; E^\oo_{q_2})$ is worse than  $t^{\frac12} u \in L^\I (0,T; E^\oo_{q})$ in Theorem \ref{thrm:critical} since $q<q_2$.
This relaxed choice of space, possible only when $q\le 3$, allows us to remove the smallness assumption on the initial data for local existence.

\item For global space-time integral estimates, 
the exponent $m$ is always no less than $\max(p',m_1) \in (q,p]$. In particular, $q<m$.
The rectangle $[\frac 1s,\frac 1q] \in (0,\frac 13]\times [\frac 13,1]$ is divided into 3 regions as shown in Figure \ref{fig:m*}:

\begin{figure}[h!] %
\setlength{\unitlength}{1.15mm} %
\noindent
\begin{center}
\begin{picture}(70,50)
\put(1,5){\vector(1,0){66}}  
\put (1,45){\vector(0,1){6}}
\multiput(1,5)(0,4){10}{\line(0,1){2.5}}
\put(1,45){\line(1,0){60}}   
\put (61,5){\line(0,1){40}}
\put(1,25){\line(2,1){40}}
\put(37,5){\line(3,2){24}}
\put(69,5){\makebox(0,0)[c]{\scriptsize $\frac 1s$}}
\put(3,49){\makebox(0,0)[c]{\scriptsize $\frac1q$}}
\put(10,38){\makebox(0,0)[c]{\scriptsize I}}
\put(9,27){\makebox(0,0)[c]{\scriptsize $L_1$}}
\put(32,24){\makebox(0,0)[c]{\scriptsize II}}
\put(40,9){\makebox(0,0)[c]{\scriptsize $L_2$}}
\put(55,11){\makebox(0,0)[c]{\scriptsize III}}
\put(51,46.5){\makebox(0,0)[c]{\scriptsize $L_3$}}

\put(-1,45){\makebox(0,0)[c]{\scriptsize $1$}}
\put(-1,25){\makebox(0,0)[c]{\scriptsize $\frac 23$}}
\put(-1,5){\makebox(0,0)[c]{\scriptsize $\frac 13$}}
\put(1,3){\makebox(0,0)[c]{\scriptsize $0$}}
\put(37,3){\makebox(0,0)[c]{\scriptsize $(\frac 15, \frac 13)$}}
\put(37,5){\makebox(0,0)[c]{\scriptsize $\bullet$}}
\put(41,48){\makebox(0,0)[c]{\scriptsize $(\frac 14, 1)$}}
\put(41,45){\makebox(0,0)[c]{\scriptsize $\bullet$}}
\put(61,3){\makebox(0,0)[c]{\scriptsize $(\frac 13, \frac 13)$}}
\put(61,5){\makebox(0,0)[c]{\scriptsize $\bullet$}}
\put(66,20){\makebox(0,0)[c]{\scriptsize $(\frac 13, \frac 59)$}}
\put(61,21){\makebox(0,0)[c]{\scriptsize $\bullet$}}
\put(66,45){\makebox(0,0)[c]{\scriptsize $(\frac 13, 1)$}}
\put(61,45){\makebox(0,0)[c]{\scriptsize $\bullet$}}
\end{picture}

\end{center}
\caption{Regions of exponents for the lower bound of $m$}
\label{fig:m*}
\end{figure}

They are bordered by two line segments:
\[
L_1:\ \frac 1q= \frac23 + \frac 4{3s}, \qquad
L_2:\ \frac 1q=  \frac 5{3s}.
\]

For $ E^{s,p}_{T=\infty,m}$ estimates, the lower bound of $m$ is $p'$ in region I, and $m_1$ in both regions II and III. Equality is disallowed on region I, line $L_1$ and the line $L_3: \frac14\le \frac 1s \le \frac 13$, $q=1$.

For $L^s E^p_m$ estimates, the lower bound of $m$ is $p'$ in region I, $m_1$ in region II, and $p$ in region III. There is a jump across the line $L_2$. 
Equality is disallowed on line $L_3$ only.

\item The lower bound of $m$ for global $L^s E^p_m$ estimates has a jump across the line $L_2$. It is because our linear $L^s E^p_m$ estimates in Lemma \ref{lemma:linear.heat.amalgam.spacetime-new} are based on  $ E^{s,p}_{T=\infty,m}$ estimates and not optimal. We may hope to decrease the lower bound of $m$ in region III if we could prove Marcinkiewicz interpolation theorem for \emph{subadditive} maps on the Wiener amalgam spaces $E^{p}_q $.
See Remark \ref{rem25} (iii).

\item
For the classical case $q=3$, small $u_0 \in E^3_3 =L^3(\R^3)$, and $T_1=\infty$, 
we have $m_1(s,3)=p$ for $\frac2s + \frac3p = 1$ and 
Theorem \ref{thrm:critical2} gives
$u \in E^{s,p}_{\infty,m} \cap L^s_\infty E^p_m$ if $3< p \le 9$ and $m\ge p$, recovering the classical result of \cite{Kato,Giga}.

\item As soon as $q<3$, we can choose $m=\max(p',m_1)<p$ for the $E^{s,p}_{\infty,m}$ estimate.  
To choose $m<p$ for the $L^s_\infty E^p_m$ estimate, we also need $\frac 1q \ge \frac 5{3s}$.

\end{enumerate} 

We will prove estimates for the solutions of the Stokes equations \eqref{Stokes} in Section \ref{sec.stokes}, and construct mild solutions of Navier-Stokes equations \eqref{NS} in Section \ref{sec.ns}, proving Theorems \ref{thrm.subcritical}--\ref{thrm:critical2}.  

\subsection{Weak solutions in amalgam spaces}

The existence of global local energy weak solutions in Wiener amalgam spaces $E^2_q$, $2 \le q<\infty$, in $\R^3$  was considered by  the first and last author in \cite{BT4}, as a way to bridge (or interpolate) the classical theories in $L^2$ and $L^2_\uloc$. It turns out that the theory can be extended to $E^2_q$ for $1\le q<2$ (extrapolation).  We will study such local energy solutions, in the sense of  \cite[Definition 1.1]{BT4}, in Section \ref{sec:global-weak}.

Smaller $q$ means that the solutions are more spatially localized. An advantage is that we have a sequence of a priori bounds whose time spans go to infinity. Such property is shown in \cite{BT4} for $2\le q<6$. As a result, when we construct global solutions, there is no need to consider perturbed Navier-Stokes equations as in \cite{BT4}. On the other hand, as Young's convolution inequality for sequence
\[
\norm{a * b}_{\ell^r} \le C\norm{a }_{\ell^1} \cdot \norm{b}_{\ell^r} 
\]
is valid only for $r \ge 1$, we need to adjust many estimates of the pressure from far field.

As in \cite{BT4}, we will first show \emph{eventual regularity} for local energy weak solutions with $u_0 \in E^2_q$, $1\le q<2$, in Theorem \ref{bt4-thm-1.3}. 

Moreover, if such a solution has finite ${\bf LE}_q(0,T)$-norm 
\EQ{\label{ETq.def}
\norm{u}_{{\bf LE}_q(0,T)}
:= \norm{u}_{E^{\infty,2}_{T,q}} + \norm{\nb u}_{E^{2,2}_{T,q}},
}
which we refer to as 
$\ell^q$ \emph{local energy}, then it satisfied the a priori bounds in Lemma \ref{lem.A0qbound} for all scales up to time $T$.

Finally, we will prove the following existence theorem.

\begin{theorem}[Existence in $E^2_q$]\label{th4.8}
Assume $u_0\in E^2_q$ where $1\le q<2$ and is divergence free. Then, there exists a time-global local energy solution $u$ and associated pressure $\pi$ to the Navier-Stokes equations \eqref{NS} in $\R^3$  with initial data $u_0$ so that, for any $0<T<\infty$,
\EQ{\label{eq-LEq-global}
\norm{u}_{{\bf LE}_q(0,T)}<\infty.
}
In particular, $u\in L^\infty(0,T; E^2_q)$ and satisfies the a priori bounds in Lemma \ref{lem.A0qbound}. 
\end{theorem}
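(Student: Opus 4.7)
The plan is to follow the classical construction of local energy weak solutions, mirroring the argument in \cite{BT4} for the range $2\le q<\infty$ but adapted to the extrapolation range $1\le q<2$. The first step is to build a suitable sequence of approximate initial data $u_0^{(k)}$ that is divergence free, smoother and better localized than $u_0$, with $u_0^{(k)}\to u_0$ in $E^2_q$. A natural choice is $u_0^{(k)}=\chi_k\,J_{1/k}u_0$ made divergence free via the Helmholtz projection $\mathbb P$, where $J_{1/k}$ is a standard mollifier and $\chi_k$ is a smooth spatial cutoff; since $E^2_q\subset L^2$ for $q<2$ (by \eqref{eq1.4}), we obtain $u_0^{(k)}\in L^2\cap L^\infty$ and control of $\|u_0^{(k)}\|_{E^2_q}$ uniformly in $k$.

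For each $k$ I would invoke a classical existence theorem (e.g.\ Leray--Hopf, or the local energy construction already used in \cite{BT4}) to produce a global suitable weak solution $(u^{(k)},\pi^{(k)})$ with initial datum $u_0^{(k)}$, which automatically enjoys the local energy structure of \cite[Definition 1.1]{BT4}. The central step is then to promote the uniform control of $\|u_0^{(k)}\|_{E^2_q}$ to uniform control of $\|u^{(k)}\|_{{\bf LE}_q(0,T)}$ on arbitrary time intervals. This is exactly what Lemma \ref{lem.A0qbound} is designed to give once its hypotheses (already verified in the excerpt) are in place, and combined with Theorem \ref{bt4-thm-1.3} it also provides the eventual regularity needed to iterate the bound over time windows whose union covers $[0,T]$ for every $T<\infty$. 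Because the a priori scales as $\ell^q$-sums of local energy quantities are uniform in $k$, Fatou's lemma in $\ell^q(\ZZ^3)$ will preserve them under weak/locally strong limits.

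With the uniform ${\bf LE}_q$-bounds in hand, compactness proceeds by an Aubin--Lions argument on each parabolic cylinder $B_R(0)\times(0,T)$: $u^{(k)}$ is bounded in $L^\infty_tL^2_{\loc}\cap L^2_tH^1_{\loc}$, and the Navier--Stokes equation together with the pressure decomposition bounds $\partial_t u^{(k)}$ in a negative-order space, yielding strong $L^2$ compactness locally in space-time. Extracting a subsequence $u^{(k)}\to u$, weak-$*$ convergence against $L^1_{\loc}$ test functions and strong local convergence let one pass to the limit in $u\cdot\nabla u$ and in the energy inequality. The pressure $\pi^{(k)}$ must be split into a local Calder\'on--Zygmund piece and a tail $\pi^{(k)}_{\mathrm{far}}$ built from the far-field Oseen kernel; this tail is where the extrapolation $q<2$ forces the main technical adjustment, since Young's inequality $\|a*b\|_{\ell^r}\lec\|a\|_{\ell^1}\|b\|_{\ell^r}$ requires $r\ge 1$, so the sequential convolution estimates used in \cite{BT4} for $q\ge 2$ must be replayed with $r=q\in[1,2)$ and borderline weights chosen so that the resulting kernel sums are still summable in $\ell^1(\ZZ^3)$. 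Once the far-field pressure is controlled, the limit $(u,\pi)$ satisfies the local energy inequality and inherits \eqref{eq-LEq-global} from lower semicontinuity of the $E^2_q$ and $E^{2,2}_{T,q}$ norms.

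The hard part will be this pressure tail: redoing the decay-in-$|k-k'|$ estimates for the Oseen tensor convolutions so that they close in $\ell^q$ for $q<2$ without the cushion provided by $\ell^2\hookrightarrow\ell^q$ used previously, and verifying that the far-field corrections do not destroy the a priori control of $\|u^{(k)}\|_{{\bf LE}_q(0,T)}$ when they are substituted back into the local energy inequality. Attainment of the initial datum in the weak-$*$ sense on $E^2_q$ is then routine from the equation and the uniform bounds, completing the construction.
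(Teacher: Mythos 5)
Your overall architecture (approximate, obtain uniform ${\bf LE}_q$ bounds from Lemma \ref{lem.A0qbound}, pass to the limit by local compactness with a split pressure) matches the paper's, and your identification of the $\ell^q$ Young-inequality issue for the far-field pressure when $q<2$ is on target. But there is a genuine gap in how you generate the approximating solutions. Lemma \ref{lem.A0qbound} is an \emph{a priori} estimate: its hypothesis \eqref{th2.2-0} requires that the solution already satisfy $\norm{u}_{{\bf LE}_q(0,T_1)}<\infty$ for all $T_1<T_2$ before the bound \eqref{th2.2-1} can be asserted. A Leray--Hopf solution with data $u_0^{(k)}\in L^2\cap L^\infty$ is not known to satisfy this --- the comment after Theorem \ref{th4.8} makes exactly this point, that Leray's solution ``may not satisfy \eqref{eq-LEq-global}'' --- because $\sup_t$ and $\sum_k$ do not commute, and for $q<2$ the space $\ell^{q/2}$ is strictly smaller than $\ell^1$; likewise the local energy construction of \cite{BT4} only supplies ${\bf LE}_q$ control for $q\ge2$. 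So the central step of your plan, applying Lemma \ref{lem.A0qbound} to $u^{(k)}$, is not justified.

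The paper circumvents this by regularizing the \emph{equation} rather than the data: it solves the mollified and localized system \eqref{eq-regularizedNS} by a Picard fixed point directly in ${\bf LE}_q(0,T)$ (Lemmas \ref{KwTs-lem2.4}, \ref{KwTs-lem3.3}, \ref{KwTs-lem3.4}), so the qualitative finiteness \eqref{th2.2-0} is built in; the a priori bound then applies, and since $A_{0,q}(R)\lec\norm{u_0}_{E^2_q}^2$ uniformly in $R$ (Lemma \ref{bt4-lem-2.2}, which is where $q<2$ enters), the admissible time $\la_R R^2$ tends to infinity with $R$, yielding global ${\bf LE}_q$ control of $u^\epsilon$ (Lemma \ref{BT8-lem3.3}) by a time-stepping argument for the \emph{regularized} system, where uniqueness holds. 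Your proposal to ``iterate the bound over time windows'' using eventual regularity from Theorem \ref{bt4-thm-1.3} is both unnecessary in this regime and, if applied to the true equations, would force you back to solving the perturbed system of \cite{BT4} --- precisely the machinery the paper's large-$R$ trick is designed to avoid. If you insist on data-mollification plus classical weak solutions, you would need a separate argument that those solutions lie in ${\bf LE}_q$ locally in time, and that is not routine.
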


\emph{Comments on Theorem \ref{th4.8}:}
\begin{enumerate}

\item  Theorem \ref{th4.8} extends the range of $q\in[2,\infty)$ in \cite[Theorem 1.5]{BT4} to $q\in[1,2)$.
It can also be viewed as a $\ell^q$-version of \cite[Theorem 1.5]{BT8} except the decay condition \cite[(1.12)]{BT8} on initial data is not needed.

\item
The main point of Theorem \ref{th4.8} is the finiteness of estimate \eqref{eq-LEq-global}.
Since $u_0\in E^2_q\subset L^2$, Leray's original global weak solution is already a local energy solution, but it may not satisfy \eqref{eq-LEq-global}.
\end{enumerate}

\medskip

In Section \ref{Sec4.2}, we  discuss the local existence of a local energy solution $u$ of the perturbed Navier-Stokes equations
\eqref{PNS} when the perturbation $v$ is small in $L^\infty(0,T;L^p_\uloc)$, $p=3$. Such a solution $u$ is needed in the construction of time-global local energy solutions of \eqref{NS} with initial data in $E^2_q$, $2\le q<\infty$ in \cite{BT4}. The choice of $p=3$ is natural, but the bound $v \in L^\infty(0,T;L^3_\uloc)$ was insufficient
in \cite{BT4}, due to lack of compactness. It turns out the spacetime integral bound {\eqref{eq1.7}}  in Theorem \ref{thrm:critical} for small mild solution $v\in L^\infty L^3_\uloc$ can be used to show the local energy inequality of $u$.
This was actually a motivation for the present paper, and will be shown in Proposition \ref{corollary}.

 \medskip \noindent \textbf{Organization:} Section \ref{sec.stokes} contains estimates for the solutions of the Stokes equation while Section \ref{sec.ns} contains the proofs of our main results.  Section \ref{Sec4} considers weak solutions. In the appendix Section \ref{sec5}, we give details of the end point case of Giga's estimate \eqref{Giga-est}, and gives examples showing the strict inclusions of various functional spaces.

\section{Linear estimates in Wiener amalgam spaces}\label{sec.stokes}

In this section we consider linear equations in $\R^d$ for general space dimension $d\in \NN$. We first prove decay estimates of $e^{t\De}$, its gradient, and ${\mathbb P}e^{t\De}\nb\cdot$ and limits of $e^{t\De}$
in amalgam spaces in Lemmas \ref{Wa-estimate}--\ref{lemma:heat.eq.convergence.data}. We then show spacetime integral estimates for $e^{t\De}$ and the Duhamel term in Lemmas \ref{lemma:linear.heat.amalgam.spacetime-new} and \ref{lemma:duhamel.amalgam}-\ref{lem-bilinear-LsEpm} ($d=3$), and provide examples in Example \ref{example2}.
At the end of this section, we derive $\ell^q$ local energy estimates of $e^{t\De}$ and ${\mathbb P}e^{t\De}\nb\cdot$ for $d=3$ in Lemma \ref{KwTs-lem2.4}.

Let $S_{ij}(x,t)$ denote the Oseen tensor in $\R^d$, $d\ge2$, the fundamental solution of the Stokes system \eqref{Stokes} in $\R^d$, found by Oseen \cite{Oseen} for $d=3$. We have the following pointwise estimate by Solonnikov \cite{Solonnikov},
\EQ{\label{Oseen.est}
|\pd_t^m\nb_x^k S(x,t)|\le \frac{C_{k,m}}{(|x|+\sqrt t)^{d+k+2m}}.
}

\subsection{Decay estimates and continuity of heat semigroup}

\begin{lemma}\label{Wa-estimate}
Let {$d\in \NN$,} $1 \le \tilde{p} \le p \le \infty$, $1 \le \tilde{q} \le q \leq \infty$. Then for any $f\in E^{\tilde{p}}_{\tilde{q}}(\R^d)$, we have for $h=0,1$ 
\EQ{\label{lemma:linear.stokes.amalgam} 
\norm{\nb^h e^{t\De} f}_{E^p_q} 
\lec\bke{ \frac1{t^{\frac{d}2\bke{\frac1{\tilde{p}} - \frac1p} + \frac{h}2 } } + \frac {\one_{t>1}}{t^{\frac{d}2\bke{\frac1{\tilde{q}} - \frac1q} + \frac{h}2}  }  } \norm{f}_{E^{\tilde{p}}_{\tilde{q}}}.
}
For $d \ge 2$ and $F\in (E^{\tilde{p}}_{\tilde{q}})^{d\times d}$, we have
\EQ{\label{Wa-estimate-bilinear}
\norm{ \int_{\R^d}  \pd_l S_{ij}(x-y,t) F_{lj}(y)\, dy }_{E^p_q} 
\lec  \bke{  \frac1{t^{\frac{d}2\bke{\frac1{\tilde{p}} - \frac1p} + \frac12} }  + \frac {\one_{t>1}}{t^{\frac{d}2\bke{\frac1{\tilde{q}} - \frac1q} + \frac12}  } 
}\norm{F}_{E^{\tilde{p}}_{\tilde{q}}}.
}

When $\frac1{\tilde{p}} - \frac1p \le \frac1{\tilde{q}} - \frac1q$, \eqref{lemma:linear.stokes.amalgam}  and \eqref{Wa-estimate-bilinear} reduce to
\EQ{\label{lemma:linear.stokes.amalgam-2} 
\norm{e^{t\De} f}_{E^p_q} 
\lec \frac1{t^{\frac{d}2\bke{\frac1{\tilde{p}} - \frac1p}} }  \norm{f}_{E^{\tilde{p}}_{\tilde{q}}},
}
and
\EQ{\label{Wa-estimate-bilinear-2}
\norm{ \int_{\R^d}  \pd_l S_{ij}(x-y,t) F_{lj}(y)\, dy }_{E^p_q} 
\lec  \frac1{t^{\frac{d}2\bke{\frac1{\tilde{p}} - \frac1p} + \frac12} } \norm{F}_{E^{\tilde{p}}_{\tilde{q}}}.
}
\end{lemma}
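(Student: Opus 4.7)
My plan is to prove both estimates in parallel by localizing on a unit-cube partition of $\R^d$, combining cellwise Young's inequality on $\R^d$ with Young's inequality on $\ZZ^d$ for the sequence of cell norms, and exploiting the Gaussian (resp. algebraic) decay of the heat kernel $G_t$ (resp. the Oseen kernel via \eqref{Oseen.est}) at the parabolic scale $\sqrt t$. Partition $\R^d = \bigsqcup_{j \in \ZZ^d} Q_j$ into unit cubes and write $f = \sum_j f_j$ with $f_j = f \mathbf{1}_{Q_j}$; note $\|(\|f_j\|_{L^{\tilde p}})_j\|_{\ell^{\tilde q}} \sim \|f\|_{E^{\tilde p}_{\tilde q}}$ up to bounded-overlap constants. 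For each $k\in\ZZ^d$ I bound
\[
\|\nabla^h e^{t\Delta}f\|_{L^p(B_1(k))} \le \sum_j \|\nabla^h G_t * f_j\|_{L^p(B_1(k))} \le \sum_j b_{k-j}(t)\|f_j\|_{L^{\tilde p}},
\]
for an appropriate kernel $b_m(t)$ described below. Applying Young's inequality for sequence convolution, $\ell^r(\ZZ^d) \ast \ell^{\tilde q}(\ZZ^d) \hookrightarrow \ell^q(\ZZ^d)$ with $\tfrac{1}{r} = 1 - (\tfrac{1}{\tilde q} - \tfrac{1}{q}) \in [0,1]$ (valid precisely because $\tilde q \le q$), I obtain
\[
\|\nabla^h e^{t\Delta}f\|_{E^p_q} \lec \|b(t)\|_{\ell^r(\ZZ^d)}\, \|f\|_{E^{\tilde p}_{\tilde q}}.
\]

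The kernel $b_m(t)$ admits two complementary bounds. First, Young's inequality on $\R^d$ with exponents $1+\tfrac{1}{p} = \tfrac{1}{r_*}+\tfrac{1}{\tilde p}$ and the standard scaling $\|\nabla^h G_t\|_{L^{r_*}(\R^d)} \sim t^{-\frac d2(\frac1{\tilde p}-\frac1p)-\frac h2}$ yields $b_m(t) \lesssim t^{-\frac d2(\frac1{\tilde p}-\frac1p)-\frac h2}$ uniformly in $m$. Second, the pointwise Gaussian bound $|\nabla^h G_t(x-y)| \lesssim t^{-(d+h)/2} e^{-c|x-y|^2/t}$ with H\"older on $Q_j$ gives $b_m(t) \lesssim t^{-(d+h)/2} e^{-c|m|^2/t}$ when $|m|\ge C(1+\sqrt t)$, and the sharper $b_m(t) \lesssim t^{-(d+h)/2}$ when $t\ge 1$ and $|m|\le C\sqrt t$.

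For $t\le 1$ only the Young bound is needed: $b(t)$ is effectively supported on $O(1)$ near cells with amplitude $\sim t^{-\frac d2(\frac1{\tilde p}-\frac1p)-\frac h2}$, so $\|b(t)\|_{\ell^r} \lesssim t^{-\frac d2(\frac1{\tilde p}-\frac1p)-\frac h2}$, producing the first term. For $t>1$, the sharp near-field amplitude $\sim t^{-(d+h)/2}$ on the $\sim t^{d/2}$ cells $|m|\le C\sqrt t$ together with the Gaussian tail give $\|b(t)\|_{\ell^r}^r \sim t^{d/2 - r(d+h)/2}$, hence $\|b(t)\|_{\ell^r} \lesssim t^{-\frac d2(1-\frac 1r)-\frac h2} = t^{-\frac d2(\frac1{\tilde q}-\frac1q)-\frac h2}$, the second term. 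For the bilinear estimate \eqref{Wa-estimate-bilinear}, \eqref{Oseen.est} provides $|\nabla S(x,t)| \lesssim (|x|+\sqrt t)^{-d-1}$ with the same amplitude and decay profile as $|\nabla G_t|$, so the identical scheme applies with $h=1$. The reduced forms \eqref{lemma:linear.stokes.amalgam-2}--\eqref{Wa-estimate-bilinear-2} then follow since under $\frac{1}{\tilde p} - \frac{1}{p} \le \frac{1}{\tilde q} - \frac{1}{q}$ the first term dominates the second for $t>1$. The main technical care goes into the long-time $\ell^r$ computation, where one must correctly balance the amplitude $t^{-(d+h)/2}$, the count $\sim t^{d/2}$ of near cells, and the Gaussian tail; the endpoint cases $\tilde p, p, \tilde q, q \in \{1,\infty\}$ (where Young exponents degenerate) are dispatched by the standard limiting forms of Young's inequality.
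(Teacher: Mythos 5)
Your proposal is correct and is essentially the paper's own argument: the paper's first proof performs the same near/far cell decomposition, using $L^{\tilde p}$--$L^p$ smoothing for the diagonal cells and the pointwise Gaussian (resp.\ Oseen) decay plus discrete Young's inequality with $\frac1q+1=\frac1r+\frac1{\tilde q}$ for the off-diagonal cells, with the same small-$t$/large-$t$ dichotomy for $\|b(t)\|_{\ell^r}$. Your single-kernel packaging $\norm{\nb^h e^{t\De}f}_{E^p_q}\lec\|b(t)\|_{\ell^r}\|f\|_{E^{\tilde p}_{\tilde q}}$ is also exactly the paper's second proof (the Busby--Smith amalgam convolution inequality combined with the computation of $\|\nb^h\Gamma(\cdot,t)\|_{E^{p_0}_{q_0}}$ in Lemma \ref{lemma:ForBusbySmith1}) written out inline.
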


The above lemma recovers the known cases of $L^{\td p}$-$L^p$ estimates \cite{FJR}  when $q=p$ and $\td q = \td p$ as well as $L^{\td p}_\uloc$-$L^{p}_\uloc$ estimates when $q=\td q = \I$ 
(\eqref{lemma:linear.stokes.amalgam} by \cite{Arriera} and \eqref{lemma:linear.stokes.amalgam}-\eqref{Wa-estimate-bilinear} by
\cite{MaTe}).  Estimates \eqref{lemma:linear.stokes.amalgam-2} and \eqref{Wa-estimate-bilinear-2}
are similar to the usual $L^{\td p}$-$L^p$ estimates, and are convenient for our applications in Section \ref{sec.ns}.
Note that $\td q\le q$ since we cannot improve decay, and $\td p \le p$ means we can improve regularity. We can allow $\td p > p$ by the imbedding $E^{\tilde{p}}_{\tilde{q}}\subset E^{p}_{\tilde{q}}$. In that case we replace $\bke{\frac1{\tilde{p}} - \frac1p}$ by $\bke{\frac1{\tilde{p}} - \frac1p}_+$. We will give two proofs of Lemma \ref{Wa-estimate}. {We do not use the case $h=1$ in \eqref{lemma:linear.stokes.amalgam} or later in \eqref{ineq:ForBusbySmith1}, but include it because its proofs are carried out simultaneously to those for $h=0$.}

\begin{proof}[First proof of Lemma \ref{Wa-estimate}]
For $x \in B_1(k)$, $k \in \ZZ^d$, decompose $\nb^h e^{t\De} f = f_1^k + f_2^k$ where $f_1^k = \nb^h e^{t\Delta} ( f\chi_{B_4(k)})$ and $f_2^k =\nb^h e^{t\Delta} (f (1-\chi_{B_4(k)}))$.
 By Minkowski's inequality
\[
\|\nb^h e^{t\Delta }f \|_{E^p_q}\leq \big\|  \| f_1^k\|_{L^p(B_1(k))}\big\|_{\ell^q} + \big\|  \| f_2^k\|_{L^p(B_1(k))}\big\|_{\ell^q}.
\]
We have by standard $L^{\td p}$-$L^p$ estimates that
\EQN{
\big\|  \| f_1^k\|_{L^p(B_1(k))}\big\|_{\ell^q} (t)&\lec t^{-\frac{h}2 - \frac d 2 (\frac 1 {\td p} -\frac 1 p)}\big\|  \| f\|_{L^{\td p}(B_4(k))}\big\|_{\ell^q}\lec t^{-\frac{h}2 - \frac d 2 (\frac 1 {\td p} -\frac 1p)} \|f\|_{E^{\td p}_{q}}\lec  t^{-\frac{h}2 - \frac d 2 (\frac 1 {\td p} -\frac 1p)} \|f\|_{E^{\td p}_{\td q}},
}
using the embedding $\ell^{\td q}\subset \ell^q$.
On the other hand, for $x\in B_1(k)$ we have 
\EQN{
|f_2^k|(x,t)\lec \sum_{|k'|\geq 1} \frac 1 {t^{\frac d2 + h}} |k'|^h e^{-|k'|^2/(4t)}  |B_1|^{1-\frac 1 {\td p}} \| f\|_{L^{\td p } (B_1(k'-k))},
}
and, consequently, 
\EQN{
\| f_2^k\|_{L^p(B_1(k))}(t)\lec |B_1|^{1-\frac 1 {\td p} {+\frac 1 p}} \sum_{|k'|\geq 1} \frac 1 {t^{\frac d2 + h}} |k'|^h e^{-|k'|^2/(4t)} \| f\|_{L^{\td p } (B_1(k'-k))}.
}
For small $t\le 1$, applying the $\ell^q$ norm and using Young's inequality on the discrete convolution leads to
\EQN{
\big\|\| f_2^k\|_{L^p(B_1(k))} \big\|_{\ell^q}(t) &\leq  |B_1|^{1-\frac 1 {\td p}{+\frac 1 p}} \norm{ t^{-\frac d2 - h} |k'|^h e^{-|k'|^2/(4t)}\chi_{k'\neq 0} }_{\ell^{1}} \| f\|_{E^{\td p}_{ q}} 
\lec t^{-\frac{h}2}\|f\|_{E^{\td p}_{\td q}},
}
where we've used the embedding $\ell^{\td q}\subset \ell^q$. For large $t>1$, we get
\EQN{
\big\|\| f_2^k\|_{L^p(B_1(k))} \big\|_{\ell^q}(t) &\leq  |B_1|^{1-\frac 1 {\td p}+\frac 1 p} \| t^{-\frac d2 - h} |k'|^h e^{-|k'|^2/(4t)}\chi_{k'\neq 0} \|_{\ell^{r}} \| f\|_{E^{\td p}_{\td q}},\quad \frac1q + 1 = \frac1r + \frac1{\td q},\\
&\lec t^{-\frac{h}2 - \frac{d}2\bke{\frac1{\td q}-\frac1q}} \|f\|_{E^{\td p}_{\td q}}.
}
This proves \eqref{lemma:linear.stokes.amalgam}.

\bigskip The proof of \eqref{Wa-estimate-bilinear} is logically similar except we set $F_1^k = \int_{\R^d}  \pd_l S_{ij}(x-y,t) (F_{lj}\chi_{B_4(k)})(y)\, dy$ and $F_2^k = \int_{\R^d}  \pd_l S_{ij}(x-y,t) (F_{lj}(1-\chi_{B_4(k)}))(y)\, dy$. Using Young's convolution inequality,
\[
\norm{F_1^k}_{L^p(B_1(k))} 
\le \norm{F_1^k}_{L^p(\R^d)}
\lec \norm{\pd_l S_{ij} }_{L^r(\R^d)} \norm{F\chi_{B_4(k)}}_{L^{\td p}},\quad \frac1r+\frac1{\td p} = \frac1p + 1.
\]
Then, by Oseen tensor estimate \eqref{Oseen.est}, we have
\[
\norm{\pd_l S_{ij} }_{L^r(\R^d)} \lec t^{-\frac12 + \frac{d}2 \bke{\frac1p - \frac1{\td p}} }.
\]
So 
\EQN{
\norm{F_1^k}_{L^p(B_1(k))} 
&\lec t^{-\frac12 + \frac{d}2 \bke{\frac1p - \frac1{\td p}} } \norm{F}_{L^{\td p}(B_4(k))}.
}
Taking $\ell^q$-norm over $k\in\ZZ^d$ on both sides and using the embedding $\ell^{\td q}\subset\ell^q$ for $\td q\le q$, we derive
\EQN{
\norm{ \norm{F_1^k}_{L^p(B_1(k))} }_{\ell^q}(t)
&\lec t^{-\frac12 + \frac{d}2 \bke{\frac1p - \frac1{\td p}} } \norm{F}_{E^{{\td p}}_q}
\lec t^{-\frac12 + \frac{d}2 \bke{\frac1p - \frac1{\td p}} } \norm{F}_{E^{{\td p}}_{\td q}}.
}

\medskip

For %
$F_2^k$, we carry out the same estimates as for $f_2^k$ but replace $t^{-d/2}e^{-|k'|^2/(4t)}\chi_{k'\neq 0} $ by  $(|k'|+\sqrt t)^{-(d+1)} \chi_{k'\neq 0} $. For small $t$, since 
\[
\|(|k'|+\sqrt t)^{-(d+1)} \chi_{k'\neq 0} \|_{\ell^{1}} \lec t^{-1/2},
\]
we obtain
\[
\big\|\| F_2^k\|_{L^p(B_1(k))} \big\|_{\ell^q}(t) \lec t^{-\frac 1 2 }  \| F\|_{E^{\td p}_{q}}\lec t^{-\frac 1 2 }  \| F\|_{E^{\td p}_{\td q}}.
\]
For large $t$, we instead use
\EQN{
\|(|k'|+\sqrt t)^{-(d+1)} \chi_{k'\neq 0} \|_{\ell^r} &\leq t^{\frac d {2r} -\frac d 2 -\frac 1 2},\quad \frac1q+1=\frac1r+\frac1{\td q},\\
&= t^{-\frac 1 2 - \frac d 2 (\frac 1 {\td q} -\frac 1 {q})},
}
and obtain
\[
\big\|\| F_2^k\|_{L^p(B_1(k))} \big\|_{\ell^q}(t) \leq t^{-\frac 1 2 - \frac d 2 (\frac 1 {\td q} -\frac 1 {q})}  \| F\|_{E^{\td p}_{\td q}}.
\]
Combining these estimates for $F_1^k$ and $F_2^k$ leads to the desired bound \eqref{Wa-estimate-bilinear}.
\end{proof}

It is worth noting that Lemma \ref{Wa-estimate} can also be proved by using a Young-type convolution inequality for amalgam spaces from Busby and Smith \cite{BS}. We include the details to paint a complete picture of the available tools in the amalgam spaces.
The aforementioned inequality, \cite[Theorem 4.2]{BS}, states: For $1\le p_1,p_2,q_1,q_2\le\infty$ with $1/p_2+1/q_2\ge1$, 
\EQ{
1/r_1 = \max\{0,1/p_1 + 1/q_1 - 1\}\quad \text{ and }\quad
1/r_2 = 1/p_2 + 1/q_2 - 1,
}
we have
\EQ{\label{ineq:BusbySmith}
\norm{f * g}_{E^{r_1}_{r_2}} 
\lec \norm{f}_{E^{p_1}_{p_2}} \norm{g}_{E^{q_1}_{q_2}}.
}
To use this inequality, we first establish an elementary lemma.

\begin{lemma}\label{lemma:ForBusbySmith1}
Let $d \in \NN$ and $p,q \in [1,\oo]$.
For the heat kernel  $\Gamma(x,t)=(4\pi t)^{-d/2} e^{-x^2/4t}$ in $\R^d$,  we have for ${h}=0,1$
\EQ{\label{ineq:ForBusbySmith1}
\norm{\nb^h\Gamma(\cdot,t)}_{E^{p}_{q}} 
\le C  t^{-\frac{h}2} \bke{ t^{- \frac d2 + \frac d{2p}} + 
\one_{t>1} t^{-\frac{d}2 + \frac{d}{2q}} }, \quad (0<t<\oo).
}
For $\Phi(x,t)=(|x|+\sqrt t)^{-d-1}$ defined for $(x,t)\in \R^d \times \R_+$, we have 
\EQ{\label{ineq:ForBusbySmith2}
\norm{\Phi(\cdot,t)}_{E^{p}_{q}} 
\le C t^{-\frac 12} \bke{ t^{-\frac d2 + \frac d{2p}} +  
\one_{t>1} t^{-\frac{d}2 + \frac{d}{2q}} }, \quad (0<t<\oo).
}
\end{lemma}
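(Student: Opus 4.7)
The plan is to reduce both bounds to elementary case-splits based on whether $t\le 1$ or $t>1$, exploiting that the natural localization scale of both kernels is $\sqrt{t}$. In both cases, I would set $g_k(t) := \|K(\cdot,t)\|_{L^p(B_1(k))}$ (with $K$ the appropriate kernel) and estimate $\|g_k\|_{\ell^q(\ZZ^d)}$.

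For the Gaussian, start from the classical pointwise bound $|\nb^h\Gamma(x,t)| \lec t^{-(d+h)/2} e^{-c|x|^2/t}$ ($h=0,1$). When $t\le 1$, the ball $B_1(0)$ contributes $g_0 \lec \|\nb^h\Gamma(\cdot,t)\|_{L^p(\R^d)} \lec t^{-h/2-d/2+d/(2p)}$ by the standard heat-kernel computation, while for $|k|\ge 2$ the uniform estimate $\sup_{B_1(k)}|\nb^h\Gamma| \lec t^{-(d+h)/2} e^{-c|k|^2/t}$ makes $\|g_k\|_{\ell^q(|k|\ge 2)}$ exponentially small in $1/t$ and hence absorbable into the preceding term. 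When $t>1$, every $B_1(k)$ sees a nearly-constant kernel, so $g_k(t) \lec t^{-(d+h)/2} e^{-c|k|^2/t}$, and
\[
\|g_k\|_{\ell^q} \lec t^{-(d+h)/2}\Big(\sum_{k\in\ZZ^d} e^{-cq|k|^2/t}\Big)^{1/q} \lec t^{-(d+h)/2}\cdot t^{d/(2q)},
\]
since the $k$-sum is a Riemann sum for $\int_{\R^d} e^{-cq|x|^2/t}\,dx \sim t^{d/2}$. This produces the second term in \eqref{ineq:ForBusbySmith1}.

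For $\Phi(x,t)=(|x|+\sqrt t)^{-d-1}$ the structure is parallel, only using algebraic tails. For $t\le 1$, I would estimate the near contribution $\|\Phi(\cdot,t)\|_{L^p(B_3)}$ by the rescaling $r=\sqrt t\, s$:
\[
\|\Phi(\cdot,t)\|_{L^p(B_3)}^p \lec t^{d/2-(d+1)p/2}\int_0^\infty \frac{s^{d-1}\, ds}{(s+1)^{(d+1)p}} \lec t^{d/2-(d+1)p/2},
\]
the $s$-integral converging because $(d+1)p > d$ for $p\ge 1$; the tail $\sum_{|k|\ge 2}|k|^{-(d+1)q}= O(1)$ is absorbed into $t^{-1/2}$. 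For $t>1$, split the lattice at $|k|\sim \sqrt t$: balls with $|k|\lec \sqrt t$ satisfy $g_k \lec t^{-(d+1)/2}$ and number $\sim t^{d/2}$, while balls with $|k|\gec \sqrt t$ satisfy $g_k \lec |k|^{-(d+1)}$ and $\sum_{|k|\gec \sqrt t}|k|^{-(d+1)q}\lec t^{d/2-(d+1)q/2}$ by comparison with an integral (again using $(d+1)q>d$). Both contributions give $\|g_k\|_{\ell^q}^q \lec t^{d/2-(d+1)q/2}$, yielding $\lec t^{-1/2-d/2+d/(2q)}$ after a $q$-th root.

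The main technical point, though not really an obstacle, is the bookkeeping near the extremes $p=\infty$ or $q=\infty$, where integrals/sums should be replaced by suprema and the bounds collapse consistently to the stated form; and verifying that the integrability thresholds $(d+1)p>d$ and $(d+1)q>d$ hold automatically for $p,q\in[1,\infty]$.
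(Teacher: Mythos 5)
Your proof is correct and follows essentially the same route as the paper's: pointwise kernel bounds, a near/far split over the unit lattice, and comparison of the lattice sums with the corresponding integrals (the paper organizes this as two global bounds---an $E^p_1$ estimate pushed through $\ell^1\subset\ell^q$ plus a direct $E^p_q$ computation---rather than your explicit $t\le 1$ versus $t>1$ case split, but the content is the same). The only bookkeeping point to tidy is the finitely many cells with $0<|k|<2$ omitted between your ``origin ball'' and ``$|k|\ge 2$'' regimes; they are handled by the same global $L^p$ bound, as in the paper's use of $B_4(0)$ versus $|k|\ge 3$.
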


\begin{proof}
First, we have
\[
\norm{\nb^h \Ga(\cdot,t)}_{E^{p}_{1}} 
\lec
 \sum_{k\in \ZZ^d}\bke{\int_{B_1(k)} \abs{t^{-\frac d2-h} |x|^h e^{-\frac{|x|^2}{4t}}}^{p}\, dx}^{1/p}.
 \]
Note that if $|k|\geq 3$ and $x\in B_1(k)$, then $|x| \in [\frac 23|k|,\frac 43|k|]$.
Hence,
\EQN{ %
\norm{\nb^h\Gamma(\cdot,t)}_{E^{p}_{1}} 
&\lec
\bke{\int_{B_4(0)} |\nb^h \Ga(x,t)|^{p}\, dx}^{1/p}+
 \sum_{|k|\ge 3} \bke{\int_{B_1(k)} \abs{t^{-\frac d2-h} |k|^h e^{-\frac{|k|^2}{20t}}}^{p}\, dx}^{1/p}
\\&\lec
\bke{\int_{\R^d} |\nb^h \Ga(x,t)|^{p}\, dx}^{1/p}+
 \sum_{|k|\ge 3} t^{-\frac d2-h} |k|^h e^{-\frac{|k|^2}{20t}}
 \\&
\lec  t^{-\frac{h}2 -\frac{d}2+ \frac d{2p}} + t^{-\frac d2 - h} \int_{\R^d} |y|^h e^{-\frac{|y|^2}{20t}}\, dy
\lec t^{-\frac{h}2-\frac{d}2+ \frac d{2p}}+t^{-\frac{h}2}.
}
The embedding $\ell^1 \subset \ell^{q}$ implies 
\[
\norm{\nb^h \Gamma(\cdot,t)}_{E^{p}_{q}} 
\le C \bke{ t^{-\frac{h}2 -\frac d2 + \frac d{2p}} + t^{-\frac{h}2}}.
\]
On the other hand, 
\EQN{
\norm{\nb^h \Ga(\cdot, t)}_{E^p_q} 
&\lec \bkt{\bke{\int_{B_4(0)} |\nb^h \Ga(x,t)|^p\, dx}^{q/p} + \sum_{|k|\ge 3} \bke{\int_{B_1(k)} \abs{t^{-\frac d2 - h} |k|^h e^{\frac{-|k|^2}{20t}} }^p dx}^{q/p} }^{1/q}\\
&\lec \bkt{\bke{\int_{\R^d} |\nb^h \Ga(x,t)|^p\, dx}^{q/p} + \sum_{|k|\ge 3} t^{-(\frac{d}2 + h)q} |k|^{hq} e^{-\frac{|k|^2}{20t} q} }^{1/q}\\
&\lec \bkt{ t^{\bke{-\frac{h}2 -\frac{d}2 + \frac{d}{2p} }q } + t^{-(\frac{d}2 + h) q} \int_{\R^d} |y|^{hq} e^{-\frac{|y|^2}{20t} q}\, dy }^{1/q}\\
&\lec \bkt{ t^{\bke{-\frac{h}2 -\frac{d}2 + \frac{d}{2p} }q } + t^{-(\frac{d}2 + h) q + \frac{d}2 + \frac{hq}2} }^{1/q}
\sim t^{-\frac{h}2 -\frac d2 + \frac d{2p}} + t^{-\frac{h}2 - \frac d2 + \frac d{2q}}.
}
This proves \eqref{ineq:ForBusbySmith1}.
Similarly,
\EQN{ %
\norm{\Phi(\cdot,t)}_{E^{p}_{1}} 
&\lec
\bke{\int_{B_4(0)} \Phi(x,t)^{p}\, dx}^{1/p}+
 \sum_{|k|\ge 3} \bke{\int_{B_1(k)}  \Phi(k,t)^{p}\, dx}^{1/p}
 \\
&\lec
\bke{\int_{\R^d} \Phi(x,t)^{p}\, dx}^{1/p}+
 \sum_{|k|\ge 3} \Phi(k,t)
 \\
&
\lec  t^{-\frac{d}2+ \frac d{2p}-\frac 12}+ \int_{\R^d} \Phi(x,t)\, dx
\lec t^{-\frac{d}2+ \frac d{2p}-\frac12}+t^{-\frac12}.
}
Also,
\EQN{
\norm{\Phi(\cdot, t)}_{E^p_q} 
&\lec \bkt{\bke{\int_{B_4(0)} \Phi(x,t)^p\, dx}^{q/p} + \sum_{|k|\ge 3} \bke{\int_{B_1(k)} \Phi(x,t)^p dx}^{q/p} }^{1/q}\\
&\lec \bkt{\bke{\int_{\R^d} \Phi(x,t)^p\, dx}^{q/p} + \sum_{|k|\ge 3} \Phi(k,t)^q }^{1/q}\\
&\lec \bkt{ t^{\bke{-\frac{d}2+ \frac d{2p}-\frac 12}q} + \int_{\R^d} \Phi(x,t)^q\, dx }^{1/q}
\lec t^{-\frac{d}2+ \frac d{2p}-\frac12} + t^{-\frac{d}2+ \frac d{2q}-\frac12 }.
}
This proves \eqref{ineq:ForBusbySmith2} and completes the proof of Lemma \ref{lemma:ForBusbySmith1}.
\end{proof}
\emph{Remark.} Estimate \eqref{ineq:ForBusbySmith1} for $h=1$ also follows from \eqref{ineq:ForBusbySmith2} since $|\nb \Ga|\lec \Phi$.

\begin{proof}[Second proof of Lemma \ref{Wa-estimate}]
Define $p_0$ and $q_0$ by
\[
\frac 1 p =   \frac 1 {p_0}+\frac 1 {\td p} -1  \text{ and }  \frac 1 q  = \frac 1 {q_0} +\frac 1 {\td q} -1, 
\]
which implies $\frac 1 {q_0} + \frac 1 {\td q} \geq 1$.
We have 
\Eq{ 
\norm{\nb^h e^{t\De} f}_{E^p_q} 
\lec   \| \nb^h \Ga\|_{E^{p_0}_{q_0}} \|f\|_{E^{\td p}_{\td q}}
\lec \frac1{t^{\frac{h}2}}\bke{ \frac1{t^{\frac{d}2\bke{\frac1{\tilde{p}} - \frac1p}} }+
{\frac {\one_{t>1}}{t^{\frac{d}2\bke{\frac1{\tilde{q}} - \frac1q}}  } } } \norm{f}_{E^{\tilde{p}}_{\tilde{q}}},
}
where we first use Busby and Smith's inequality \eqref{ineq:BusbySmith}, and then \eqref{ineq:ForBusbySmith1} of Lemma \ref{lemma:ForBusbySmith1}. This proves \eqref{lemma:linear.stokes.amalgam}. To prove \eqref{Wa-estimate-bilinear}, we replace $ \| \Ga\|_{E^{p_0}_{q_0}}$ above by  $\| \nb S\|_{E^{p_0}_{q_0}}$, use the upper bound on first spatial derivatives of the Oseen tensor \eqref{Oseen.est}, and use \eqref{ineq:ForBusbySmith2} of Lemma \ref{lemma:ForBusbySmith1}.
\end{proof}

When $p,q<\I$, the spaces $E^p_q$ preserve the good convergence properties of $e^{t\Delta}u_0$ to $u_0$ as $t\to 0$, as the next lemma shows.   If $p=\I$ or $q=\I$ the lemma fails. For $q=\I$ this failure is detailed in \cite{MaTe}.%

\begin{lemma}
[Continuity and vanishing of $e^{t\De}f$ in $E^p_q$]
\label{lemma:heat.eq.convergence.data}
Assume $f\in E^{p}_q=E^{p}_q(\R^d)$, $d\in \NN$, $1\leq p < \I$ and $1\leq q<\I$. Then 
\EQ{\label{eq2.4}
\lim_{|\tau|\to 0} \|  f(\cdot + \tau ) - f(\cdot )\|_{E^p_q}   = 0,
}
and
\EQ{\label{eq2.5}
\lim_{t\to 0_+} \|e^{t\Delta}f - f\|_{E^p_q} = 0.
}
Moreover, for $t>0$ and $p,q\in [1,\infty]$, \textup{(}including $q=\I$ or $p=\I)$, we have 
\EQ{\label{eq2.12}
\lim_{h\to 0} \|e^{(t+h)\De} f  -e^{t\De}  f\|_{E^p_q} = 0,
}
with no restriction on the sign of $h$.

Moreover, if $1 \le \tilde{p} < p \le \infty$ (with strict inequality) and $1 \le \tilde{q} \le q \leq \infty$, 
then for any $f\in E^{\tilde{p}}_{\tilde{q}}(\R^d)$, and $f\in E^{\td p}$ if $\td q = q=\oo$,
we have
\EQ{\label{vanishing-t=0}
\lim _{t\to0_+} t^{\frac{d}2\bke{\frac1{\tilde{p}} - \frac1p}} \norm{e^{t\De} f}_{E^p_q}  = 0.
}
\end{lemma}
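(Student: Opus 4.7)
All four assertions reduce to a density argument using $C_c^\infty(\R^d)$ together with Lemma~\ref{Wa-estimate}, with one exception: \eqref{eq2.12} is handled via the Young-type inequality \eqref{ineq:BusbySmith} since density in $E^p_q$ fails at $p=\infty$ or $q=\infty$. I first record two preliminary facts. The $E^p_q$-norm is quasi-translation-invariant: for $|\tau|\le 1$, each ball $B_1(k+\tau)$ lies in $B_2(k)$ and is therefore covered by $O_d(1)$ lattice balls $B_1(k')$, which gives $\|g(\cdot+\tau)\|_{E^p_q}\lec \|g\|_{E^p_q}$ via Young's convolution inequality on $\ZZ^d$. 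When $p,q<\infty$, $C_c^\infty$ is dense in $E^p_q$: truncate the $\ell^q$ sum to finitely many lattice cells, then mollify the resulting compactly supported $L^p$-function.

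For \eqref{eq2.4}, given $\ve>0$ pick $\phi\in C_c^\infty$ with $\|f-\phi\|_{E^p_q}<\ve$ and run a standard $3\ve$-argument: quasi-invariance controls the two error pieces, while $\|\phi(\cdot+\tau)-\phi\|_{E^p_q}\to 0$ because only finitely many lattice cells contribute and each $L^p$-norm vanishes by uniform continuity of $\phi$. For \eqref{eq2.5} take the same $\phi$: Lemma~\ref{Wa-estimate} with $\tilde p=p$, $\tilde q=q$, $h=0$ gives $\|e^{t\De}(f-\phi)\|_{E^p_q}\lec\|f-\phi\|_{E^p_q}$ uniformly for $t\le 1$, while $\|e^{t\De}\phi-\phi\|_{E^p_q}\to 0$ by standard $L^p$-convergence on the finitely many lattice cells meeting $\supp\phi$, combined with the Gaussian pointwise bound $|e^{t\De}\phi(x)|\lec t^{-d/2}e^{-|x|^2/(16t)}$ for $|x|$ outside a neighbourhood of $\supp\phi$, which makes the tail of the $\ell^q$ sum super-exponentially small as $t\to 0$.

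For \eqref{eq2.12}, with $t>0$ fixed and $p,q$ arbitrary in $[1,\infty]$, the density approach is unavailable, so I write
\begin{equation*}
e^{(t+h)\De}f-e^{t\De}f=(\Ga(\cdot,t+h)-\Ga(\cdot,t))*f
\end{equation*}
and apply \eqref{ineq:BusbySmith} with $(p_1,p_2)=(1,1)$ and $(q_1,q_2)=(p,q)$: the hypothesis $1/p_2+1/q_2\ge 1$ is $1+1/q\ge 1$, and the exponents yield $r_1=p$, $r_2=q$, giving
\begin{equation*}
\|e^{(t+h)\De}f-e^{t\De}f\|_{E^p_q}\lec\|\Ga(\cdot,t+h)-\Ga(\cdot,t)\|_{L^1}\,\|f\|_{E^p_q}.
\end{equation*}
Writing the kernel difference as $\int_0^h\De\Ga(\cdot,t+s)\,ds$ and using the scaling bound $\|\De\Ga(\cdot,\tau)\|_{L^1}\lec 1/\tau$, the right side is $\lec|\log(1+|h|/t)|\,\|f\|_{E^p_q}\to 0$ as $h\to 0$.

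Finally, for \eqref{vanishing-t=0}, pick $\phi\in C_c^\infty$ with $\|f-\phi\|_{E^{\tilde p}_{\tilde q}}<\ve$; this is possible whenever $\tilde p,\tilde q<\infty$, and when $\tilde q=q=\infty$ (the only case where $\tilde q=\infty$ is admissible since $\tilde q\le q$) the hypothesis $f\in E^{\tilde p}$ furnishes such approximants by the very definition of $E^{\tilde p}$. Split
\begin{equation*}
t^{\frac{d}2(\frac{1}{\tilde p}-\frac1p)}\|e^{t\De}f\|_{E^p_q}\le t^{\frac{d}2(\frac{1}{\tilde p}-\frac1p)}\|e^{t\De}\phi\|_{E^p_q}+t^{\frac{d}2(\frac{1}{\tilde p}-\frac1p)}\|e^{t\De}(f-\phi)\|_{E^p_q}.
\end{equation*}
On $\phi$, Lemma~\ref{Wa-estimate} with indices $(p,q,p,q)$ gives $\|e^{t\De}\phi\|_{E^p_q}\lec\|\phi\|_{E^p_q}$ uniformly on $t\le 1$, so the strictly positive power of $t$ (using $\tilde p<p$) drives this term to $0$; on $f-\phi$, Lemma~\ref{Wa-estimate} with the original indices $(\tilde p,\tilde q,p,q)$ produces exactly $t^{-\frac{d}2(\frac{1}{\tilde p}-\frac1p)}$, cancelling the prefactor and leaving $\lec\|f-\phi\|_{E^{\tilde p}_{\tilde q}}<C\ve$. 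The only delicate point throughout is the density step when $\tilde q=q=\infty$, where $E^{\tilde p}\subsetneq L^{\tilde p}_{\uloc}=E^{\tilde p}_\infty$; the extra hypothesis $f\in E^{\tilde p}$ in the statement is tailored precisely to this case, and once test-function approximants are available the rest is routine.
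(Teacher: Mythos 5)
Your proposal is correct. For \eqref{eq2.12} and \eqref{vanishing-t=0} it is essentially the paper's own argument: the same Busby--Smith convolution bound $\|e^{(t+h)\De}f-e^{t\De}f\|_{E^p_q}\lec\|\Ga_{t+h}-\Ga_t\|_{L^1}\|f\|_{E^p_q}$ (the paper sends the right side to zero by dominated convergence under the envelope $|\Ga_{t+h}-\Ga_t|\le Ct^{-d/2}e^{-|x|^2/(6t)}$ for $|h|<t/2$, whereas you integrate $\|\De\Ga(\cdot,\tau)\|_{L^1}\lec 1/\tau$ in time to get the quantitative bound $\lec\log(1+|h|/t)$ --- a cosmetic difference), and the same density-plus-two-applications-of-Lemma~\ref{Wa-estimate} scheme for \eqref{vanishing-t=0}, including the observation that the case $\td q=q=\infty$ is rescued by the hypothesis $f\in E^{\td p}$.

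Where you genuinely diverge is in \eqref{eq2.4}--\eqref{eq2.5}. The paper proves \eqref{eq2.4} directly, without density: it splits the $\ell^q$ sum into the tail $|k|>m$ (uniformly small in $\tau$ by summability of the $\ell^q$ norm) and the finite block $|k|\le m$ (which vanishes by continuity of translation in $L^p$), and then obtains \eqref{eq2.5} as a one-line corollary via Minkowski's integral inequality applied to $e^{t\De}f-f=\int e^{-|z|^2/4}\bigl(f(\cdot-\sqrt t\,z)-f\bigr)\,dz$. You instead run a $3\ve$ density argument for both, using quasi-translation-invariance of the $E^p_q$ norm and, for \eqref{eq2.5}, the uniform boundedness of $e^{t\De}$ on $E^p_q$ for $t\le1$ together with Gaussian far-field decay of $e^{t\De}\phi$; this is in fact quite close to the paper's alternative ``second proof'' of \eqref{eq2.5}, which also truncates to a compact region and exploits the super-exponential smallness of the tail. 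Both routes are sound: the paper's Minkowski reduction is slicker in making \eqref{eq2.5} an immediate consequence of \eqref{eq2.4}, while your version is self-contained and makes explicit that the hypothesis $p,q<\infty$ enters precisely through the density of $C_c^\infty$ in $E^p_q$.
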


When $\td q = q=\oo$, \eqref{vanishing-t=0} is false if we only assume $f\in E^{\td p}_\oo=L^{\td p}_\uloc$; see Example \ref{example2}. However, it is true for $f\in E^{\td p}$, i.e., the closure of $C^\infty_c$ in $L^{\td p}_\uloc$, as the following proof using a density argument still works.

\begin{proof}
For \eqref{eq2.4}, note that for $\tau\in \R^d$ we have the uniform bound
\[
 \|  f(\cdot + \tau ) - f(\cdot )\|_{E^p_q}  \leq  \|  f(\cdot + \tau ) \|_{E^p_q} +  \|  f\|_{E^p_q}  \lec \|  f\|_{E^p_q}
<\I.
\]
For $|\tau|<1$, we can make  $ \big\|\|  f(\cdot + \tau ) - f(\cdot )\|_{L^p(B_1(k))} \big\|_{\ell^q(|k|>m)}$ arbitrarily small by taking $m$ sufficiently large. Once $m$ is fixed, $\big\|\|  f(\cdot + \tau ) - f(\cdot )\|_{L^p(B_1(k))} \big\|_{\ell^q( |k|\le m)}\to 0$ as $|\tau|\to0$ by properties of $L^p$-spaces.  These show \eqref{eq2.4}.

For \eqref{eq2.5}, note that
\[
(e^{t\Delta}f - f)(x) = \int_{\R^d} e^{-|z|^2 /4}  g(x,z,t)\,dz, \quad g(x,z,t) = f(x-\sqrt t z) - f(x).
\]
By Minkowski's integral inequality in $x \in B_1(k)$,
\[
\norm{e^{t\Delta}f - f}_{L^p_x(B_1(k))} \le \int_{\R^d} e^{-|z|^2 /4}  \norm{ g(\cdot,z,t)}_{L^p_x(B_1(k))}\,dz, 
\]
By Minkowski's integral inequality again in $k \in \ZZ^d$,
\EQN{
\| e^{t\Delta}f - f\|_{E^p_q} & = \norm{\norm{e^{t\Delta}f - f}_{L^p_x(B_1(k))} }_{\ell^q_k}
\\
& \le \int_{\R^d} e^{-|z|^2 /4}  \norm{ \norm{ g(\cdot,z,t)}_{L^p_x(B_1(k))}}_{\ell^q_k}\,dz
\\
&=\int_{\R^d} e^{-|z|^2 /4}  \| f(\cdot-\sqrt t z) - f(\cdot)\|_{E^p_q}\,dz.
}
The last integral vanishes as $t\to 0_+$ by the dominated convergence theorem and \eqref{eq2.4}. This shows \eqref{eq2.5}.

We now show \eqref{eq2.12}, continuity at $t>0$. For any $p,q\in[1,\infty]$, by the Busby-Smith convolution inequality \eqref{ineq:BusbySmith}, we have 
\[
\norm{e^{(t+h)\De}f - e^{t\De}f}_{E^p_q} \lec \norm{\Ga_{t+h} - \Ga_t}_{L^1} \norm{f}_{E^p_q} \to0,
\]
as $h\to0$ from either the left or right, by the dominated convergence theorem with $|\Ga_{t+h} - \Ga_t| \le Ct^{-d/2} e^{-|x|^2/(6t)}$ when $|h|<t/2$.

We now show \eqref{vanishing-t=0}.  Since we will send $t \to 0_+$, we assume $t\leq 1$. Denote $\si =\frac{d}2\bke{\frac1{\tilde{p}} - \frac1p} >0 $.
For any $\e>0$, we can choose $b \in  C^\infty_c$ %
with $\norm{f-b}_{ E^{\tilde{p}}_{\tilde{q}}} \le \e/4C$ where $C$ is the constant in \eqref{lemma:linear.stokes.amalgam}.
Then by \eqref{lemma:linear.stokes.amalgam},
\[
t^\si \norm{e^{t\De}(f-b)}_{ E^p_q} \le 2C\norm{f-b}_{ E^{\tilde{p}}_{\tilde{q}}}
\le
\e/2.
\]
By \eqref{lemma:linear.stokes.amalgam} again, 
\[
t^\si \norm{e^{t\De}b}_{ E^p_q}\le \td  Ct^\si    \norm{b}_{ E^{{{ p}}}_{\tilde{q}}}  \to 0
\quad \text{as } t \to 0_+.
\]
These show \eqref{vanishing-t=0}.
\end{proof}

In the following we give a direct proof of \eqref{eq2.5} which does not use \eqref{eq2.4}. 

\begin{proof}[Second proof of \eqref{eq2.5}]
Fix an arbitrarily small $\e>0$. We may write 
\[
e^{t\Delta} f = e^{t\Delta} (f \chi_{B_R}) + e^{t\Delta} (f (1-\chi_{B_R})).
\]
For large enough $R$ we have by \eqref{lemma:linear.stokes.amalgam} that
\[
\| e^{t\Delta } (f (1-\chi_{B_R}) ) - f (1-\chi_{B_R})\|_{E^{p}_q} \lec \| f (1-\chi_{B_R})\|_{E^p_q} <\e/2,
\]
by summability in $\ell^q$.
We now show for sufficiently large $R$ and small $t$ that
\[
 \| e^{t\Delta} (f \chi_{B_R}) - f \chi_{B_R} \|_{E^p_q} <\e /2.
\]
We treat this term in two steps using the decomposition
\EQ{\label{eq2.9}
e^{t\Delta} (f \chi_{B_R}) - f \chi_{B_R} = \chi_{B_{2R}} (e^{t\Delta} (f \chi_{B_R}) - f \chi_{B_R} ) + (1- \chi_{B_{2R}} )e^{t\Delta} (f \chi_{B_R}).
}
Focusing on the  second term, we have for $|k|\geq 2R$ that
\[
\|  e^{t\Delta} (f \chi_{B_R}) \|_{L^p(B_1 (k))} \lec \frac 1 {t^{d/2}} e^{-c|k|^2/t} R^{{d-d/q}} \|f\|_{E^p_q}.
\]

Hence, noting $\ell^1\subset \ell^q$,
\EQN{
\|(1- \chi_{B_{2R}} )e^{t\Delta} (f \chi_{B_R})\|_{E^p_q} &\lec  \|  {t^{-d/2}} e^{-c|k|^2/t} \|_{l^q(|k|\geq 2R)} R^{{d-d/q}} \|f\|_{E^p_q}
\\&\lec  \|  {t^{-d/2}} e^{-c|k|^2/t} \|_{l^1(|k|\geq 2R)} R^{{d-d/q}} \|f\|_{E^p_q}
\\&\to 0 \text{ as }R\to \I,
}
provided $t\leq 1$.
Hence we choose $R$ large enough that the left hand side above is bounded by $\e/4$.
Then,  the first term of \eqref{eq2.9} is bounded by
\EQ{
\|\chi_{B_{2R}} (e^{t\Delta} (f \chi_{B_R}) - f \chi_{B_R} )\|_{E^p_q} \lec_{R} \|  (e^{t\Delta} (f \chi_{B_R}) - f \chi_{B_R} )\|_{L^p  } \to 0\text{ as } t\to 0.
}
Since $\e>0$ is arbitrarily small, the above shows \eqref{eq2.5}.
\end{proof}

\subsection{Space time integral estimates}

We now develop spacetime integral bounds for the heat equation with data in $E^r_q$. Recall that the spacetime integrals $L^s_T E^p_q$ and $E^{s,p}_{T,q}$ are defined in \eqref{LsEpq-def} and \eqref{Espq-def}.

\begin{lemma}\label{lemma:linear.heat.amalgam.spacetime-new}

Let $d\in \NN$, $1<r\le s\le\infty$, $r\le p<\infty$,
$\frac 2s + \frac dp = \frac dr$, and $1 \le q \le m \le \infty$. Suppose $f \in E^r_q(\R^d)$.

(a) For $0<  T<\infty$, we have 
\begin{align}
\label{lemma:linear.sotkes.amalgam.a}
\norm{e^{t\Delta}f}_{E^{s,p}_{T,m}} &\lesssim (1 + T^\be )\|f\|_{E^r_q},
\\
\label{lemma:linear.sotkes.amalgam.b}
\mathbbm{1}_{\{q\le s\}}
  \norm{e^{t\De} f}_{L^s_T E^p_{m} }  
& \lesssim \bke{1+ \mathbbm{1}_{\{m\le s\}} T^\be +\mathbbm{1}_{\{q\le s< m\}\setminus\{q\le r,p\le m \}}T^{\be'}  }\|f\|_{E^r_q},
\end{align}
for any $\be,\be' \in [0,\infty)$ and $\be > \al$, $\be' > \al'$ with
\[
\al = \frac d{2m}-\frac d{2q}+ \frac 1s,\quad
\al' = \frac d{2s}-\frac d{2q}+ \frac 1s. 
\]
We can take $\be=\al$ if $ \al \ge 0$ and $1<q<m<\infty$, and we can take $\be'=\al'$ if $ \al' \ge 0$ and $1<q<s$.

(b) For $T=\infty$, we have 
\begin{align}
\label{lemma:linear.sotkes.amalgam.c}
\one_A \norm{e^{t\Delta}f}_{E^{s,p}_{T=\infty,m}} 
&\lesssim  \|f\|_{E^r_q},
\\
\label{lemma:linear.sotkes.amalgam.d}
(\mathbbm{1}_{\{m\le s\} \cap A}  + \mathbbm{1}_E) \norm{e^{t\De} f}_{L^s_{T=\infty} E^p_{m} } & 
\lesssim  \|f\|_{E^r_q},
\end{align}
where $A$ is the subset of parameters that further satisfy
\[
A=\bket{\al\le 0; \ 1<q<m<\infty \text{ if }\al=0},
\]
and
$E=E_1\cup E_2$ is the subset of parameters of $\{q \le s < m\}$, unrelated to $A$,%
\EQN{
E_1&=\bket{ q \le r \le s \le p \le m,\, s\neq m,\, r\neq p} , \\
E_2&=\bket{ s<m \text{ and } q \le \frac {ds}{d+2}\ \text{ with }\  q>1\text{ if }q = \frac {ds}{d+2}}.
}

\end{lemma}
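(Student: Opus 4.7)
The plan is to reduce each estimate to a discrete convolution on $\mathbb{Z}^d$. Decompose the data as $f = \sum_{j \in \mathbb{Z}^d} f_j$ with $f_j = f\,\chi_{B_1(j)}$, so $a_j := \|f_j\|_{L^r}$ satisfies $\|(a_j)\|_{\ell^q} \lesssim \|f\|_{E^r_q}$. For each lattice point $k$, I would bound $\|e^{t\Delta}f_j\|_{L^s_T L^p(B_1(k))}$ by splitting into a near field ($|k-j|\le 3$) and a far field ($|k-j|\ge 3$). On the near field, invoke Giga's estimate \eqref{Giga-est} on $\mathbb{R}^d$ to get $\|e^{t\Delta}f_j\|_{L^s_\infty L^p} \lesssim \|f_j\|_{L^r}$, with the endpoint $r=s$ treated by the generalized Marcinkiewicz argument of the appendix. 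On the far field, use the pointwise heat kernel bound $|\Gamma(x-y,t)|\lesssim t^{-d/2}e^{-c|k-j|^2/t}$ and the substitution $u=|k-j|^2/t$ to get $\|e^{t\Delta}f_j\|_{L^s_\infty L^p(B_1(k))}\lesssim |k-j|^{2/s-d}\|f_j\|_{L^r}$. Concatenating yields the pointwise-in-$k$ estimate $\|e^{t\Delta}f\|_{L^s_T L^p(B_1(k))}\lesssim (\tilde K_T\ast a)(k)$, with $\tilde K_\infty(D)\sim (1+|D|)^{2/s-d}$ and $\tilde K_T$ its Gaussian-truncated analog at scale $\sqrt T$.

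Taking the $\ell^m_k$ norm and applying discrete Young $\|\tilde K\ast a\|_{\ell^m}\le \|\tilde K\|_{\ell^{n_m}}\|a\|_{\ell^q}$ with $1/n_m = 1 + 1/m - 1/q$ delivers the $E^{s,p}_{T,m}$ bounds \eqref{lemma:linear.sotkes.amalgam.a} and \eqref{lemma:linear.sotkes.amalgam.c}. A direct lattice count shows $\|\tilde K_\infty\|_{\ell^{n_m}}<\infty$ exactly when $n_m(d-2/s)>d$, equivalently $\alpha<0$; the borderline $\alpha=0$ is handled by the weak-type (Marcinkiewicz) Young inequality, which forces $1<q,m<\infty$ and reproduces the set $A$. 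For finite $T$ the Gaussian truncation yields $\|\tilde K_T\|_{\ell^{n_m}}\lesssim 1+T^\beta$ for any $\beta>\alpha$, with equality $\beta=\alpha$ when $\alpha\ge 0$ and $1<q,m<\infty$ via weak Young. The bounds \eqref{lemma:linear.sotkes.amalgam.b} and \eqref{lemma:linear.sotkes.amalgam.d} for $L^s_T E^p_m$ then follow from the $E^{s,p}_{T,m}$ bounds via the Minkowski inequality \eqref{ineq:embedding.parabolic.space}, which is valid exactly when $m\le s$; this is why the factor $\mathbbm{1}_{\{m\le s\}}$ appears.

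When $m>s$, Minkowski points the wrong way and the $L^s_T E^p_m$ bound has to be derived directly. On the set $E_1$, the chain $E^r_q \hookrightarrow L^r \to L^s L^p \hookrightarrow L^s E^p_m$ — where the middle arrow is Giga's estimate and the embeddings use $q\le r$ and $p\le m$ respectively — suffices, and the strict inequalities $s\ne m$ and $r\ne p$ just place this set in the genuinely-beyond-Minkowski regime. The set $E_2$, characterized by $\alpha'\le 0$ (equivalently $q\le ds/(d+2)$), is where I expect the main obstacle: the direct approach of bounding $\|e^{t\Delta}f\|_{E^p_m}$ pointwise in $t$ and then integrating in $L^s_t$ has a divergent time integral, so instead one must perform the discrete convolution on the lattice after first time-integrating the near-field piece by Giga and the far-field piece by the explicit Gaussian $L^s_t$ integral, and then apply weak-type Young in the lattice index to exploit the extra decay encoded in the small-$q$ hypothesis. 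The companion finite-$T$ bound with $\beta'>\alpha'$ for $m>s$ outside the $E_1$ embedding regime follows from the same method with the Gaussian truncated at scale $\sqrt T$, and the endpoint exclusions ($q>1$ at $q=ds/(d+2)$, and the strict/non-strict distinctions among the regions of Figure \ref{fig:m*}) correspond precisely to the weak-type Young endpoints that must be tracked throughout.
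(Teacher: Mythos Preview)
Your proposal is essentially the same approach as the paper's proof: near/far decomposition relative to lattice points, Giga's estimate \eqref{Giga-est} on the near field, the explicit heat-kernel $L^s_t$ integral on the far field yielding the kernel $|k'|^{2/s-d}$, then discrete Young (or discrete Hardy--Littlewood--Sobolev at the endpoint $\alpha=0$) in the lattice variable. The paper decomposes by target ball ($f_1^k,f_2^k$) rather than by source ball ($f_j$), but these are equivalent bookkeeping.

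One place where the paper's argument is simpler than you anticipate is the set $E_2$. You describe this as ``the main obstacle'' requiring a careful discrete convolution exploiting the small-$q$ hypothesis, but the paper's route is a one-line reduction: since $m>s$ implies $\ell^s\hookrightarrow\ell^m$, one has $\|e^{t\Delta}f\|_{L^s_T E^p_m}\le \|e^{t\Delta}f\|_{L^s_T E^p_s}=\|e^{t\Delta}f\|_{E^{s,p}_{T,s}}$ (the last equality is Fubini when the time and lattice exponents coincide), and then part (a) with $m$ replaced by $s$ gives the result with the exponent $\alpha'=\frac{d}{2s}-\frac{d}{2q}+\frac{1}{s}$. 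This is exactly the $\alpha'$ you wrote down, so you were heading in the right direction, but the explicit embedding $L^s_T E^p_m\hookleftarrow E^{s,p}_{T,s}$ is the clean connecting step your description leaves implicit. The finite-$T$ bound \eqref{lemma:linear.sotkes.amalgam.b} in the regime $q\le s<m$ outside $E_1$ follows the same way.
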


\begin{remark}\label{rem25}
(i) For finite $T<\infty$, we can allow $\frac 2s + \frac dp > \frac dr$ for estimates \eqref{lemma:linear.sotkes.amalgam.a}--\eqref{lemma:linear.sotkes.amalgam.b}. Since such estimates are not optimal and \eqref{lemma:linear.sotkes.amalgam.b}  follows directly from Lemma  \ref{Wa-estimate}, they are avoided for simplicity.
\medskip

(ii) This lemma extends Giga \cite[Lemma, p.{~196}]{Giga}. Indeed, if we take 
$1<q=r<m=p<\infty$ (so that $\al=0$) in \eqref{lemma:linear.sotkes.amalgam.d}, then we get
\[
\norm{e^{t\De} f}_{L^s_{T=\infty} L^p} \lec \norm{f}_{L^r},\quad 1<r\le s\le\infty,\quad r\le p<\infty,
 \quad
\frac 2s + \frac dp = \frac dr,
\]
which recovers \eqref{Giga-est}--\eqref{0812b}. 

\medskip
(iii) %
Giga's estimate \eqref{Giga-est}
is proved in \cite{Giga} using the Marcinkiewicz interpolation theorem (MIT) for \emph{subadditive} maps on $L^p$ (\cite[Appendix]{Stein70}). As our proof below uses \eqref{Giga-est}, it uses MIT implicitly. It is natural to ask if we can prove Lemma \ref{lemma:linear.heat.amalgam.spacetime-new} by MIT directly. 
Lakey \cite{Lakey} shows that Wiener amalgam spaces $E^{p}_q $ are interpolation spaces, and a Riesz-Thorin type interpolation theorem is valid on $E^{p}_q $ (\cite[Theorem 4.5]{Lakey}). However, it is unclear whether the Marcinkiewicz interpolation theorem for subadditive maps holds on $E^{{p}}_q $. If yes, then we may {try to} prove \eqref{lemma:linear.sotkes.amalgam.d} directly and {possibly} enlarge the set of admissible parameters. 

\medskip
(iv) Recall $q\le m$. When $m \le s$, we have estimates of $\norm{e^{t\De} f}_{L^s_{T=\infty} E^p_{m} }$ in
\eqref{lemma:linear.sotkes.amalgam.b} and \eqref{lemma:linear.sotkes.amalgam.d}. When $s<q$,  $\norm{e^{t\De} f}_{L^s_{T<\infty} E^p_{m} }$ cannot be controlled by $ \|f\|_{E^r_q}$,
as to be shown by Example \ref{example2}.
For the remaining case $q\le s<m$, we have finite time estimate in \eqref{lemma:linear.sotkes.amalgam.b}, but no infinite time estimate \eqref{lemma:linear.sotkes.amalgam.d}
unless the parameters belong to the set $E$. Its complement set $\{q\le s<m\} \setminus E$ is unclear to us.

\end{remark}

\begin{proof} 
We first prove the $E^{s,p}_{T,m}$ estimates.
For every $k\in\ZZ^d$ and $x\in B_1(k)$, we decompose
\[
e^{t\De} f (x,t)= e^{t\De} (f\chi_{B_4(k)}) + e^{t\De} (f(1-\chi_{B_4(k)})) =: f_1^k(x,t) + f_2^k(x,t).
\]
Then
\EQN{
&\norm{e^{t\De} f}_{E^{s,p}_{T,m}}
= \norm{ \norm{ \norm{ e^{t\De} f }_{L^p(B_1(k))} }_{L^s(0,T)} }_{\ell^{m}(k\in\ZZ^d)}\\
&\le \norm{ \norm{ \norm{ f_1^k(\cdot,t) }_{L^p(B_1(k))} }_{L^s(0,T)} }_{\ell^{m}(k\in\ZZ^d)} + \norm{ \norm{ \norm{ f_2^k(\cdot,t) }_{L^p(B_1(k))} }_{L^s(0,T)} }_{\ell^{m}(k\in\ZZ^d)}\\
&=: A_1 + A_2.
} 
For $A_1$, we use  Giga's integral estimate \eqref{Giga-est} to get
\[
\norm{ \norm{f_1^k(\cdot,t)}_{L^p(B_1(k))} }_{L^s(0,T)} 
\le \norm{e^{t\De}(f\chi_{B_4(k)})}_{L^s(0,\infty; L^{p}(\R^d))}\lec 
\norm{f}_{L^r(B_4(k))}.
\]
Hence
\EQ{\label{0402-a}
A_1 \le \norm{\norm{f}_{L^r(B_4(k))} }_{\ell^{m}(k\in\ZZ^d)} \approx \|f\|_{E^r_{m}} \lec  \|f\|_{E^r_q},
}
since $q\leq m$ so $\ell^{q}$ embeds continuously in $\ell^{m}$.

\medskip

For $A_2$, as
\[
|f^k_2(x,t)| \lec \sum_{k':\ |k'-k|>4} \int_{B_1(k)} t^{-d/2} e^{- \frac {|x-y|^2}{4t}} |f(y)|\,dy,
\]
we have
\[
\norm{f^k_2(\cdot,t)}_{L^p(B_1(k))} \lec t^{-d/2} \sum_{|k'|\ge1} e^{-|k'|^2/(4t)} \norm{f}_{L^r(B_1(k'-k))}.
\]
Taking $L^s_T$ on both sides and applying Minkowski's integral inequality, we get
\EQN{
\norm{f^k_2}_{L^s_TL^p(B_1(k))} &\lec \bigg[ \int_0^T \bigg( t^{-d/2} \sum_{|k'|\ge1} e^{-|k'|^2/(4t)} \norm{f}_{L^r(B_1(k'-k))}\bigg) ^s dt \bigg]^{1/s}\\
&\le \sum_{|k'|\ge1} \bke{\int_0^T \abs{t^{-d/2} e^{-|k'|^2/(4t)} \norm{f}_{L^r(B_1(k'-k))} }^s dt}^{1/s}\\
&= \sum_{|k'|\ge1} \norm{f}_{L^r(B_1(k'-k))} \bke{\int_0^T t^{-\frac{d}2 s}\, e^{-\frac{|k'|^2}{4t} s}\, dt}^{1/s},\quad t=|k'|^2 \tau\\
&= \sum_{|k'|\ge1} \norm{f}_{L^r(B_1(k'-k))} |k'|^{-d+\frac2s} \bke{\int_0^{\frac{T}{|k'|^2}} \tau^{-\frac{d}2 s}\, e^{-\frac{s}{4\tau} }\, d\tau}^{1/s}.
}
Since $\frac dr=\frac2s+\frac dp>\frac2s$, we have
$ds>2$ and
\[
\int_0^\infty \tau^{-\frac{d}2 s}\, e^{-\frac{s}{4\tau} }\, d\tau < \infty.
\]
On the other hand, if $S<1$,
\[
\int_0^S \tau^{-\frac{d}2 s}\, e^{-\frac{s}{4\tau} }\, d\tau
\lec \int_0^S  e^{-\frac{s}{8\tau} }\, d\tau \lec e^{-\frac{s}{8S} }.
\]
This estimate also holds for $S>1$. 
We conclude
\EQ{
\norm{f^k_2}_{L^s_TL^p(B_1(k))} \lec  \sum_{|k'|\ge1} \norm{f}_{L^r(B_1(k'-k))} |k'|^{-d+\frac2s} e^{-\frac {|k'|^2}{8T}}
.
}

For fixed $0<T< \infty$, we can bound for any $\be \in [0,\oo)$
\[
|k'|^{-d+\frac2s}
 {e^{-\frac {|k'|^2}{8T}}}
\lec |k'|^{-d+\frac2s} 
 {\bke{\frac {|k'|^2}{8T}}^{-\be}}
\lec |k'|^{-d+\frac2s-2\be} T^{\be}.
\]
By the Young's convolution inequality, 
\Eq{
A_2 \lec \norm{\sum_{|k'|\ge1} \norm{f}_{L^r(B_1(k'-k))}  |k'|^{-d+\frac2s-2\be} T^{\be} }_{\ell^{m}(k\in\ZZ^d)}
\lec \norm{f}_{E^r_q}\norm{|k|^{-d+\frac2s-2\be} T^{\be} }_{\ell^{n}(k\not=0\in\ZZ^d)},
}
where $\frac 1{m}+1=\frac1q+\frac 1n$, and $n \ge 1$ thanks to $m \ge q$. For the last norm to be finite, we need
\[
d-\frac2s+2\be > \frac dn = \frac d{m}+d-\frac dq,
\]
that is, $2\be > \frac d{m}-\frac dq+ \frac 2s$. Thus if $\frac d{m}-\frac dq+ \frac 2s<0$, we can take $\be=0$. If  $\frac d{m}-\frac dq+ \frac 2s\ge 0$, we take any $\be>\al$, with
\[
\al = \frac d{2m}-\frac d{2q}+ \frac 1s.
\]
In all cases, we obtain 
\EQ{\label{0125a}
A_2\lec \norm{f}_{E^r_q} T^{\be}.
}
If $\al \ge 0$ and $1<q<m<\infty$,  we can choose $\be=\al$ and use the discrete version of the Hardy--Littlewood--Sobolev inequality in \cite[Proposition (a)]{Stein-Wainger-discreteHLS} to derive \eqref{0125a}, noting that
$d-\frac2s+2\al = \frac d{m}+d-\frac dq\in (0,d)$.

Eq.~\eqref{0402-a} and \eqref{0125a} show the $E^{s,p}_{T,m}$ estimate in 
\eqref{lemma:linear.sotkes.amalgam.a}. 

For $L_T^sE^p_m$ estimate in \eqref{lemma:linear.sotkes.amalgam.b}, note that it follows from the $E^{s,p}_{T,m}$ estimates when $m \le s$ using \eqref{ineq:embedding.parabolic.space} due to Minkowski’s integral inequality.

If $s< m$, then
\EQS{\label{0213a}
\norm{e^{t\De} f}_{L_T^sE^p_m} &\lec 
\norm{e^{t\De} f}_{L_T^sE^p_s}  = \norm{e^{t\De} f}_{E^{s,p}_{T,s}} \\
&\lec (1+T^{\be'})\norm{f}_{E^r_q} 
}
by $E^{s,p}_{T,m}$ estimate in \eqref{lemma:linear.sotkes.amalgam.a} (with $m$ replaced by $s$)
for any $\be' \in [0,\infty)$ and $\be' > \al'$ with
\[
\al' = \frac d{2s}-\frac d{2q}+ \frac 1s. 
\]
The condition to use \eqref{lemma:linear.sotkes.amalgam.a} is $1\le q \le s \le \infty$. Hence we need to further assume $q \le s$, and we have \eqref{0213a} if $q\le s<m$.
We can take $\be'=\al'$ if $ \al' \ge 0$ and $1<q<s$.

Alternatively, if $p\le m$ and $q\le r$, (then $\al=\frac d{2m}-\frac d{2q}+ \frac 1s\le \frac d{2p}-\frac d{2r}+ \frac 1s=0$), we have
\EQN{
\norm{e^{t\De} f}_{L_T^sE^p_m} = \norm{ \norm{e^{t\De} f}_{E^p_m} }_{L_T^s}
&\lec \norm{ \norm{e^{t\De }f}_{L^p} }_{L_T^s}\ \text{(since $p\le m$)}\\
&\lec \norm{f}_{L^r}\ \text{(by Giga's estimate \eqref{Giga-est})}\\
&\lec \norm{f}_{E^r_q}\ \text{(since $q\le r$)}.
}
In this case we can take $\be'=0$.
This and \eqref{0213a} prove the $L^s_TE^p_m$ estimate in \eqref{lemma:linear.sotkes.amalgam.b}.

Part (b) is a consequence of part (a), when the constants do not depend on $T$ and we can send $T\to \infty$. The constant in \eqref{lemma:linear.sotkes.amalgam.a} is independent of $T$ if $\al\le 0$ (and $\al=0$ requires $1<q<m<\infty$). Hence we have \eqref{lemma:linear.sotkes.amalgam.c}. For \eqref{lemma:linear.sotkes.amalgam.d}, 
 when $m \le s$, we have the estimate if the parameters are in the set $A$. When $q \le s<m$,
we need one of the following 2 cases:
\[
1.\quad q\le r<p\le m\qquad \text{ or }\qquad
2.\quad q\le s<m\ \text{ and }\ \al'\le0\  \text{($\al'=0$ requires $1<q<s$)}.
\]
Note that $\al'\le0$ if and only if $q\le \frac{ds}{d+2}$. This implies that $q=\frac{ds}{d+2}<s$ when $\al'=0$. So we only need to require $q>1$ for $\al'=0$. {Case 2 corresponds to the set $E_2$.}
There are two subcases for Case 1: 
\[
1.1.\quad q\le r, \quad s< p\le m,\qquad\qquad
1.2.\quad q\le r, \quad p\le s<m.
\]
Since $p\le s$ if and only if $r\le\frac{ds}{d+2}$, Subcase 1.2 is included in Case 2. (The endpoint case $1=q=r=\frac{ds}{d+2}$ does not occur since $r>1$ by assumption.)  {Subcase 1.1 corresponds to the set $E_1$.}
This completes the proof of the lemma.
\end{proof}

\medskip
We digress to discuss the condition $q \le s$ in the factor $\mathbbm{1}_{\{q \le s\}}$ in \eqref{lemma:linear.sotkes.amalgam.b}. In particular, it prevents spacetime integral estimates in $L^sL^p_\uloc$---namely for the Maekawa-Terasawa solutions---of the form
\EQ{\label{th2.1-eq2}
\norm{u}_{L^s_T L^p_\uloc}  \lec \norm{ a}_{L^3_\uloc}, \quad \frac 2s+\frac 3p \le 1,
}
when $d=r=3$. This means the spaces $E^{s,p}_{T,q}$ are better suited for the analysis of the Navier-Stokes problem. A similar situation is evident for  Besov spaces \cite[Section 2.6.3]{BCD}   where the space-time integral is applied to individual  Littlewood-Paley blocks and then summed.
If we estimate $\norm{u}_{L^s_T L^p_\uloc}$ naively using the linear estimate of Maekawa-Terasawa \cite[Corollary 3.1]{MaTe}, i.e., \eqref{lemma:linear.stokes.amalgam}  with $q=\td q = \infty$, 
then we obtain for $d=r=3$
\[
\norm{\norm{ e^{t \De}a}_{L^p_\uloc} }_{L^s_T}
 \lec  \norm{ (1+t^{-\frac12 + \frac 3{2p}}) \norm{ a}_{L^3_\uloc} }_{L^s_T},
\]
which diverges if $2/s+3/p\le1$. It converges if we decrease $s$ so that $\frac 2s+\frac 3p > 1$, but the integrability at time 0 is then not as good as $E^{s,p}_{T,q}$ without decreasing $s$.

 In fact, our next example shows that \eqref{th2.1-eq2} is wrong. 
Thus, for the existence theorem, we cannot replace the space $ E^{5,5}_{T,\I}$ by the strict subspace $L^5_T L^5_\uloc$.

\medskip
\begin{example}\label{example2} 

Let the exponents $p,q,r,s,m$ be as in Lemma \ref{lemma:linear.heat.amalgam.spacetime-new}.
The following example shows that $e^{t\De}a$ is not necessarily in $L^s_TE^p_m$ for $a \in E^r_q$ if $ s < q$.
In particular, the factor $\mathbbm{1}_{\{q\le s\} } $ in 
\eqref{lemma:linear.sotkes.amalgam.b} is necessary.

First note that for $\de>0$, $x_0 \in \R^d$, $x \in B_\de(x_0)$ and $t \in (\de^2,2\de^2]$,
\EQ{\label{0318a}
\int _{B_\de(x_0)} \Ga(x-y,t)dy = \int _{B_\de(x_0)} Ct^{-d/2}  e^{-|x-y|^2/4t}dy 
\ge C,
}
where $C$ is independent of $\de$.
For $\beta=\frac dr$ and $\{c_k\}_{k \in \NN} \in \ell^q$, $c_k \ge 0$,
define
\EQN{
a(x) &=  \sum_{k\in \NN}  c_k a_k(x) ,
\\
a_k(x)& = 
2^{\frac{k\beta}2} \cdot\chi_{B_k}(x), \quad B_k =B(x_k , 2^{-k/2}). %
}
We may take $x_k =  2ke_1$. Note that $\norm{a_k}_{L^r} $ is constant in $k$, and hence $\norm{a}_{E^r_q} \approx \norm{c_k}_{\ell^q}$.

Let $u(x,t) = \int_{\R^d}  \Ga(x-y,t)a(y)dy $, i.e., $u(\cdot,t) = e^{t \De}a$.
By \eqref{0318a},
\[
u(x,t) \ge c_k 2^{\frac{k\beta}2}, \quad\text{if }x\in B_k\ \text{and}\ t\in (2^{-k},2^{-k+1}].
\]
Hence for $t\in (2^{-k},2^{-k+1}]$ and $1\le m \le \infty$,%
\EQ{\label{eq2.27}
\norm{u(t)}_{E^p_m}\gec\norm{u(t)}_{L^p_\uloc}\gec
\norm{u(t)}_{L^p(B_1(x_k))}\gec c_k (2^{\frac{k}2})^{\beta-\frac {d}p}\gec c_k t ^{-\frac12(\beta-\frac {d}p)} = c_k t ^{-\frac1s},
}
and
\EQS{\label{eq2.28}
\int_0^1 \norm{u(t)}_{L^{p}_\uloc}^sdt &=\sum_{k=1}^\infty  \int_{2^{-k}}^{2^{-k+1} }  \norm{u(t)}_{L^{p}_\uloc}^s dt 
\\
& \gec \sum_{k=1}^\infty \int_{2^{-k}}^{2^{-k+1}}  \bke{c_kt ^{-\frac1s}}^{s} dt 
 = \ln 2 \sum_{k=1}^\infty c_k ^s.
}

Thus, if $s<q \le \infty$, we can choose $\{c_k\}_{k \in \NN} \in \ell^q$ so that $\{c_k\}_{k \in \NN} \not \in \ell^s$. Then $u \not \in L^s_T E^p_\oo$ and hence $u \not \in L^s_T E^p_m$ for $T=1$ although $a \in E^{r}_q$.

Note that, when $q=\infty$, we can take $c_k \to 0$ very slowly, so that $a$ is in $E^r\subsetneqq E^r_\oo$, (i.e., $a$ is in the closure of $C^1_c$ in $L^r_\uloc$),  but not in $E^r_{q'}$ for any $q' <\infty$. For example, $c_k = (\ln(2+ k))^{-1}$.

The above construction has an interesting application: 
In the special case $q=r$ and $s<r$, we can take $\{c_k\}_{k \in \NN} \in \ell^r \setminus \ell^s$.
Then $a \in E^r_r = L^r$ and $u \not \in L^s_T E^p_\infty$. 
Hence  $u \not \in L^s_T L^p$, and Giga's estimate 
\eqref{Giga-est} is invalid when $s<r$.
In particular, for our application to \eqref{NS} when $q=r=d=3$, 
\eqref{Giga-est} is invalid for $s<3$, i.e., $p>9$.

The above example also serves as a counter example for \eqref{vanishing-t=0} when $\td q = q=\oo$ and $a\in E^p_\oo$ is not in $E^p$. Specifically, let $r<p$, $\beta = \frac dr$ and $c_k=1$. Then $a \in L^r_\uloc$.  For $t\in (2^{-k},2^{-k+1}]$, we have by \eqref{eq2.27} that $\norm{u(t)}_{L^p_\uloc} \gec t^{-\frac{d}2\bke{\frac1{r} - \frac1p}}$.
\hfill $\square$ 
\end{example}

\bigskip 
We next obtain space-time integral estimates for the Duhamel term for solutions to the Stokes equations. For simplicity, we limit ourselves to $d=3$. {On the right side we use $\|F\|_{E^{\td s,\td p}_{T,\td m}}$ in Lemma \ref{lemma:duhamel.amalgam}, and $\norm{F}_{L^{\td s}_T E^{\td p}_{\td m}}$ in Lemma \ref{lem-bilinear-LsEpm}.}

\begin{lemma}\label{lemma:duhamel.amalgam}
For $F: \R^3 \times (0,T)\to \R^{3\times 3}$ let 
\EQ{\label{LF.def}
L(F)_i(x,t)= \int_0^t \int_{\R^3} \pd_l S_{ij}(x-y,t-\tau) F_{lj}(y,\tau)\,dy\,d\tau.
}
Let $1\le\tilde{p}\le p\le\infty$, $1\le \td m \le m \le \infty$ and {$1\le\tilde{s}\le s\le\infty$}. 
Further assume
\EQ{\label{sigmage0}
\si := \frac12 - \frac32\bke{\frac1{\td p} - \frac1p} - \bke{\frac1{\td s}- \frac1s}  \ge0
}
{with $1<\tilde{s}< s<\infty$ in the case of equality $\si=0$.}

(a)
For $0<T<\infty$,
\EQ{\label{eq-est-LF-youngs}
\mathbbm{1}_{m\leq s}   \| L(F)\|_{L^s_T E^p_m}  \lec \| L(F)\|_{E^{s,p}_{T,m}} 
\lec (T^{\si} + T^{\beta}) \|F\|_{E^{\td s,\td p}_{T,\td m}},
}
where  $\beta\in [0,1-  \frac 1{\td s} + \frac 1s]$ and $\be>\al$, with
\[
\al =  \frac12- \frac32\bke{\frac1{\td m} - \frac1m} - \bke{\frac1{\td s}- \frac1s} .
\]
We can take $\beta=\al$ if $\al\ge0$ and $1<\td m< m<\infty$.

(b)
Suppose $\si=0$ and $\al\le 0$. Assume further $1<\td m< m<\infty$ if $\al=0$.
Then
\EQ{\label{eq-est-LF-infty}
\mathbbm{1}_{m\leq s}   \| L(F)\|_{L^s_{T=\infty} E^p_m}  \lec   \| L(F)\|_{E^{s,p}_{T=\infty,m}} \lesssim \|F\|_{E^{\td s,\td p}_{T=\infty,\td m}}.
}

\end{lemma}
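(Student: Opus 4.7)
The plan is to mimic the proof of Lemma \ref{lemma:linear.heat.amalgam.spacetime-new}, replacing the heat kernel by $\partial_l S$ and accounting for an additional temporal convolution via the Oseen estimate \eqref{Oseen.est}. Fix $k \in \ZZ^3$ and $x \in B_1(k)$, and decompose $F = F_1^k + F_2^k$ with $F_1^k := F\chi_{B_4(k)}$, so that $L(F) = L(F_1^k) + L(F_2^k)$ on $B_1(k) \times (0,T)$.

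For the local piece $L(F_1^k)$, classical parabolic maximal regularity for the Stokes system (Solonnikov; Giga--Sohr) gives, under the scaling $\frac{2}{\tilde s}+\frac{3}{\tilde p}=1+\frac{2}{s}+\frac{3}{p}$ (equivalently $\sigma = 0$) and $1 < \tilde s < s < \infty$,
\[
\|L(F_1^k)\|_{L^s_T L^p(\R^3)} \lec \|F\|_{L^{\tilde s}_T L^{\tilde p}(B_4(k))}.
\]
When $\sigma > 0$, H\"older's inequality in time trades the $L^{\tilde s}_T L^{\tilde p}$ norm of $F_1^k$ for its $L^s_T L^p$ counterpart at the cost of a factor $T^\sigma$. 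Taking the $\ell^m_k$ norm and using $\ell^{\tilde m} \hookrightarrow \ell^m$ (valid since $\tilde m \leq m$) bounds this contribution by $T^\sigma \|F\|_{E^{\tilde s, \tilde p}_{T, \tilde m}}$.

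For the far-field $L(F_2^k)$, \eqref{Oseen.est} gives $|\partial_l S(x-y,t-\tau)| \lec (|x-y|+\sqrt{t-\tau})^{-4}$, and since $|x-y| \sim R := |k-k'|$ for $x \in B_1(k)$ and $y \in B_1(k')$ with $R > 4$, H\"older in $y$ yields
\[
\|L(F_2^k)(\cdot, t)\|_{L^p(B_1(k))} \lec \sum_{R>4} \int_0^t (R+\sqrt{t-\tau})^{-4}\|F(\cdot,\tau)\|_{L^{\tilde p}(B_1(k'))}\,d\tau.
\]
Interpolating $(R+\sqrt{t-\tau})^{-4} \leq R^{-4+2\gamma}(t-\tau)^{-\gamma}$ for a free parameter $\gamma \in [0, 2]$, Young's inequality in time with $\frac{1}{\tilde s}+\frac{1}{q}=1+\frac{1}{s}$ produces a factor $T^{1-1/\tilde s+1/s-\gamma}$ as long as $\gamma < 1/q$, and discrete Young's over $k'$ via $1+\frac{1}{m}=\frac{1}{\tilde m}+\frac{1}{n}$ demands $(4-2\gamma)n > 3$. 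Setting $\beta := 1-\frac{1}{\tilde s}+\frac{1}{s}-\gamma$ rewrites this convergence constraint as $\beta > \alpha$; when $\alpha \geq 0$ and $1 < \tilde m < m < \infty$, the discrete Hardy--Littlewood--Sobolev inequality of Stein--Wainger permits the borderline $\beta = \alpha$.

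Summing the local and far-field contributions yields \eqref{eq-est-LF-youngs}; the $L^s_T E^p_m$ bound when $m \leq s$ follows from \eqref{ineq:embedding.parabolic.space}. Part (b) is immediate: under $\sigma = 0$ and $\alpha \leq 0$ (with $1 < \tilde m < m < \infty$ when $\alpha = 0$), the constants in \eqref{eq-est-LF-youngs} are bounded uniformly in $T$, so passing to $T \to \infty$ gives \eqref{eq-est-LF-infty}. The main obstacle is the careful bookkeeping of the two independent scaling parameters $\sigma$ and $\alpha$ via the single free parameter $\gamma$, so as to cover the full admissible range $\beta \in [0, 1-1/\tilde s + 1/s]$ with $\beta > \alpha$; the endpoint $\sigma = 0$ of Stokes maximal regularity is delicate and requires $1 < \tilde s < s < \infty$, as stipulated in the hypothesis.
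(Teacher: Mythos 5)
Your decomposition and overall strategy (local piece via a space--time convolution estimate on $B_4(k)$, far-field piece via the pointwise Oseen bound and a discrete convolution over $\ZZ^3$) match the paper's proof, but there is a genuine gap in the far-field piece. After reducing to the time kernel $(|k'|+\sqrt{t-\tau})^{-4}$, you replace it by the pointwise bound $|k'|^{-4+2\gamma}(t-\tau)^{-\gamma}$ and apply Young in time. This forces $\gamma<1/q$ strictly, hence $\be=1/q-\gamma>0$, and for no choice of $\gamma$ is $\tau^{-\gamma}$ in $L^{q}(0,\infty)$ (it diverges at $0$ or at $\infty$). Consequently your argument cannot reach the endpoint $\be=0$ when $\al<0$, nor the borderline $\be=\al=0$, and part (b) does not follow: the assertion that ``the constants are bounded uniformly in $T$'' is precisely what the interpolation step destroys. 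The paper instead computes the $L^{s^*}(0,T)$ norm of the kernel exactly, obtaining $b_{k'}\lec |k'|^{-4+2/s^*}\bke{T/(T+|k'|^2)}^{1/s^*}$ with $1+\frac1s=\frac1{s^*}+\frac1{\td s}$; the second factor is $\le1$ uniformly in $T$ and, when $\al\le0$, is simply discarded, leaving $|k'|^{-4+2/s^*}$ which is already $\ell^n$-summable (or exactly critical for the discrete Hardy--Littlewood--Sobolev inequality when $\al=0$). A single power law cannot reproduce this because the kernel is bounded near $\tau=0$ (by $|k'|^{-4}$) and decays like $\tau^{-2}$ at infinity, and both regimes must be used at once.

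A secondary issue concerns the local piece when $\si>0$: you route through the critical ($\si=0$) estimate and then H\"older in time, but the critical estimate requires $1<\td s<s<\infty$, whereas the hypotheses allow $\td s=1$ when $\si>0$. The paper applies Young's convolution inequality in time directly to $\int_0^t(t-\tau)^{-\frac32(\frac1{\td p}-\frac1p)-\frac12}\norm{F(\tau)}_{L^{\td p}(B_4(k))}\,d\tau$, which covers $\td s=1$ and yields the factor $T^{\si}$ without the detour. (Attributing the $\si=0$ case to ``maximal regularity'' is also imprecise: it is a smoothing estimate with a gain of integrability, obtained from the Oseen kernel bound together with the Hardy--Littlewood--Sobolev inequality in time, which is exactly how the paper derives it.)
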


For our application to the Navier-Stokes equations \eqref{NS}, we will take $F=u \otimes u$, 
\[
\td p = \frac p2,  \quad \td s = \frac s2, \quad \td m = \max(1,\frac m2).
\]

\begin{proof}
Note that the first inequalities in \eqref{eq-est-LF-youngs} and \eqref{eq-est-LF-infty}
follow from \eqref{ineq:embedding.parabolic.space}.
For every $k\in\ZZ^3$ we may decompose for $x\in B_1(k)$,
\[
L(F)_i(x,t)= \int_0^t \int_{\R^3} \pd_l S_{ij}(x-y,t-\tau) [F_{lj}     \chi_{B_{4}(k)} + F_{lj} (1-\chi_{B_{4}(k)})](y,\tau)\,dy\,d\tau   =: F_1^k +F_2^k.
\]
By Oseen tensor estimates \eqref{Oseen.est} and Young's convolution inequality,
 for $1\le{\tilde{p}\le p}\le\infty$,%
\EQS{\label{est-u1.a}
\norm{F_1^k(\cdot,t)}_{L^p(B_1(k))} 
&\le \int_0^t \norm{\pd_l S_{ij}(\cdot,t-\tau) * ( F(\cdot,\tau)\chi_{B_4(k)})}_{L^p(\R^3)} \, d\tau\\
&\le \int_0^t \norm{\pd_l S_{ij}(\cdot,t-\tau) }_{L^r(\R^3)}\norm{F(\cdot,\tau)\chi_{B_4(k)}}_{L^{\tilde{p}}(\R^3)}\, d\tau\\
&\lec \int_0^t (t-\tau)^{-\frac32\bke{\frac1{\tilde{p}} - \frac1p} - \frac12} \norm{F(\cdot,\tau)}_{L^{\tilde{p}}(B_4(k))}\, d\tau,
}
where $\frac 1r=\frac 1p+1-\frac 1{\tilde p}$, and $r\ge 1$ due to $\td p \le p$. Note that the exponent $\frac32\bke{\frac1{\tilde{p}} - \frac1p} + \frac12 {= -\si + 1 - \bke{\frac1{\td s} - \frac1s} }\in (0,1)$ due to {$\si \ge 0$ with $\td s <s$ in the case of equality} in \eqref{sigmage0}.

If $\si=\frac12 - \frac32\bke{\frac1{\td p} - \frac1p} -\bke{\frac1{\td s}- \frac1s}>0$, we use the Young's convolution inequality for the right hand side of \eqref{est-u1.a} for $t\in[0,T]$ to get
\EQ{\label{est-u1-youngs}
\norm{F_1^k}_{L^s(0,T;L^p(B_1(k)))} 
\le C T^{\si} \norm{F}_{L^{\tilde{s}}(0,T; L^{\tilde{p}}(B_4(k)))}.
}
If $\si=0$ {and $1<\td s < s < \infty$}, we use the Hardy--Littlewood--Sobolev inequality for the right hand side of \eqref{est-u1.a} to get the same estimate \eqref{est-u1-youngs} with $\si=0$.
The constant $C$ in \eqref{est-u1-youngs} is independent of $T$.  
It follows that, for $m\geq \td m$ and $0<T<\infty$,
\EQ{\label{eq-est-F1k}
\norm{ \norm{F_1^k}_{L^s(0,T;L^p(B_1(k)))}  }_{\ell^m(k\in\ZZ^3)} 
\le C T^{\si} \bigg\|   	\|F\|_{L^{\td s}_TL^{\td p} ( B_4(k))}	\bigg\|_{\ell^{m}(k\in \ZZ^3)} \le C T^{\si} \|F\|_{E^{\td s,\td p}_{T,\td m}}.
}
If $\si=0$, the above estimate is uniform for all $T$ and we can take $T=\infty$.

The estimate for $F_2^k$ is more intricate but follows themes already explored in this paper and \cite{BT4}. Note that for $x\in B_1(k)$  and $t\in (0,T)$,
\EQS{\label{eq-F2k}
|F_2^k (x ,t)|&\lesssim \int_0^t \int_{|y|>2} \frac 1 {(|y|+\sqrt \tau)^4} |F(x-y,t-\tau)|\,dy\,d\tau
\\&\lesssim \sum_{|k'|\geq 1}\int_0^t \frac 1 {(|k'|+\sqrt \tau)^4}\int_{B_1(k')} |F(x-y,t-\tau)|\,dy\,d\tau
\\&\lesssim \sum_{|k'|\geq 1} \int_0^t \frac 1 {(|k'|+\sqrt \tau)^4}  \int_{B_2(k-k')} |F(z,t-\tau)|\,dz\,d\tau
\\&\lesssim \sum_{|k'|\geq 1} \int_0^t \frac 1 {(|k'|+\sqrt \tau)^4}\, \| F(t-\tau)\|_{L^{\td p} (B_2(k-\td k))}\, d\tau.
}

Using the above $L^\I$ estimate to bound $\norm{F_2^k(t)}_{L^p(B_1(k))}$ and then  
Young's convolution inequality for $t \in (0,T)$, we get
\EQ{\label{0123c}
a_k:=\norm{F_2^k}_{L^s(0,T;L^p(B_1(k)))}  \lec \sum_{|k'|\ge 1}b_{k'}   f_{k-k'} ,
}
where 
\[
b_{k'} =  \norm{\frac1{(|k'|+\sqrt \tau)^4}}_{L^{s^*}(0,T)}, \quad
f_k = \norm{F}_{L^{\td s}(0,T; L^{\td p}(B_2(k)))}, \quad
1+\frac1s = \frac1{s^*}+\frac1{\td s}.
\]
Note $1/s^* \in [1/2,{1]}$ since 
$\si \ge 0$ %
We may set $b_0=0$. For $k \not =0$, we have
\EQN{
b_k &= \bke{ \int_0^T \frac{d\tau}{(|k|+\sqrt\tau)^{4s^*}}}^{\frac1{s^*}}
=|k|^{-4+\frac2{s^*}}  \bke{ \int_0^{T/|k|^2} \frac{dt}{(1+\sqrt t)^{4s^*}}}^{\frac1{s^*}}.
}
The integral $\int_0^x \frac{dt}{(1+\sqrt t)^{4s^*}}$ is bounded by $1$ for $x\ge1$ and by $x$ for $x \in (0,1)$. Hence it is bounded by $x/(1+x)$ for $0<x<\infty$, and we get
\EQ{
\label{3.15.22.zb}
b_k \lec  |k|^{-4+\frac2{s^*}} \bke{\frac T{T+|k|^2}}^{\frac1{s^*}}
=  |k|^{-2- \bke{ \frac 2{\td s} - \frac 2s}} \bke{\frac T{T+|k|^2}}^{1- \bke{ \frac 1{\td s} - \frac 1s}}.
}

Let $\gamma\in [1,\infty]$ be defined by%
\[
1+ \frac 1m = \frac 1\gamma + \frac 1{\td m}.
\]
Note $b_k \le C T^{\frac1{s^*}} |k|^{-4}$ hence $b_k \in \ell^\gamma$. Applying Young's convolution inequality to \eqref{0123c}, 
we get $\norm{a}_{\ell^m} \lec \norm{ b}_{\ell^\gamma} \norm{ f}_{\ell^{\td m}}$ and hence
\EQ{\label{0123a}
\norm{a}_{\ell^m} \lec  T^{\be}\norm{ f}_{\ell^{\td m}}
}
with $\be=\frac1{s^*}$. By going back to \eqref{3.15.22.zb}, it is clear that, for certain parameters, the choice of $\be$ in \eqref{0123a} can be improved. We pursue this presently.

\medskip

{\bf Case 1}:  $\frac{2}{\td s} - \frac{2}s + \frac3{\td m} - \frac3m >1$. In this case we have
\EQ{\label{0123b}
2+ \bke{ \frac 2{\td s} - \frac 2s} >\frac3\gamma= 3 -  \bke{ \frac 3{\td m} - \frac 3m},
}
and, noting the second factor in \eqref{3.15.22.zb} is bounded by $1$,%
\[
\norm{ b}_{\ell^\gamma} \lec \norm{|k|^{-2- \bke{ \frac 2{\td s} - \frac 2s}} }_{\ell^\gamma} \lec 1.
\]
We get \eqref{0123a} with $\beta=0$.

\medskip  
{\bf Case 2}:  $\frac{2}{\td s} - \frac{2}s + \frac3{\td m} - \frac3m =1$. In this case we have equality in \eqref{0123b}, and hence
\[
 b_k \lec |k|^{-2- \bke{ \frac 2{\td s} - \frac 2s}}= |k|^{-3/\gamma} .
\]
Instead of Young's inequality, we use the discrete version of the Hardy--Littlewood--Sobolev inequality stated in \cite[Proposition (a)]{Stein-Wainger-discreteHLS} to obtain the same estimate \eqref{0123a} with $\be=0$.
For \cite[Proposition (a)]{Stein-Wainger-discreteHLS} we need to avoid end points, hence we assume $1<\td m<m <\infty$. That $3/\gamma<3$ is given by $\td m<m$. %

\medskip

{\bf Case 3}:  $\frac{2}{\td s} - \frac{2}s + \frac3{\td m} - \frac3m <1$. In this case we have inequality ``$<$'' in \eqref{0123b}, and cannot avoid $T$ dependence. If $ 1<\td m<m <\infty$, we choose 
\[
\al =  \frac12- \frac{1}{\td s} + \frac{1}s - \frac3{2\td m} + \frac3{2m} , \quad 0<\al < 1 -\frac{1}{\td s} + \frac{1}s. 
\]
Here, $1-\frac1{\td s} + \frac1s > 0$. In fact, the case $(\td s,s)=(1,\infty)$ is neither in Case 2 nor in Case 3 because $\frac2{\td s} - \frac2s + \frac3{\td m} - \frac3m \ge 2 $ when $\td s = 1$, $s=\infty$.
We have
\[
b_k \lec  |k|^{-2- \bke{ \frac 2{\td s} - \frac 2s}} \bke{\frac T{T+|k|^2}}^{\al} \lec |k|^{-3/\gamma} T^\al.
\]
By the discrete Hardy--Littlewood--Sobolev inequality in \cite{Stein-Wainger-discreteHLS} again,
we get \eqref{0123a} with $\be=\al$.

For all cases including the end points $1=\td m$, $\td m=m$ or $m=\infty$, we can choose $\be \in (\al, 1- \frac 1{\td s}+ \frac 1s]$, $\be\ge0$,
\EQ{\label{0124a}
b_k \lec  |k|^{-2- \bke{ \frac 2{\td s} - \frac 2s}} \bke{\frac T{T+|k|^2}}^{\be} \lec |k|^{-3/\gamma-2(\be-\al)} T^\be,
}
and use Young's inequality.

Combining \eqref{eq-est-F1k} and \eqref{0123a} for all 3 cases of $\al$, we get part (a). Part (b) is a consequence of part (a), when the constants do not depend on $T$ and we can send $T\to \infty$.
This completes the proof of the lemma.
\end{proof}

We finally give bilinear estimates in $L^{s}_T E^p_{m}$. On the right side we use $\norm{F}_{L^{\td s}_T E^{\td p}_{\td m}}$, not $\|F\|_{E^{\td s,\td p}_{T,\td m}}$ in Lemma \ref{lemma:duhamel.amalgam}.

\begin{lemma}\label{lem-bilinear-LsEpm}
Let $0 < T \le \infty$.
For $F: \R^3 \times (0,T)\to \R^{3\times 3}$ let $L(F)$ be defined by \eqref{LF.def}.
For $1\le\td p \le p \le\infty$, $1\le\td m\le m\le\infty$ and $1<\td s<s<\infty$,
if $\frac12-\frac32\bke{\frac1{\td p} - \frac1p} - \bke{\frac1{\td s} - \frac1s} = 0$ and $\frac12-\frac32\bke{\frac1{\td m} - \frac1m} - \bke{\frac1{\td s} - \frac1s} = 0$, then for a $T$-independent constant $C$,
\[
\norm{LF}_{L^{s}_T E^p_{m}} \le C \norm{F}_{L^{\td s}_T E^{\td p}_{\td m}}.
\]
\end{lemma}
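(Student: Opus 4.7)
The plan is to prove the estimate by combining the pointwise spatial bound \eqref{Wa-estimate-bilinear-2} with the one-dimensional Hardy--Littlewood--Sobolev inequality in time, exploiting the fact that the two equalities $\sigma=0$ and $\alpha=0$ make the spatial kernel scale exactly as a temporal Riesz potential of the correct order. This avoids the ball-by-ball decomposition used in Lemma~\ref{lemma:duhamel.amalgam} and keeps the $L^{\td s}_T E^{\td p}_{\td m}$ norm, rather than $E^{\td s,\td p}_{T,\td m}$, on the right-hand side.

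First I would verify that the hypotheses $\sigma=0=\alpha$ (with $d=3$) force the chain of equalities
\[
\frac{1}{\td p}-\frac{1}{p} \;=\; \frac{1}{\td m}-\frac{1}{m} \;=\; \frac{1}{3}-\frac{2}{3}\bke{\frac{1}{\td s}-\frac{1}{s}},
\]
so that the admissibility condition $\frac{1}{\td p}-\frac{1}{p}\le\frac{1}{\td m}-\frac{1}{m}$ required by \eqref{Wa-estimate-bilinear-2} holds (with equality) when one takes exponents $(\td q,q)=(\td m,m)$. Applying \eqref{Wa-estimate-bilinear-2} to the spatial integrand of the Duhamel-type formula \eqref{LF.def} for each fixed $\tau\in(0,t)$ and using Minkowski's integral inequality in $\tau$, I would arrive at
\[
\norm{L(F)(\cdot,t)}_{E^p_m} \;\le\; C\int_0^t (t-\tau)^{-\frac{3}{2}\bke{\frac{1}{\td p}-\frac{1}{p}}-\frac12}\,\norm{F(\cdot,\tau)}_{E^{\td p}_{\td m}}\,d\tau.
\]

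Next, the condition $\sigma=0$ rewrites the kernel exponent as $-\lambda$ with $\lambda=1-\bke{\frac{1}{\td s}-\frac{1}{s}}\in(0,1)$, i.e.\ the Riesz potential of order $\frac{1}{\td s}-\frac{1}{s}$ in time. Taking the $L^s_T$ norm in $t$ and applying the one-dimensional Hardy--Littlewood--Sobolev inequality to the scalar function $\tau\mapsto\norm{F(\cdot,\tau)}_{E^{\td p}_{\td m}}$ on $(0,T)$ (extended by zero to $(0,\infty)$) would then yield
\[
\norm{L(F)}_{L^s_T E^p_m} \;\le\; C\,\norm{F}_{L^{\td s}_T E^{\td p}_{\td m}},
\]
with $C$ independent of $T$ because the one-dimensional HLS constant depends only on the exponents. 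The hypothesis $1<\td s<s<\infty$ is precisely the non-endpoint assumption required for HLS. I do not anticipate any substantive obstacle: the ball decomposition and the discrete Young/HLS bookkeeping in $k$ that drive the proof of Lemma~\ref{lemma:duhamel.amalgam} are entirely avoided here because the stringent balance conditions $\sigma=0$ and $\alpha=0$ permit a single clean application of \eqref{Wa-estimate-bilinear-2} directly at the amalgam level, with the temporal factor carried cleanly by continuous HLS.
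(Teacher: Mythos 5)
Your proposal is correct and is essentially the paper's own proof: the paper likewise applies the amalgam-space decay estimate \eqref{Wa-estimate-bilinear-2} at each fixed $\tau$ (the two balance conditions forcing $\frac1{\td p}-\frac1p=\frac1{\td m}-\frac1m$ so the reduced form of Lemma \ref{Wa-estimate} applies with kernel exponent $\mu=1-\frac1{\td s}+\frac1s\in(0,1)$), and then concludes by the one-dimensional Hardy--Littlewood--Sobolev inequality in time, with a $T$-independent constant.
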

\begin{proof}
Denote $\mu= \frac12+\frac32\bke{\frac1{\td p} - \frac1p}= \frac12+\frac32\bke{\frac1{\td m} - \frac1m} =1-\frac1{\td s} + \frac1s$, $0<\mu<1$ since $1<\td s<s<\infty$. %
By Lemma \ref{Wa-estimate},
\EQN{
\norm{LF}_{L^{s}_T E^p_{m}} 
&= \norm{\int_0^t \norm{\int_{\R^3} \pd_lS_{ij}(x-y,t-\tau) F_{lj}(y,\tau)\, dy}_{E^p_m} d\tau}_{L^s_T}\\
&\lec \norm{\int_0^t \frac1{(t-\tau)^{\mu}} \norm{F(\cdot,\tau)}_{E^{\td p}_{\td m}} d\tau}_{L^s_T},
}
which is bounded by $\norm{F}_{L^{\td s}_T E^{\td p}_{\td m}}$ by the Hardy-Littlewood-Sobolev inequality.
\end{proof}

\subsection{Energy estimates}

In this subsection, we prove the energy estimates in Wiener amalgam spaces, which can be viewed as the $E^2_q$-version of \cite[Lemma 2.4]{KwTs}.
The energy estimates are used in Section \ref{sec:global-weak} to construct global-in-time weak solutions in $E^2_q$ for $1\le q<2$. Recall the $\ell^q$ local energy space ${\bf LE}_q(0,T)$ is defined 
by the norm \eqref{ETq.def}.

\begin{lemma}\label{KwTs-lem2.4}%

For any $T>0$ and $q\ge1$, if $f\in E^2_q$ and $F\in E^{2,2}_{T,q}$, {defined for $x\in \R^3$,}  then we have
for any $\be>0$, 
\EQ{\label{eq-KwTs-lem2.4-1}
\norm{e^{t\De} f}_{{\bf LE}_q(0,T)} \lec (1+T^\be) \norm{f}_{E^2_q},
}
and
\EQ{\label{eq-KwTs-lem2.4-2}
\norm{\int_0^t e^{(t-\tau)\De} \mathbb P\nb\cdot F(\tau)\, d\tau}_{{\bf LE}_q(0,T)} \lec (1+T^\beta) \norm{F}_{E^{2,2}_{T,q}}.
}
\end{lemma}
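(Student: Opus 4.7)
The plan is to follow the splitting approach already used for Lemmas \ref{Wa-estimate} and \ref{lemma:duhamel.amalgam}, replacing the $L^p$-mapping estimates of the kernels by a combination of (i) a standard $L^2$-energy identity on the \emph{local} piece and (ii) pointwise Gaussian / Oseen-tensor decay on the \emph{nonlocal} piece. The $\ell^q$ sum over the cubes $B_1(k)$ is then recovered by Young's convolution inequality on $\ZZ^3$, which is available for every $q\in[1,\infty]$ and in particular for the new range $q<2$.

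\textbf{Proof of \eqref{eq-KwTs-lem2.4-1}.} Fix $k\in\ZZ^3$ and split $f=g_1^k+g_2^k$ with $g_1^k=f\chi_{B_4(k)}$ and $g_2^k=f(1-\chi_{B_4(k)})$, and write $v_i^k=e^{t\De}g_i^k$. The standard energy identity for the heat equation on $\R^3$ gives
\[
\norm{v_1^k}_{L^\infty_TL^2(\R^3)}^2+2\norm{\nb v_1^k}_{L^2_TL^2(\R^3)}^2=\norm{g_1^k}_{L^2}^2=\norm{f}_{L^2(B_4(k))}^2.
\]
For $v_2^k$, a Hölder step in $y$ together with the Gaussian estimate as in the first proof of Lemma \ref{Wa-estimate} yields, for $x\in B_1(k)$ and any $h\in\{0,1\}$,
\[
|\nb^h v_2^k(x,t)|\lec\sum_{|k'|\ge1}t^{-\frac32-\frac{h}{2}}|k'|^h e^{-|k'|^2/(8t)}\norm{f}_{L^2(B_1(k-k'))}.
\]
Taking $L^\infty_T L^2(B_1(k))$ (for $h=0$) and $L^2_T L^2(B_1(k))$ (for $h=1$) and using $\sup_{0<t<T}t^{-a}e^{-|k'|^2/(8t)}\lec\min(|k'|^{-2a},T^\be|k'|^{-2a-2\be})$, we obtain a discrete convolution
\[
\norm{v_2^k}_{L^\infty_TL^2(B_1(k))}+\norm{\nb v_2^k}_{L^2_TL^2(B_1(k))}\lec\sum_{|k'|\ge1}h_{T,k'}\norm{f}_{L^2(B_1(k-k'))},
\]
with $\{h_{T,k'}\}\in\ell^1(\ZZ^3)$ and $\norm{h_{T,\cdot}}_{\ell^1}\lec 1+T^\be$ for any $\be>0$ (the exponential decay trivially dominates any polynomial weight). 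Taking $\ell^q$-norms in $k$ and applying Young's convolution inequality $\ell^q*\ell^1\hookrightarrow\ell^q$, which requires only $q\ge1$, gives \eqref{eq-KwTs-lem2.4-1}.

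\textbf{Proof of \eqref{eq-KwTs-lem2.4-2}.} Let $u(t)=\int_0^te^{(t-\tau)\De}\mathbb P\nb\cdot F(\tau)\,d\tau$, so that $u$ is the mild solution of the non-stationary Stokes system on $\R^3$ with force $\nb\cdot F$ and zero initial data. Split $F$ as above and write $u=u_1^k+u_2^k$. For $u_1^k$, the standard Stokes energy inequality (multiply the equation by $u_1^k$, use $\div u_1^k=0$ to annihilate the pressure, and absorb $\nb\cdot(F\chi_{B_4(k)})$ by Young's inequality) yields
\[
\norm{u_1^k}_{L^\infty_TL^2(\R^3)}^2+\norm{\nb u_1^k}_{L^2_TL^2(\R^3)}^2\lec\norm{F}_{L^2_TL^2(B_4(k))}^2.
\]
For $u_2^k$, we use the Oseen bounds $|\nb^h\pd_lS_{ij}(x,t)|\lec(|x|+\sqrt t)^{-4-h}$ (as in \eqref{eq-F2k}) to get, for $x\in B_1(k)$,
\[
|\nb^h u_2^k(x,t)|\lec\sum_{|k'|\ge1}\int_0^t\frac{\norm{F(\tau)}_{L^2(B_1(k-k'))}}{(|k'|+\sqrt{t-\tau})^{4+h}}\,d\tau.
\]
Taking the appropriate norm in $t$ and using Young's convolution in time exactly as in \eqref{0123c}--\eqref{3.15.22.zb} produces a convolution in $k$ with kernel whose $\ell^1$-norm is bounded by $C(1+T^\be)$ for any $\be>0$: for the $L^\infty_T L^2$ part this amounts to $\int_0^T(|k'|+\sqrt{\tau})^{-4}d\tau\lec|k'|^{-2}\min(1,T/|k'|^2)$, and for the $L^2_T L^2$ part the time kernel $(|k'|+\sqrt{t-\tau})^{-5}$ enjoys even better summability. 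Applying Young's $\ell^q*\ell^1\hookrightarrow\ell^q$ again gives \eqref{eq-KwTs-lem2.4-2}.

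\textbf{Main obstacle.} The delicate point is the nonlocal term in \eqref{eq-KwTs-lem2.4-2}, where the only available decay is the polynomial Oseen bound. One must verify that after integrating in $\tau\in(0,T)$ the resulting sequence in $k'$ lies in $\ell^1(\ZZ^3)$ (and not merely in some $\ell^\gamma$ with $\gamma>1$), so that Young's convolution on $\ZZ^3$ closes the estimate for the full range $1\le q<\infty$, including the new range $q\in[1,2)$ relevant to Section~\ref{sec:global-weak}. This is precisely analogous to Case~1 of the proof of Lemma~\ref{lemma:duhamel.amalgam}, and works here because taking $\td p=\td m=p=m=2$, $\td s=2$, $s=\infty$ (or $s=2$ with one extra derivative) puts us strictly below the threshold of \eqref{0123b}, at the expense only of a harmless factor $(1+T^\be)$.
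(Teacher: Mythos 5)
Your architecture is the same as the paper's: over each cube $B_1(k)$, split the heat flow and the Duhamel term into a local piece (with data cut off to $B_4(k)$) and a far piece, use the standard $L^2$ energy estimate for the heat/Stokes flow on the local piece, and use Gaussian/Oseen pointwise decay plus Young's inequality $\ell^q*\ell^1\hookrightarrow\ell^q$ on $\ZZ^3$ for the far piece. (The paper handles the $E^{\infty,2}_{T,q}$ components by citing Lemmas \ref{lemma:linear.heat.amalgam.spacetime-new} and \ref{lemma:duhamel.amalgam} and runs the energy argument only for the gradient terms; your use of the full energy identity for both components of the local piece is a legitimate, and arguably cleaner, variant, since the time-convolution route for the $L^\infty_TL^2$ norm of the local Duhamel term sits at the endpoint $\si=0$, $s=\infty$.)

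There is, however, a concrete error in the far-field part of \eqref{eq-KwTs-lem2.4-2}. For the $L^\infty_TL^2(B_1(k))$ component you must convolve in time the kernel $K_{k'}(\sigma)=(|k'|+\sqrt\sigma)^{-4}$ against $\norm{F(\cdot)}_{L^2(B_2(k-k'))}$, which is controlled only in $L^2(0,T)$; Young's inequality with $s=\infty$ and $\td s=2$ forces $s^*=2$, so the relevant quantity is $\norm{K_{k'}}_{L^2(0,T)}\lec|k'|^{-3}\big(\tfrac{T}{T+|k'|^2}\big)^{1/2}\lec T^{\be}|k'|^{-3-2\be}$ for $0<\be\le\tfrac12$, which is $\ell^1$-summable with norm $\lec T^{\be}$. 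You instead compute $\norm{K_{k'}}_{L^1(0,T)}\lec|k'|^{-2}\min(1,T/|k'|^2)$, which (i) pairs with $F$ measured in $L^\infty_T$, not $L^2_T$, so it does not match the right-hand side $\norm{F}_{E^{2,2}_{T,q}}$, and (ii) has $\ell^1(\ZZ^3)$-norm of order $\sqrt{T}$ for large $T$, which is not $\lec 1+T^{\be}$ for small $\be>0$. Relatedly, your closing claim that these parameters lie strictly below the threshold of \eqref{0123b} is not right: with $\td s=2$, $s=\infty$, $\td m=m$ one has equality there, which is exactly why a $T^{\be}$ loss (for arbitrary $\be>0$, but not $\be=0$) appears in the statement. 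The $L^2_TL^2$ gradient term in the far field, with kernel $(|k'|+\sqrt\sigma)^{-5}$ and $s^*=1$, is handled correctly and matches the paper, as does all of \eqref{eq-KwTs-lem2.4-1}.
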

\begin{proof}
We first prove \eqref{eq-KwTs-lem2.4-1}.
By \eqref{lemma:linear.sotkes.amalgam.a} in Lemma \ref{lemma:linear.heat.amalgam.spacetime-new}, $\norm{f}_{E^{\infty,2}_{T,q}} \lec (1+T^\be) \norm{f}_{E^2_q}$ for any $\be>0$.
It remains to estimate $\norm{\nb e^{t\De} f}_{E^{2,2}_{T,q}}$.
The proof is almost identical to that of Lemma \ref{lemma:linear.heat.amalgam.spacetime-new} except that we use the energy estimate instead of Giga's estimate for the localized part.
For every $k\in\ZZ^3$ and $x\in B_1(k)$, we decompose
\[
\nb e^{t\De} f (x,t)= \nb e^{t\De} (f\chi_{B_4(k)}) + \nb e^{t\De} (f(1-\chi_{B_4(k)})) =: f_1^k(x,t) + f_2^k(x,t).
\]
Then
\EQN{
&\norm{\nb e^{t\De} f}_{E^{2,2}_{T,q}}
= \norm{ \norm{ \norm{ \nb e^{t\De} f }_{L^2(B_1(k))} }_{L^2(0,T)} }_{\ell^{q}(k\in\ZZ^3)}\\
&\le \norm{ \norm{ \norm{ f_1^k(\cdot,t) }_{L^2(B_1(k))} }_{L^2(0,T)} }_{\ell^{q}(k\in\ZZ^3)} + \norm{ \norm{ \norm{ f_2^k(\cdot,t) }_{L^2(B_1(k))} }_{L^2(0,T)} }_{\ell^{q}(k\in\ZZ^3)}\\
&=: A_1 + A_2.
}
For $A_1$, we use the usual energy estimate for the heat equation to get
\[
\norm{ \norm{f_1^k(\cdot,t)}_{L^2(B_1(k))} }_{L^2(0,T)} 
\le \norm{\nb e^{t\De}(f\chi_{B_4(k)})}_{L^2(0,\infty; L^{2}(\R^3))}\lec 
\norm{f}_{L^2(B_4(k))}.
\]
Hence
\EQ{\label{eq-KwTs-lem2.4-pf-1}
A_1 \le \norm{\norm{f}_{L^2(B_4(k))} }_{\ell^{q}(k\in\ZZ^3)} \approx \|f\|_{E^2_{q}}.
}

\medskip

For $A_2$, as
\[
|f^k_2(x,t)| \lec \sum_{k':\ |k'-k|{\ge} 4} \int_{B_1(k)} t^{-2} e^{- \frac {|x-y|^2}{8t}} |f(y)|\,dy,
\]
we have
\[
\norm{f^k_2(\cdot,t)}_{L^2(B_1(k))} \lec t^{-2} \sum_{|k'|\ge1} e^{-|k'|^2/(4t)} \norm{f}_{L^2(B_1(k'-k))}.
\]
Taking $L^2_T$ on both sides and applying Minkowski's integral inequality, we get
\EQN{
\norm{f^k_2}_{L^2_TL^2(B_1(k))} &\lec \bigg[ \int_0^T \bigg( t^{-2} \sum_{|k'|\ge1} e^{-|k'|^2/(8t)} \norm{f}_{L^2(B_1(k'-k))}\bigg) ^2 dt \bigg]^{1/2}\\
&\le \sum_{|k'|\ge1} \bke{\int_0^T \abs{t^{-2} e^{-|k'|^2/(8t)} \norm{f}_{L^2(B_1(k'-k))} }^2 dt}^{1/2}\\
&= \sum_{|k'|\ge1} \norm{f}_{L^2(B_1(k'-k))} \bke{\int_0^T t^{-4}\, e^{-\frac{|k'|^2}{4t} }\, dt}^{1/2},\quad t=|k'|^2 \tau\\
&= \sum_{|k'|\ge1} \norm{f}_{L^2(B_1(k'-k))} |k'|^{-3} \bke{\int_0^{\frac{T}{|k'|^2}} \tau^{-4}\, e^{-\frac1{4\tau} }\, d\tau}^{1/2}.
}
We have 
$
\int_0^\infty \tau^{-4}\, e^{-\frac1{4\tau} }\, d\tau < \infty
$
and,
 if $S<1$,
\[
\int_0^S \tau^{-4}\, e^{-\frac1{4\tau} }\, d\tau
\lec \int_0^S  e^{-\frac1{8\tau} }\, d\tau \lec e^{-\frac1{8S} }.
\]
This estimate is also true if $S>1$.
We conclude%
\[
\norm{f^k_2}_{L^2_TL^2(B_1(k))} \lec  \sum_{|k'|\ge1} \norm{f}_{L^2(B_1(k'-k))}{ |k'|^{-3}\,e^{-\frac {|k'|^2}{8T}}.}
\]

For fixed $0<T< \infty$, we can bound for any $\be \in [0,\oo)$ 
\[
{|k'|^{-3}\,e^{-\frac {|k'|^2}{8T}}} \lec |k'|^{-3} { \bigg(  \frac {|k'|^2}{8T}\bigg)^{-\be} =} |k'|^{-3-2\be} T^{\be}.
\]
By Young's convolution inequality,
\EQ{\label{eq-KwTs-lem2.4-pf-2}
A_2 \lec \norm{\sum_{|k'|\ge1} \norm{f}_{L^2(B_1(k'-k))}  |k'|^{-3-2\be} T^{\be} }_{\ell^{q}(k\in\ZZ^3)}
\lec \norm{f}_{E^2_q}\norm{|k|^{-3-2\be} T^{\be} }_{\ell^{1}(k\not=0\in\ZZ^3)},
}
where the last norm is finite since $\be>0$.
The estimate \eqref{eq-KwTs-lem2.4-1} follows from \eqref{eq-KwTs-lem2.4-pf-1} and \eqref{eq-KwTs-lem2.4-pf-2}. 

Next, we prove \eqref{eq-KwTs-lem2.4-2}. 
By \eqref{eq-est-LF-youngs} in Lemma \ref{lemma:duhamel.amalgam}, $\norm{\int_0^t e^{(t-\tau)\De}\mathbb P\nb\cdot F(\tau)\, d\tau}_{E^{\infty,2}_{T,q}} \lec (1+T^\be) \norm{F}_{E^{2,2}_{T,q}}$ for any $\be>0$.
It remains to estimate $\norm{\nb \int_0^t e^{(t-\tau)\De}\mathbb P\nb\cdot F(\tau)\, d\tau}_{E^{2,2}_{T,q}}$.
The proof is almost identical to that of Lemma \ref{lemma:duhamel.amalgam} except that we use the energy estimate instead of the Oseen's tensor estimate for the localized part.
For every $k\in\ZZ^3$ we may decompose for $x\in B_1(k)$,
\EQN{
&\nb \int_0^t e^{(t-\tau)\De}\mathbb P\nb\cdot F(\tau)\, d\tau \\
&= \nb \int_0^t e^{(t-\tau)\De}\mathbb P\nb\cdot (F(\tau) \chi_{B_{4}(k)}) \, d\tau + \nb \int_0^t e^{(t-\tau)\De}\mathbb P\nb\cdot \bkt{ F(\tau) (1- \chi_{B_{4}(k)}) } \, d\tau
=: F_1^k + F_2^k.
}
For $F_1^k$, by the usual energy estimate for the Stokes system we get  
\EQN{
\norm{ \norm{F_1^k(\cdot,t)}_{L^2(B_1(k))} }_{L^2(0,T)} 
&\le \norm{\nb \int_0^t e^{(t-\tau)\De}\mathbb P\nb\cdot (F(\tau) \chi_{B_{4}(k)}) \, d\tau}_{L^2_T L^{2}(\R^3)}\\
&\lec \norm{F}_{L^2_T L^2(B_4(k))}.
}
Hence
\EQ{\label{eq-KwTs-lem2.4-pf-3}
\norm{ \norm{ \norm{F_1^k(\cdot,t)}_{L^2(B_1(k))} }_{L^2(0,T)} }_{\ell^q(k\in\ZZ^3)}  \le \norm{ \norm{F}_{L^2_T L^2(B_4(k))} }_{\ell^{q}(k\in\ZZ^3)} \approx \|F\|_{E^{2,2}_{T,q}}.
}

For $F_2^k$, note that for $x\in B_1(k)$  and $t\in (0,T)$,
\EQS{
|F_2^k (x ,t)|&\lesssim \int_0^t \int_{|y|>2} \frac 1 {(|y|+\sqrt \tau)^5} |F(x-y,t-\tau)|\,dy\,d\tau
\\&\lesssim \sum_{|k'|\geq 1}\int_0^t \frac 1 {(|k'|+\sqrt \tau)^5}\int_{B_1(k')} |F(x-y,t-\tau)|\,dy\,d\tau
\\&\lesssim \sum_{|k'|\geq 1} \int_0^t \frac 1 {(|k'|+\sqrt \tau)^5}  \int_{B_2(k-k')} |F(z,t-\tau)|\,dz\,d\tau
\\&\lesssim \sum_{|k'|\geq 1} \int_0^t \frac 1 {(|k'|+\sqrt \tau)^5}\, \| F(t-\tau)\|_{L^{2} (B_2(k-\td k))}\, d\tau.
}

Using the above $L^\I$ estimate to bound $\norm{F_2^k(t)}_{L^2(B_1(k))}$ and then  
Young's convolution inequality for $t \in (0,T)$, we get
\EQ{\label{0123c-KwTs-lem2.4}
a_k:=\norm{F_2^k}_{L^2(0,T;L^2(B_1(k)))}  \lec \sum_{|k'|\ge 1}b_{k'}   \phi_{k-k'} ,
}
where 
\[
b_{k'} =  \norm{\frac1{(|k'|+\sqrt \tau)^5}}_{L^1(0,T)}, \quad
\phi_k = \norm{F}_{L^{2}(0,T; L^{2}(B_2(k)))}.
\]
We may set $b_0=0$. For $k \not =0$, we have
\EQN{
b_k &=  \int_0^T \frac{d\tau}{(|k|+\sqrt\tau)^{5}}
=|k|^{{-3}} \int_0^{T/|k|^2} \frac{dt}{(1+\sqrt t)^{5}}.
}
The integral $\int_0^x \frac{dt}{(1+\sqrt t)^5}$ is bounded by $1$ for $x\ge1$ and by $x$ for $x \in (0,1)$. Hence it is bounded by $x/(1+x)$ for $0<x<\infty$, and we get
\Eq{
b_k \lec  |k|^{{-3}} \frac T{T+|k|^2} {\lec  |k|^{{-3}} \bke{\frac T{|k|^2}}^{\min(1,\beta)}}.
}
{Hence $\norm{b_k}_{\ell_1} \lec 1+T^\beta$ for any $\beta>0$.}  
Applying Young's convolution inequality to \eqref{0123c-KwTs-lem2.4}
we get $\norm{a}_{\ell^q} \lec \norm{ b}_{\ell^1} \norm{ \phi}_{\ell^{q}}$ and hence 
\EQ{\label{0123a-KwTs-lem2.4}
\norm{ \norm{ \norm{F_2^k(\cdot,t)}_{L^2(B_1(k))} }_{L^2(0,T)} }_{\ell^q(k\in\ZZ^3)} = \norm{a}_{\ell^q} \lec  {(1+T^\beta)}\norm{ \phi}_{\ell^{q}}.
}
Combining \eqref{eq-KwTs-lem2.4-pf-3} and \eqref{0123a-KwTs-lem2.4}, \eqref{eq-KwTs-lem2.4-2} follows.
\end{proof}

\section{The Navier-Stokes equations in Wiener amalgam spaces}\label{sec.ns}

In this section we consider the nonlinear Navier-Stokes equations in space dimension $d=3$.

Recall the Picard contraction principle which states: If $E$ is a Banach space and $B: E\times E \to E$ is a bounded bilinear transform satisfying \EQ{
\label{contraction}
\| B(e,f)\|_{E}\leq C_B \| e\|_E\|f\|_E,
}
and if $\|e_0\|_{E}\le \de\le (4C_B)^{-1}$, then the equation $e = e_0 - B(e,e)$ has a solution with $\| e\|_{E}\leq 2 \delta$ and this solution is unique in $\overline B(0,2\delta)$.
 
For our application to Navier-Stokes equations, $e_0 = e^{t\Delta}u_0$ and 
the bilinear operator is
\EQ{ \label{Bfg.def}
B(f,g)_i(x,t) = \int_0^t
\int_{\R^3}  \pd_l S_{ij}(x-y,t-\tau) f_l g_j(y,\tau)\, dy \, d\tau,
}
which is just the vector components of $B(f,g)$ as defined in Section \ref{sec1.2}, in terms of the Oseen tensor $S_{ij}$.

\subsection{Mild solutions in subcritical spaces}

We first prove Theorem \ref{thrm.subcritical} for subcritical data $u_0 \in E^r_q$, $3<r\le \infty$.
 
 \begin{proof}[Proof of Theorem \ref{thrm.subcritical}]
 Let 
\[
\norm{f}_{\mathcal E_T} = \sup_{0\le t\le T} \norm{f(t)}_{E^r_q}.
\]
By Lemma \ref{Wa-estimate} we have 
\[
\|e^{t\Delta}u_0\|_{E^r_q}\lec \| u_0\|_{E^r_q}.
\]
So
\EQ{\label{eq-linear_mE_T}
\norm{e^{t\De} u_0}_{\mathcal E_T} \le C_1 \| u_0\|_{E^r_q}.
}
For bilinear estimate, we have by Lemma \ref{Wa-estimate}
\EQN{
\| B(f,g)  (t)\|_{E^r_q} 
&\lec \int_0^t \bigg(\frac 1 {(t-\tau)^{\frac 1 2+\frac 3 {2r}}   }  +\frac 1 {(t-\tau)^{\frac 1 2 }   }\bigg)
 \| (f\otimes g)(\tau)\|_{E^{r/2}_q}\, d\tau\\
&\lec (   t^{1/2 - 3/(2r)} +  t^{1/2}	 ) \sup_{0<\tau<t} \| f(\tau)\|_{E^r_q}
\sup_{0<\tau<t} \|g(\tau)\|_{E^r_\infty}\\
&\lec (   t^{1/2 - 3/(2r)} +  t^{1/2}	 ) \sup_{0<\tau<t} \| f(\tau)\|_{E^r_q}
\sup_{0<\tau<t} \|g(\tau)\|_{E^r_q}\\
&\lec (   t^{1/2 - 3/(2r)} +  t^{1/2}	 ) \| f\|_{\mathcal E_T}\|g\|_{\mathcal E_T},
}
where we've used the inclusion $E^r_q\subset E^r_\infty$ in the second to last inequality.
Therefore, we have 
\EQ{\label{eq-bilinear_mE_T}
\norm{B(f,g)}_{\mathcal E_T} \le C_2 (   T^{1/2 - 3/(2r)} +  T^{1/2}	 ) \| f\|_{\mathcal E_T}\|g\|_{\mathcal E_T}.
}
 We seek a solution of the form 
\[
u = e^{t\Delta}u_0 - B(u,u).
\]
Assume $T$ is chosen small enough that $ \norm{u_0}_{E^r_q} < (8C_1C_2(   T^{1/2 - 3/(2r)} +  T^{1/2}	 ))^{-1}$. Then the Picard contraction principle implies there exists a unique strong mild solution satisfying 
\EQ{\label{eq-picard-uniqueclass}
\norm{u}_{\mathcal E_T} \le 2C_1\norm{u_0}_{E^r_q}.
}

We now prove the various statements in the theorem concerning continuity. Assume $r,q<\I$. Then, 
\EQS{
\| u(t) - u_0\|_{E^r_q} &\leq \| B(u,u)(t) \|_{E^r_q} + \| e^{t\Delta}u_0 - u_0\|_{E^r_q}\\
&\lec(   t^{1/2 - 3/(2r)} +  t^{1/2 }	 ) \sup_{0<\tau<t} \| u(\tau)\|_{E^r_q} \sup_{0<\tau<t} \| u(\tau) \|_{E^r_\infty} +\| e^{t\Delta}u_0 - u_0\|_{E^r_q}\\
&\le(   t^{1/2 - 3/(2r)} +  t^{1/2 }	 ) \sup_{0<\tau<t} \| u(\tau)\|_{E^r_q} \sup_{0<\tau<t} \| u(\tau) \|_{E^r_q} +\| e^{t\Delta}u_0 - u_0\|_{E^r_q}.
}
Since the powers on $t$ are positive in the first term and since the latter term vanishes
by Lemma \ref{lemma:heat.eq.convergence.data}, we have 
\[
\| u(t) - u_0\|_{E^r_q} \to 0 \text{ as }t\to 0^+.
\]
The continuity at $t\in (0,T)$ can be shown as usual, see e.g., \cite[lines 3-8, page 86]{Tsai-book}, including $r=\I$ or $q=\I$. Compare the proof of Theorem \ref{thrm:critical}.

If either $r=\I$ or $q=\I$ we no longer have $\| e^{t\Delta}u_0 - u_0\|_{E^r_q}\to 0$ but we do still have 
\[
(   t^{1/2 - 3/(2r)} +  t^{1/2  }	 ) \sup_{0<\tau<t} \| u(\tau)\|_{E^r_q}\sup_{0<\tau<t}  \| u(\tau) \|_{E^r_q}  \to 0.
\]
Hence, we still have
\[
\| u (t)-e^{t\Delta}u_0 \|_{E^r_q}\to 0\text{ as }t\to 0^+,
\]
as asserted in the theorem.

Next we prove the uniqueness of the mild solution in $L^\infty(0,T;E^r_q) \cap C((0,T);E^r_q)$ without the bound \eqref{eq-picard-uniqueclass}. Let $u_1, u_2\in L^\infty(0,T; E^r_q) \cap C((0,T);E^r_q)$ be two mild solutions with initial data $u_0\in E^r_q$. Then for $0<t<T'\le T$
\EQN{
\norm{(u_1 - u_2)(t)}_{E^r_q} 
&\le \norm{B(u_1 - u_2, u_2)(t)}_{E^r_q} + \norm{B(u_2, u_1 - u_2)(t)}_{E^r_q} \\
&\lec (   t^{1/2 - 3/(2r)} +  t^{1/2})  \bke{\norm{u_1}_{\mathcal E_T}+\norm{u_2}_{\mathcal E_T}}
\sup_{0<t<T'} \| (u_1 - u_2)(t)\|_{E^r_q}
,
}
so that we have
\[
\sup_{0<t<T'} \| (u_1 - u_2)(t)\|_{E^r_q}
\lec (   T'^{\frac12 - \frac3{2r}} +  T'^{\frac12}) \bke{\norm{u_1}_{\mathcal E_T}+\norm{u_2}_{\mathcal E_T}}
\sup_{0<t<T'} 
\| (u_1 - u_2)(t)\|_{E^r_q}.
\]
Thus, for sufficiently small $T'>0$ it follows that $u_1 = u_2$ on $(0,T')$. Repeating this argument, we see that $u_1 = u_2$ on $(0,T)$.

To obtain the spacetime integral bound we assume $3<r\le s\le \infty$, $r\le p<\infty$, 
$\frac2s + \frac3p = \frac3r$ and $1\le q=m \le\infty$, and perform the Picard iteration in the Banach space
\[
X_T = \mathcal E_T \cap E^{s,p}_{T,q}.
\]
We may assume $m=q$ since $\norm{u}_{E^{s,p}_{T,m}}\le \norm{u}_{E^{s,p}_{T,q}}$ for $m\ge q$. %
From 
$\frac3p = \frac3r-\frac2s  \ge \frac3r-\frac2r$ we get $p \le 3r<\infty$.
For the linear term, by \eqref{eq-linear_mE_T} and Lemma \ref{lemma:linear.heat.amalgam.spacetime-new} (which needs $r<\infty$ and $r \le s$), we have for a fixed $\epsilon>0$ that
\EQN{
\norm{e^{t\De} u_0}_{X_T} = \norm{e^{t\De} u_0}_{\mathcal E_T} + \norm{e^{t\De} u_0}_{E^{s,p}_{T,q}} \le C_3(1+T^{1/s+\epsilon})\norm{u_0}_{E^r_q}.
}
For the bilinear term, by \eqref{eq-bilinear_mE_T} and Lemma \ref{lemma:duhamel.amalgam}
with $\td p = p/2$ and $\td s = s/2$ so that $\si = \frac12 - \frac3{2r}>0$ due to $r>3$, (allowing $s=\infty$),
\EQN{
\norm{B(f,g)}_{X_T} &= \norm{B(f,g)}_{\mathcal E_T} + \norm{B(f,g)}_{E^{s,p}_{T,q}}\\
&\le C_4 \bkt{ \bke{T^{\frac12-\frac3{2r}} + T^{\frac12}} \norm{f}_{\mathcal E_T} \norm{g}_{\mathcal E_T} + \bke{T^{\frac12 - \frac3{2 r}} + T^{1-\frac1s }} \norm{f\otimes g}_{E^{\frac{s}2,\frac{p}2}_{T,q}} }.
}
Since%
\[
\norm{f\otimes g}_{E^{\frac{s}2,\frac{p}2}_{T,q}} 
\le \norm{f}_{E^{s,p}_{T,q}} \norm{g}_{E^{s,p}_{T,\infty}}
\le \norm{f}_{E^{s,p}_{T,q}} \norm{g}_{E^{s,p}_{T,q}},
\]
we derive%
\[
\norm{B(f,g)}_{X_T} \le 2C_4 \bke{T^{\frac12-\frac3{2r}} + T^{1 - \frac1s}} \norm{f}_{X_T} \norm{g}_{X_T}.
\]
If {$T>0$} is chosen small enough so that $ \norm{u_0}_{E^r_q} < [8C_3 C_4 (1 + T^{\frac1s+\epsilon})(T^{\frac12-\frac3{2r}} + T^{1 - \frac1s})]^{-1}$, then the Picard contraction principle implies there exists a unique strong mild solution $\td u\in X_T$. Since $X_T\subset L^\infty_T E^r_q$, we have $\td u = u$ on $(0,T)$ by the uniqueness in $L^\infty(0,T;E^r_q)$. Thus, $u$ satisfies the spacetime integral bound.
\end{proof}

\subsection{Mild solutions in critical spaces with small data}

We next prove Theorem \ref{thrm:critical} for small critical data $u_0 \in E^3_q$, $1\le q\le \infty$. For simplicity of presentation, we first consider finite time exponent $s<\infty$ in the spacetime integral estimates \eqref{eq1.7} in $E^{s,p}_{T,m}$, and postpone the case $s=\infty$ (hence $p=3$) after we have proved Lemmas \ref{lem-new-linear} and \ref{lem-new-bilinear}.

\begin{proof}[Proof of Theorem \ref{thrm:critical} when $s<\infty$] 
The overall logic of the proof follows that in \cite{MaTe} which, in turn, draws on \cite{Kato}. Let
\[
\norm{f}_{\tilde{\mathcal{E}}_T} 
:= \sup_{0<t<T} \norm{f(\cdot,t)}_{E^3_q}    + \sup_{0<t<T} t^{\frac12} \norm{f(\cdot,t)}_{E^\I_q}
\]
and
\[ 
\norm{f}_{\tilde{\mathcal{F}}_T} 
:= \sup_{0<t<T} t^{\frac14} \norm{f(\cdot,t)}_{E^6_q}.
\]
It is obvious that $\tilde{\mathcal{E}}_T \subset \tilde{\mathcal{F}}_T$. Indeed, 
\EQN{
t^{\frac14} \norm{f}_{ E^6_q}
&= t^{\frac14} \norm{ \norm{f(\cdot,t)}_{L^6(B_1(k))} }_{\ell^q(k\in\ZZ^3)} \\
&\le t^{\frac14}  \norm{ \norm{f(\cdot,t)}_{L^\infty(B_1(k))}^{1/2} \norm{f(\cdot,t)}_{L^3(B_1(k))}^{1/2} }_{\ell^q(k\in\ZZ^3)}
\\
&\le t^{\frac14} \norm{ \norm{f(\cdot,t)}_{L^\infty(B_1(k))}^{1/2} }_{\ell^{2q}(k\in\ZZ^3)} \norm{ \norm{f(\cdot,t)}_{L^3(B_1(k))}^{1/2} }_{\ell^{2q}(k\in\ZZ^3)} 
\\
&= (t^{\frac12}  \norm{f(\cdot,t)}_{E^\infty_q})^{1/2} \norm{f(\cdot,t)}_{E^3_q}^{1/2}.
}
Now, we define
\EQ{
\mathcal{E}_T := \tilde{\mathcal{E}}_T \cap E^{s,p}_{T,m} \qquad \text{ and } \qquad 
\mathcal{F}_T := \tilde{\mathcal{F}}_T \cap E^{s,p}_{T,m} ,
}
with norms
\[
\norm{f}_{\mathcal{E}_T} := \norm{f}_{\tilde{\mathcal{E}}_T} + \norm{f}_{E^{s,p}_{T,m}}\qquad \text{ and } \qquad 
\norm{f}_{\mathcal{F}_T} := \norm{f}_{\tilde{\mathcal{F}}_T} + \norm{f}_{E^{s,p}_{T,m}},
\]
respectively. We also have $\mathcal{E}_T\subset\mathcal{F}_T$ by intersecting $E^{s,p}_{T,m}$ with both sides of the inclusion.

By \eqref{lemma:linear.sotkes.amalgam.a} in Lemma \ref{lemma:linear.heat.amalgam.spacetime-new} we have for a fixed $\epsilon>0$ chosen so that $\epsilon >\frac 3{2m}-\frac 3 {2q}$ from the definition of $\beta$ in Lemma \ref{lemma:linear.heat.amalgam.spacetime-new},
\EQ{
\| e^{t\Delta}u_0\|_{E^{s,p}_{T,m}}
\lec (1+T^{\frac1 s +\epsilon}) \|u_0\|_{E^3_q}.
}
Also,  by Lemma \ref{Wa-estimate}, 
\EQ{
t^{\frac {1}{4}}  \| e^{t\Delta}u_0\|_{E^6_q} 
\lec  (1+ T^{ \frac 1 4}) \|u_0\|_{E^3_q}.
}
Hence
\EQ{\label{3.21a}
\| e^{t\Delta}u_0\|_{\mathcal F_T} \le C_1( 1 + T^{\frac1 s +\epsilon} +T^{\frac {1}4}   ) \| u_0\|_{E^3_q}.
}

For the bilinear term, using $s<\infty$ and Lemma \ref{lemma:duhamel.amalgam} with $\tilde{s}=s/2$, $\td p = p/2$, and $\td m = m$ (so that $\si=0$), we obtain
\EQS{\label{eq-0321-a}
\norm{B(f,g)}_{E^{s,p}_{T,m}} 
\le C(1+T^{1 - \frac 1s}) \norm{f\otimes g}_{E^{\frac s 2,\frac p 2}_{T,m}}
&\le C(1+T^{1 - \frac 1s}) \norm{f}_{E^{s,p}_{T,m}} \norm{g}_{E^{s,p}_{T,\infty}}\\
&\le C(1+T^{1 - \frac 1s}) \norm{f}_{E^{s,p}_{T,m}} \norm{g}_{E^{s,p}_{T,m}},
}
by H\"{o}lder's inequality and we have used the inclusion $E^{s,p}_{T,m} \subset E^{s,p}_{T,\infty}$ in the last inequality. 
Also, by Lemma \ref{Wa-estimate} and H\"older inequality \eqref{Holder}%
\EQN{ 
\norm{B(f,g)}_{E^6_q}(t) 
&\lec \int_0^t \bke{\frac1{(t-\tau)^{\frac12}} + \frac1{(t-\tau)^{\frac34}}} \norm{(f\otimes g)(\tau)}_{E^3_{q}}\, d\tau\\
&\le \int_0^t \bke{\frac1{(t-\tau)^{\frac12}} + \frac1{(t-\tau)^{\frac34}}} \norm{f(\tau)}_{E^6_q} \norm{g(\tau)}_{E^6_q}\, d\tau\\
&\lec (1+t^{-1/4}) \norm{f}_{\tilde{\mathcal{F}}_T} \norm{g}_{\tilde{\mathcal{F}}_T}.
}
Hence
\[
\| B(f,g)\|_{\mathcal F_T} \le C_2 (1+T^{\frac 1 4}+T^{1 - \frac 1s}  )  \|f\|_{\mathcal F_T} \|g\|_{\mathcal F_T}.
\]
By \eqref{3.21a}, taking $\|u_0\|_{E^3_q}$ small it is possible to ensure 
\EQ{
\| e^{t\Delta}u_0\|_{\mathcal F_T}  \leq \big[  4C_2 (   1+T^{1/4}+T^{1 - \frac 1s} ) 	  \big]^{-1}.
}
The Picard contraction theorem then guarantees the existence of a mild solution $u$ to \eqref{NS} 
so that
\[
\| u\|_{\mathcal F_T} \leq  2\| e^{t\Delta}u_0\|_{\mathcal F_T}.
\]
This solution is unique among all mild solutions $v$ with data $u_0$ satisfying $\|v\|_{\mathcal F_T}\le 2\| e^{t\Delta}u_0\|_{\mathcal F_T}$.
In fact, since we can also apply the Picard contraction to $\td {\mathcal F}_T$, the solution is also unique in the class $\|v\|_{\td {\mathcal F}_T}\leq 2\| e^{t\Delta}u_0\|_{\td {\mathcal F}_T}$.

Next, we show that a solution $u\in \mathcal{F}_T$ with small enough initial data $u_0\in E^3_q$ also belongs to $\mathcal{E}_T$. Let $\{u^{(n)}\}_{n\ge1}$ be the Picard iteration sequence in $\mathcal{F}_T$. 
By construction, 
\[
\norm{u^{(n)}}_{ {\mathcal{F}}_T}  \le  2C_1 ( 1 + T^{\frac1 s +\epsilon} +T^{\frac {1}4}   ) \| u_0\|_{E^3_q}< 1.
\]

Note that
\[
\norm{u^{(n)}}_{\td{\mathcal{E}}_T} 
\le \norm{e^{t\De} u_0}_{\td{\mathcal{E}}_T} + \norm{B(u^{(n-1)}, u^{(n-1)})}_{\td{\mathcal{E}}_T}.
\]
By Lemma \ref{Wa-estimate},
\[
\norm{e^{t\De} u_0}_{\td{\mathcal{E}}_T} \le C(1+T^{1/2})  \| u_0\|_{E^3_q}.
\]
As is usual in arguments like this, we now seek estimates for $B(f,g)$ in $\td{\mathcal{E}}_T$ in terms of measurements of $f$ and $g$ in $\td{\mathcal{F}}_T$ and $\td{\mathcal{E}}_T$. 
We have by Lemma \ref{Wa-estimate} and H\"older inequality \eqref{Holder},\EQS{\label{ineq:E3qintegral}
\| B(f,g)\|_{E^3_q}&\lec \int_0^t \bigg(\frac 1 {(t-\tau)^{\frac 1 2  }}+\frac 1 {(t-\tau)^{\frac 3 4  }} \bigg)\|(f\otimes g)(\tau)\|_{E^2_q}\,d\tau\\
&\le \int_0^t \bigg(\frac 1 {(t-\tau)^{\frac 1 2  }}+\frac 1 {(t-\tau)^{\frac 3 4  }} \bigg)  \|f(\tau)\|_{E^3_q}\|g(\tau)\|_{E^6_q}   \,d\tau\\
&\lec 	(1+T^{1/4})			\|f\|_{\td{\mathcal{E}}_T}\|g\|_{\td{\mathcal{F}}_T} ,
}
and
\EQN{
t^{\frac 1 2}\| B(f,g)\|_{E^\I_q} 
&\lec t^{\frac12}\int_0^t \bigg(\frac 1 {(t-\tau)^{\frac 1 2  }}+\frac 1 {(t-\tau)^{\frac 3 4  }} \bigg) \| (f\otimes g)(\tau)\|_{E^6_q} \,d\tau\\
&\le t^{\frac12} \int_0^t \bigg(\frac 1 {(t-\tau)^{\frac 1 2  }}+\frac 1 {(t-\tau)^{\frac 3 4  }} \bigg)  \|f(\tau)\|_{E^\infty_q}\|g(\tau)\|_{E^6_q}   \,d\tau\\
&\lec (1+T^{1/4}) \|f\|_{\td{\mathcal{E}}_T}\|g\|_{\td{\mathcal{F}}_T} .
}
By switching $f,g$ in the estimates, %
\[
\| B(f,g)\|_{\td{\mathcal E}_T} 
\lec (1 + T^{1/4}) \min \bke{ \| f\|_{\td{\mathcal E}_T}\|g\|_{\td{\mathcal F}_T},\, \| g\|_{\td{\mathcal E}_T}\|f\|_{\td{\mathcal F}_T}}. 
\]

Returning to our main goal, we have now that 
\EQN{
\norm{u^{(n)}}_{\td{\mathcal E}_T} 
&\le \norm{e^{t\De} u_0}_{\td{\mathcal E}_T} + \norm{B(u^{(n-1)}, u^{(n-1)})}_{\td{\mathcal E}_T} \\
&\le C_T  \| u_0\|_{E^3_q}  +  C_T' \norm{u^{(n-1)}}_{\td{\mathcal F}_T} \norm{u^{(n-1)}}_{\td{\mathcal E}_T}\\
&\le C_T  \| u_0\|_{E^3_q}  +  C_T' C_T''  \| u_0\|_{E^3_q} \norm{u^{(n-1)}}_{\td{\mathcal E}_T}.
}
Thus, if $\| u_0\|_{E^3_q}  \le (2  C_T' C_T'')^{-1}$, then $\norm{u^{(n)}}_{\td{\mathcal E}_T} 
$ is uniformly bounded by $2C_T  \| u_0\|_{E^3_q} $.
We now obtain the inclusion $u\in \td{\mathcal E}_T$ as follows:
\EQS{\label{ineq:difference.picard.iterates}
\norm{u^{(n+1)} - u^{(n)}}_{\td{\mathcal E}_T} 
&= \norm{B(u^{(n)},u^{(n)}) - B(u^{(n-1)},u^{(n-1)})}_{\td{\mathcal E}_T} \\
&\le \norm{B(u^{(n)}-u^{(n-1)},u^{(n)})}_{\td{\mathcal E}_T} + \norm{B(u^{(n-1)},u^{(n)}-u^{(n-1)})}_{\td{\mathcal E}_T}\\
&\lec \norm{u^{(n)} - u^{(n-1)}}_{\td{\mathcal F}_T} \bke{\norm{u^{(n)}}_{\td{\mathcal E}_T} + \norm{u^{(n-1)}}_{\td{\mathcal E}_T} }
\\&\lec \norm{u^{(n)} - u^{(n-1)}}_{\td{\mathcal E}_T} \bke{\norm{u^{(n)}}_{\td{\mathcal E}_T} + \norm{u^{(n-1)}}_{\td{\mathcal E}_T} }.
}
In particular, the above implies $\{u^{(n+1)}-u^{(n)}\}$ is Cauchy in $\td{\mathcal E}_T$ and hence the limit of $\{u^{(n)}\}$ is in $\mathcal E_T$ where we're noting that convergence in the spacetime norm in the definition of $\mathcal E_T$ is already implied by convergence in $\mathcal F_T$.

When $q \le s$, note that 
\[
\norm{u}_{L^s_T E^p_m} \le \norm{u}_{L^s_T E^p_q} \le \norm{u}_{E^{s,p}_{T,q}},
\]
using \eqref{ineq:embedding.parabolic.space} and $q \le s$ for the second inequality. This shows the $L^s_T E^p_m$-estimate in \eqref{eq1.7}.

We now address convergence to the initial data when $q<\I$.
By Lemma \ref{lemma:heat.eq.convergence.data} we have
\EQ{\label{limit:zero.heat}
\lim_{T'\to 0^+} \sup_{0<t<T'}t^{\frac 1 4} \| e^{t\Delta}u_0\|_{E^6_q} = \lim_{T'\to 0} \| e^{t\Delta}u_0\|_{\td{\mathcal F}_{T'}} = 0,
}
whenever $u_0\in E^3_q$. 
We extend this property to all Picard iterates via induction. Our base case is \eqref{limit:zero.heat} where we note that $u^{(0)}$ can be viewed as $e^{t\Delta}u_0$. 
Our inductive hypothesis is that 
\EQ{\label{limit:inductiveHyp}
\lim_{T'\to 0} \|u^{(n-1)}\|_{\td{\mathcal F}_{T'}} = 0.
}
By our above estimates in the class $\td {\mathcal F}_{T'}$ where we are taking $T'\leq T$, we have
\EQN{\label{inductiveStep}
\| u^{(n)}\|_{\td {\mathcal F}_{T'}} \leq \|e^{t\Delta} u_0\|_{\td {\mathcal F}_{T'}}+ \| B(u^{(n-1)},u^{(n-1)})\|_{\td {\mathcal F}_{T'}}\lec \|e^{t\Delta} u_0\|_{\td {\mathcal F}_{T'}}+\| u^{(n-1)}\|_{\td {\mathcal F}_{T'}}^2. 
}
Hence
\EQ{\label{limit:Picard}
\lim_{T'\to 0} \|u^{(n)}\|_{\td{\mathcal F}_{T'}} = 0,
}
 by \eqref{limit:zero.heat} and \eqref{limit:inductiveHyp}. This completes the induction.

The limit \eqref{limit:zero.heat}, convergence of the Picard iterates in $\td {\mathcal F}_T$ and \eqref{limit:Picard} imply that, by taking $T'$ small, we can make $\sup_{0<t<T'}t^{\frac 1 4} \| u(t)\|_{E^6_q}$ small. To elaborate,  we have 
\EQ{
\| u\|_{\td {\mathcal F}_{T'}} \leq \| u - u^{(n)}\|_{\td {\mathcal F}_{T'}}  + \| u^{(n)}\|_{\td {\mathcal F}_{T'}}.
}
We may choose $n$ large so that the first term is small and then make the second term small by taking $T'$ small.
Hence,
\EQ{\label{limit:zero.ns}
\lim_{T'\to 0^+} \| u\|_{\td{\mathcal F}_{T'}}= 0.
}
Using \eqref{ineq:E3qintegral} and \eqref{limit:zero.ns}, we have
\EQN{
\lim_{T'\to 0^+} \sup_{0<t<T'} \| B(u,u)\|_{E^3_q} (t)=0.
}
This and Lemma \ref{lemma:heat.eq.convergence.data} imply 
\[
\lim_{t\to 0} \| u-u_0\|_{E^3_q}=0.
\]

If $q=\I$ then we have a weaker mode of convergence. Fix a ball $B$. Take $R>0$ large so that $B\subset B_R(0)$.  We re-write the  bilinear form as  \[B(u,u) = B(u,u \chi_{B_R(0)}) + B(u,u(1-\chi_{B_R(0)})).\]
If $1< \omega<3$ then we have  
\[
\|   B(u,u \chi_{B_R(0)})  \|_{L^\omega }\lec \int_0^t \frac 1 {(t-\tau)^{\frac 1 2 + \frac 3 2 (  \frac 2 {3} -\frac 1 \omega )}} \| u^2\chi_{B_R(0)} \|_{L^{3/2}} (\tau) \,d\tau \lesssim_R t^{\frac 1 2 - \frac 3 2(\frac 2 3 - \frac 1 \omega)} \| u\|_{L^\I L^{3}_\uloc}^2.
\]
For any $R>0$, the above vanishes as $t\to 0$ provided $\omega<3$.

By taking $R= 2\max_{x\in B}|x|$, we can ensure that for all $x\in B$ and $|y|\geq R$ we have 
$\frac 1 2 |y|\leq |x-y|\leq \frac 3 2 |y|$. Hence, for $x\in B$,
\EQN{
|B(u,u \chi_{B_R(0)^c})(x,t) | &\lesssim  \int_0^t \int_{ y\in B_R(0)^c} \frac 1 {(|x-y|+\sqrt {t-\tau})^4}  |u|^2(y,\tau)\,dy\,d\tau
\\&\lesssim  t\sup_{0<\tau<t} \sum_{k=1}^\I  \frac 1 {2^{4k-4}R^4} \int_{  R2^{k-1}\leq  |y| < R2^{k} } |u|^2(y,\tau) \,dy
\\&\lesssim t \sup_{0<\tau<t}\sum_{k=1}^\I  \frac {R 2^k} {R^42^{4k-4}}   \bigg(\int_{  |y| < R2^{k} } |u|^3(y,\tau) \,dy\bigg)^\frac 2 3
\\&\lesssim t \sum_{k=1}^\I  \frac {R^3 2^{3k} } {R^42^{4k-4}}  \sup_{0<\tau<t}  \| u(\tau)\|_{L^3_\uloc}^2
\\&\lesssim \frac t R  \sup_{0<\tau<t} \| u(\tau)\|_{L^3_\uloc}^2.
}

Now, 
\EQN{
\| B(u,u) (t)\|_{L^\omega(B)} &\lesssim _R  \| B(u,u \chi_{B_R(0)}) (t)\|_{L^\omega}  +   \| B(u,u \chi_{B_R(0)^c}) (t)\|_{L^\I(B)} 
\\&\lesssim_R  t^{\frac 1 2 - \frac 3 2(\frac 2 3 - \frac 1 \omega)} \| u\|_{L^\I L^{3}_\uloc}^2 + t \sup_{0<\tau<t} \| u(\tau)\|_{L^3_\uloc}^2.
}
Hence 
\[
\lim_{t\to 0^+}\| B(u,u) (t)\|_{L^\omega(B)} = 0.
\]
Referring to  \cite[p. 394]{MaTe},
 we have \[\lim_{t\to 0}\|e^{t\Delta}u_0 -u_0\|_{L^\omega (B)} =0.\]   It follows that 
\[
\lim_{t\to 0}\| u(\cdot,t)-u_0(\cdot)\|_{L^\omega(B)} =0.
\]

We now prove continuity at positive times. Let $t_1>0$ be fixed. We will send $t\to t_1$.  Note that by Lemma \ref{lemma:heat.eq.convergence.data} we have $e^{t\Delta}u_0 -e^{t_1\Delta}u_0\to 0$ in $E^3_q$ as $t\to t_1$. We therefore only need to show $B(u,u)(t)\to B(u,u)(t_1)$. Following \cite[p.~86]{Tsai-book}, we take $\rho$ slightly less than $1$ so that $\rho t_1<t$  and write
\EQN{
B(u,u)(t)- B(u,u)(t_1) &= \int_{\rho t_1}^t e^{(t-\tau)\Delta} \mathbb P\nb \cdot F\,d\tau
{-} \int_{\rho t_1}^{t_1} e^{(t_1-\tau)\Delta} \mathbb P\nb \cdot F\,d\tau 
\\&+\int_{0}^{\rho t_1} \big(e^{(t-\rho t_1)\Delta}-e^{(t_1-\rho t_1)\Delta} \big)e^{(\rho t_1-\tau)\Delta} \mathbb P\nb \cdot F\,d\tau 
}
where $F=u\otimes u(\tau)$. For the first and second terms,
by \eqref{Wa-estimate-bilinear} with $p=\td p = 3$, $\td q = q$ and using the embedding $E^\infty_q\subset E^\infty_\infty$,
we have 
\EQN{
\int_{\rho t_1}^t  \|  e^{(t-\tau)\Delta} \mathbb P\nb\cdot  F\|_{E^3_q}\,d\tau  &\lesssim  \int_{\rho t_1}^{t} \frac 1 {(t-\tau)^{\frac 1 2}\tau^\frac 1 2} \| u\|_{E^3_q} (\tau) \tau^{\frac 1 2}\| u\|_{E^\I_q} \,d\tau
 \lesssim \frac {(t-\rho t_1)^\frac 1 2}	{(\rho t_1)^\frac 1 2}	\|u\|_{\mathcal E_T}^2,
}
and 
\EQN{
\int_{\rho t_1}^{t_1} \|e^{(t_1-\tau)\Delta} \mathbb P\nb \cdot F\|_{E^3_q}\,d\tau  &\lesssim  \int_{\rho t_1}^{t_1} \frac 1 {(t_1-\tau)^{\frac 1 2}\tau^\frac 1 2} \| u\|_{E^3_q} (\tau) \tau^{\frac 1 2}\| u\|_{E^\I_q} \,d\tau
 \lesssim \frac {(t_1-\rho t_1)^\frac 1 2}	{(\rho t_1)^\frac 1 2}	\|u\|_{\mathcal E_T}^2,
}
both of which can be made arbitrarily small by taking $\rho t_1$ close to $t_1$ and $t$ close to $t_1$.

For the third term we note that by Lemma \ref{lemma:heat.eq.convergence.data}, for each $0<\tau<\rho t_1$, we have
\[
\|  \big(e^{(t-\rho t_1)\Delta}-e^{(t_1-\rho t_1)\Delta} \big)e^{(\rho t_1-\tau)\Delta} \mathbb P\nb \cdot F(\tau) \|_{E^3_q}\to 0 \text{ as }t\to t_1,
\]
which follows {(even if $q=\infty$)}  the fact that $e^{(\rho t_1-\tau)\Delta} \mathbb P\nb \cdot F(\tau)\in  E^3_q$, which is a consequence of Lemma \ref{Wa-estimate}.
Additionally,  
\EQN{
& \|  \big(e^{(t-\rho t_1)\Delta}-e^{(t_1-\rho t_1)\Delta} \big)e^{(\rho t_1-\tau)\Delta} \mathbb P\nb \cdot F(\tau)	\|_{E^3_q}
\\&\lesssim  
 \bigg( \frac 1 {(t-\tau)^\frac 1 2 \tau^\frac 1 2} + \frac 1 {(t_1-\tau)^\frac 1 2 \tau^\frac 1 2}\bigg) \| u\|_{\mathcal E_T}^2 \in L^1(0,\rho t_1),
}
where integration in   $L^1(0,\rho t_1)$ is with respect to $\tau$.
So, by Lebesgue's dominated convergence theorem, 
\Eq{
\int_0^{\rho t_1 } \|  \big(e^{(t-\rho t_1)\Delta}-e^{(t_1-\rho t_1)\Delta} \big)e^{(\rho t_1-\tau)\Delta} \mathbb P\nb \cdot F(\tau)	\|_{E^3_q}\,d\tau \to 0 \text{ as }t\to t_1.
}
{The above show the continuity of $u(t)$ at positive times.}
\end{proof}

In the following we will prove spacetime integral bound \eqref{eq1.7} for $p=3$, $s=\infty$, i.e. in $E^{\infty,3}_{T,q}$. For this purpose we will need \emph{time-weighted space-time integral estimates} for both linear and nonlinear terms in the following two lemmas. Because the constants depend on $T$, these estimates cannot be extended to $T=\infty$.

\begin{lemma}\label{lem-new-linear}
Let $d=3$ and $0<T<\infty$.
For $3<p\le \infty$ and $a=\frac{p-3}{2p}\in (0,1/2]$, we have
\EQ{\label{lem-new-linear-eq1}
\norm{t^a e^{t\De}u_0}_{E^{\infty,p}_{T,q}} \lec (1+T^a) \norm{u_0}_{E^3_q}.
}
{For $p=3$ and $a=0$, we also have
\EQ{\label{lem-new-linear-eq2}
\norm{e^{t\De}u_0}_{E^{\infty,3}_{T,q}} \lec \ln(2+T) \norm{u_0}_{E^3_q}.
}}
\end{lemma}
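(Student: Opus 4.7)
The plan is to follow the localization template of the proof of Lemma \ref{lemma:linear.heat.amalgam.spacetime-new}: for each $k\in\ZZ^3$ and $x\in B_1(k)$, decompose
\[
e^{t\De}u_0(x) = e^{t\De}(u_0\chi_{B_4(k)})(x) + e^{t\De}(u_0(1-\chi_{B_4(k)}))(x) =: f_1^k(x,t) + f_2^k(x,t),
\]
estimate each piece separately in $L^\infty((0,T);L^p(B_1(k)))$ with the weight $t^a$, and then sum in $\ell^q(k)$. The point is that, unlike for $L^s_T$ norms with $s<\infty$, the $L^\infty$ norm in time does not integrate away singularities, so the weight $t^a$ is used only to absorb the local $L^3$-$L^p$ smoothing rate.

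For the local piece $f_1^k$, the classical $L^3$-$L^p$ heat estimate yields $\|f_1^k(\cdot,t)\|_{L^p(\R^3)} \lec t^{-a}\|u_0\|_{L^3(B_4(k))}$; the weight $t^a$ cancels this exactly, giving a contribution bounded by $\|u_0\|_{E^3_q}$ (upon taking $\ell^q$ in $k$) with no $T$-dependence. When $p=3$ we use $a=0$ and the trivial bound $\|e^{t\De}\|_{L^3\to L^3}\lec 1$ in place of decay.

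For the nonlocal piece $f_2^k$, the pointwise Gaussian bound on the heat kernel gives
\[
\|f_2^k(\cdot,t)\|_{L^p(B_1(k))} \lec \sum_{|k'|\ge 4} t^{-3/2}e^{-c|k'|^2/t}\,\|u_0\|_{L^3(B_1(k+k'))}.
\]
Multiplying by $t^a$, taking $\sup_{0<t<T}$, and applying Young's convolution inequality in $\ell^q(k)$ reduces the desired estimate to bounding
\[
\|K(\cdot,T)\|_{\ell^1(\ZZ^3\setminus\{0\})}, \qquad K(k',T) := \sup_{0<t<T} t^{a-3/2}e^{-c|k'|^2/t}.
\]

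The crux is therefore a two-regime computation of $K(k',T)$. The unconstrained supremum over $t>0$ is attained at $t^\ast\sim |k'|^2$ with value $\sim |k'|^{2a-3}$; hence when $|k'|\le c_0 T^{1/2}$ we have $K(k',T)\sim |k'|^{2a-3}$, while when $|k'|>c_0 T^{1/2}$ the sup is attained at the right endpoint $t=T$ and equals $T^{a-3/2}e^{-c|k'|^2/T}$. For the bulk term,
\[
\sum_{1\le |k'|\le c_0 T^{1/2}} |k'|^{2a-3} \lec \int_1^{c_0 T^{1/2}} r^{2a-1}\,dr \lec
\begin{cases} 1+T^a & \text{if } a>0,\\ \ln(2+T) & \text{if } a=0,\end{cases}
\]
while the tail is dominated by $T^{a-3/2}\int_{\R^3} e^{-c|y|^2/T}\,dy \lec T^a$, which is absorbed in both cases. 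Combining the local and nonlocal contributions yields \eqref{lem-new-linear-eq1} and \eqref{lem-new-linear-eq2}. The only genuine subtlety is tracking endpoint behavior in the last sum: the logarithmic divergence of $\sum |k'|^{-3}$ over $|k'|\lesssim T^{1/2}$ when $a=0$ is precisely what produces the $\ln(2+T)$ factor when $p=3$, and is the reason the estimate cannot be made uniform in $T$ in that borderline case.
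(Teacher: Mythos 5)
Your proposal is correct and follows essentially the same route as the paper: the same local/far-field decomposition relative to $B_4(k)$, the same $L^3$--$L^p$ smoothing for the local piece (the weight $t^a$ cancelling the rate exactly), and the same reduction of the far-field piece to the lattice sum $\sum_{|k'|\lesssim \sqrt T}|k'|^{2a-3}$ plus a Gaussian tail, which produces $T^a$ for $a>0$ and $\ln(2+T)$ at the endpoint $a=0$. The only cosmetic difference is that you compute $\sup_{0<t<T}t^{a-3/2}e^{-c|k'|^2/t}$ by a two-regime calculus argument, whereas the paper first absorbs $t^a$ into the kernel as $e^{-|z|^2/(ct)}(|z|+\sqrt t)^{2a-3}$ so that the supremum in $t$ is monotone; both yield the same $\ell^1$ bound.
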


\begin{proof}
For $x\in B_1(k)$ we write
\[t^a |e^{t\De} u_0(x)| \lec \int_{\R^3} \frac{e^{-\frac{|x-y|^2}{ct}}}{(|x-y|+\sqrt t)^{3-2a}} |u_0(y)|\, dy =: U^k_1 + U^k_2,\] where $U^k_1 =  \int_{\R^3} \frac{e^{-\frac{|x-y|^2}{ct}}}{(|x-y|+\sqrt t)^{3-2a}} (|u_0| \chi_{B_4(k)})(y)\, dy$ and $U^k_2 =  \int_{\R^3} \frac{e^{-\frac{|x-y|^2}{ct}}}{(|x-y|+\sqrt t)^{3-2a}} (|u_0| (1 - \chi_{B_4(k)}))(y)\, dy$.
Using Young's convolution inequality,
\[
\norm{U^k_1}_{L^p(B_1(k))} \le \norm{U^k_1}_{L^p(\R^3)} \lec \norm{\frac{e^{-\frac{|\cdot|^2}{ct}}}{(|\cdot|+\sqrt t)^{3-2a}}}_{L^{\frac{3p}{2p+3}}({\R^3})} \norm{u_0}_{L^3 {(B_4(k))}}
\lec \norm{u_0}_{L^3 {(B_4(k))}}.
\]
Thus, 
\[
\norm{\sup_{0<t<T} \norm{U^k_1}_{L^p(B_1(k))}}_{\ell^q(k\in\ZZ^3)} \lec \norm{u_0}_{E^3_q}. 
\]
On the other hand,
\[
\norm{U^k_2}_{L^p(B_1(k))} \lec \sum_{|k'|\ge 1} \frac{e^{-\frac{|k'|^2}{ct}}}{(|k'|+\sqrt t)^{3-2a}} \norm{u_0}_{L^3(k'-k)}.
\]
So
\EQN{
\norm{\sup_{0<t<T} \norm{U^k_2}_{L^p(B_1(k))}}_{\ell^q(k\in\ZZ^3)}
&\lec \norm{\sup_{0<t<T} \sum_{|k'|\ge 1} \frac{e^{-\frac{|k'|^2}{ct}}}{(|k'|+\sqrt t)^{3-2a}} \norm{u_0}_{L^3(B_1(k'-k))}}_{\ell^q(k\in\ZZ^3)}\\
&\le \norm{\sum_{|k'|\ge 1} \frac{e^{-\frac{|k'|^2}{cT}}}{|k'|^{3-2a}} \norm{u_0}_{L^3(B_1(k'-k))}}_{\ell^q(k\in\ZZ^3)}\\
&\le \bigg\|\frac{e^{-\frac{|k|^2}{cT}}}{|k|^{3-2a}} \bigg\|_{\ell^1(k\in{\ZZ^3\setminus\{0\} })} \norm{u_0}_{E^3_q}
\lec T^a \norm{u_0}_{E^3_q}.
}
{In the last inequality we have used $a>0$.}
Therefore,
\EQS{
\norm{t^a e^{t\De} u_0}_{E^{\infty,p}_{T,q}} 
&\lec \norm{\sup_{0<t<T} \norm{U^k_1}_{L^p(B_1(k))}}_{\ell^q(k\in\ZZ^3)} + \norm{\sup_{0<t<T} \norm{U^k_2}_{L^p(B_1(k))}}_{\ell^q(k\in\ZZ^3)}\\
&\lec (1 + T^a) \norm{u_0}_{E^3_q}.
}
{This shows \eqref{lem-new-linear-eq1}. When $p=3$ and $a=0$, since
\[
\bigg\|\frac{e^{-\frac{|k|^2}{cT}}}{|k|^{3}} \bigg\| _{\ell^1(k\in\ZZ^3\setminus\{0\} ) }  \lec \ln (2+T),
\]
the same argument gives \eqref{lem-new-linear-eq2}.} 
\end{proof}

\begin{lemma}\label{lem-new-bilinear}
{Let $d=3$ and $0<T<\infty$.}
For {nonnegative} $a,b$ with $a+b<1$ and $p\in[1,\infty]$, $\td p\in(3,\infty]$ with $\frac12 - \frac3{2\td p} - b\ge0$, 
\[
\norm{t^a B(f,g)}_{E^{\infty,p}_{T,q}} \lec \bke{T^{\frac12 - \frac3{2\td p} - b} + T^{1-b} } \min\bke{\norm{t^af}_{E^{\infty,p}_{T,q}}\norm{t^bg}_{E^{\infty,\td p}_{T,q}}, \norm{t^ag}_{E^{\infty,p}_{T,q}}\norm{t^bf}_{E^{\infty,\td p}_{T,q}}}.
\]
\end{lemma}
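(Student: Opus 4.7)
\textbf{Proof plan for Lemma \ref{lem-new-bilinear}.} The structure mirrors the proof of Lemma \ref{lemma:duhamel.amalgam}, but we insert the temporal weights $\tau^{-a-b}$ pointwise inside the time integral and we take $\sup_{0<t<T}$ \emph{before} summing in $\ell^q$ (because the target norm $E^{\infty,p}_{T,q}$ is defined with that ordering). For each $k\in \ZZ^3$ and $x\in B_1(k)$, split
\[
B(f,g)_i(x,t)=F_1^k(x,t)+F_2^k(x,t)
\]
where $F_1^k$ integrates against $f\otimes g\,\chi_{B_4(k)}$ and $F_2^k$ against $f\otimes g\,(1-\chi_{B_4(k)})$. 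For the Hölder split of the product, write $\frac1{\td p_1}=\frac1p+\frac1{\td p}$. By the symmetry of Hölder's inequality $\|fg\|_{L^{\td p_1}}\le\|f\|_{L^p}\|g\|_{L^{\td p}}=\|g\|_{L^p}\|f\|_{L^{\td p}}$, it suffices to prove the bound with the first choice; the second gives the other term in the $\min$.

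\textbf{Estimate for $F_1^k$.} Apply Young's convolution inequality on $\R^3$ with the Oseen-tensor bound $\|\nb S(\cdot,s)\|_{L^{\td p/(\td p-1)}}\lec s^{-1/2-3/(2\td p)}$ (valid because $\td p>3$), together with Hölder on $B_4(k)$:
\[
\|F_1^k(\cdot,t)\|_{L^p(B_1(k))}\lec \int_0^t (t-\tau)^{-\frac12-\frac3{2\td p}}\|f(\tau)\|_{L^p(B_4(k))}\|g(\tau)\|_{L^{\td p}(B_4(k))}\,d\tau.
\]
Insert $\|f(\tau)\|_{L^p(B_4(k))}\le \tau^{-a}N_f^k$ and $\|g(\tau)\|_{L^{\td p}(B_4(k))}\le \tau^{-b}N_g^k$ with $N_f^k:=\sup_{0<s<T}s^a\|f(s)\|_{L^p(B_4(k))}$ and analogously for $N_g^k$. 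The resulting Beta integral evaluates to a constant multiple of $t^{\,1/2-3/(2\td p)-a-b}$ since $a+b<1$ and $\td p>3$. Multiplying by $t^a$ and using $\frac12-\frac3{2\td p}-b\ge 0$ gives
\[
\sup_{0<t<T} t^a\|F_1^k\|_{L^p(B_1(k))}\lec T^{\frac12-\frac3{2\td p}-b}\,N_f^k N_g^k.
\]
Taking $\ell^q_k$, applying Hölder via $\ell^q\cdot\ell^\infty\subset\ell^q$ and covering $B_4(k)$ by a bounded number of unit balls yields the $F_1$-contribution.

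\textbf{Estimate for $F_2^k$.} Using $|\nb S(x,s)|\lec(|x|+\sqrt s)^{-4}$ and decomposing the spatial integral into unit boxes as in the proof of Lemma \ref{lemma:duhamel.amalgam} (compare \eqref{eq-F2k}), together with Hölder on each $B_2(k-k')$,
\[
|F_2^k(x,t)|\lec \sum_{|k'|\ge 1}\int_0^t \frac{\|f(t-\tau)\|_{L^p(B_2(k-k'))}\|g(t-\tau)\|_{L^{\td p}(B_2(k-k'))}}{(|k'|+\sqrt\tau)^4}\,d\tau.
\]
Inserting the weights and substituting $\sigma=t-\tau$, the time integral is controlled by $|k'|^{-4}\int_0^t\sigma^{-a-b}\,d\sigma\lec |k'|^{-4}\,t^{1-a-b}/(1-a-b)$ because $a+b<1$. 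Multiplying by $t^a$ gives $t^{1-b}$, bounded by $T^{1-b}$ since $b<1$. The discrete convolution in $k$ with the kernel $|k'|^{-4}\in\ell^1(\ZZ^3\setminus\{0\})$ then admits Young's inequality in $\ell^q$, and we conclude via the Hölder split $\ell^q\cdot\ell^\infty\subset\ell^q$ exactly as for $F_1$.

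\textbf{Obstacle.} The only subtlety is the ordering of sup and $\ell^q$: because the target norm is $E^{\infty,p}_{T,q}$ rather than $L^\infty_T E^p_q$, we must carry $\sup_{0<t<T}$ inside $\ell^q_k$, which forces us to work pointwise in $k$ when inserting the weights and to use the $\ell^q\subset\ell^\infty$ embedding on the $g$-factor (or $f$-factor) before multiplying the local norms. Once this bookkeeping is set up, the two estimates combine to give the stated $T^{1/2-3/(2\td p)-b}+T^{1-b}$ prefactor, and the $\min$ follows from swapping the roles of $f$ and $g$ in Hölder's inequality on $f\otimes g$.
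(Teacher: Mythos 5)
Your proposal is correct and follows essentially the same route as the paper's proof: the same local/far-field decomposition relative to $B_4(k)$, the Oseen-tensor $L^r$ bound plus a Beta integral for $F_1^k$ yielding the $T^{1/2-3/(2\td p)-b}$ factor, the $|k'|^{-4}$ discrete convolution with Young's inequality in $\ell^q$ for $F_2^k$ yielding $T^{1-b}$, and the $\min$ obtained by swapping $f$ and $g$ in the Hölder split. The bookkeeping point you flag (keeping $\sup_{0<t<T}$ inside the $\ell^q_k$ sum and using $\ell^q\cdot\ell^\infty\subset\ell^q$) is exactly how the paper handles it.
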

\begin{proof}
Decompose in $B_1(k)$ for $k \in \ZZ^3$,
\[
B_i(f,g)(x,t)= \int_0^t (F_1^k(x,t,\tau)+F_2^k(x,t,\tau))\,d\tau,
\]
with $F_1^k(x,t,\tau) = \int_{\R^3}  \pd_l S_{ij}(x-y,t-\tau) (f_lg_j\chi_{B_4(k)})(y,\tau)\, dy$ and $F_2^k(x,t,\tau) = \int_{\R^3}  \pd_l S_{ij}(x-y,t-\tau) (f_lg_j(1-\chi_{B_4(k)}))(y,\tau)\, dy$.
By the computation in \eqref{est-u1.a} without time integration,
\EQN{
\norm{F^k_1(\cdot,t,\tau)}_{L^p(B_1(k))} 
&\lec (t-\tau)^{-\frac32\bke{\frac1r-\frac1p}-\frac12} \norm{f\otimes g}_{L^r(B_4(k))},\quad \frac1r=\frac1p+\frac1{\td p},\\
&\lec (t-\tau)^{-\frac32\bke{\frac1r-\frac1p}-\frac12} \norm{f}_{L^p(B_4(k))} \norm{g}_{L^{\td p}(B_4(k))}(\tau).
}
So
\begin{align}
&\norm{\sup_{0<t<T} t^a \int_0^t \norm{F^k_1(\cdot,t,\tau)}_{L^p(B_1(k))}\, d\tau}_{\ell^q(k\in\ZZ^3)}\nonumber\\
&\quad \le \norm{\sup_{0<t<T} t^a \int_0^t \frac1{(t-\tau)^{\frac12+\frac3{2\td p}}\tau^{a+b}}\, d\tau \norm{t^a f}_{L^\infty_T L^p(B_4(k))} \norm{t^b g}_{L^\infty_T L^{\td p}(B_4(k))} }_{\ell^q(k\in\ZZ^3)}\nonumber\\
&\quad \lec T^{\frac12 - \frac3{2\td p} - b} \norm{ \norm{t^a f}_{L^\infty_T L^p(B_4(k))} \norm{t^b g}_{L^\infty_T L^{\td p}(B_4(k))} }_{\ell^q(k\in\ZZ^3)}\nonumber\\
&\quad \le T^{\frac12 - \frac3{2\td p} - b} \norm{ \norm{t^a f}_{L^\infty_T L^p(B_4(k))}}_{\ell^q(k\in\ZZ^3)} \norm{ \norm{t^b g}_{L^\infty_T L^{\td p}(B_4(k))} }_{\ell^\infty(k\in\ZZ^3)}\nonumber\\
&\quad \lec T^{\frac12 - \frac3{2\td p} - b} \norm{t^af}_{E^{\infty,p}_{T,q}}\norm{t^bg}_{E^{\infty,\td p}_{T,q}}.
\label{lem-new-bilinear-eq1}
\end{align}

On the other hand, the computation in \eqref{eq-F2k} without time integration yields 
\[
\norm{F^k_2(\cdot,t,\tau)}_{L^p(B_1(k))} \lec \sum_{|k'|\ge1} \frac1{(|k'|+\sqrt{t-\tau})^4} \norm{(f\otimes g)(\tau)}_{L^r(B_2(k-k'))},\quad r\ge1.
\]
Choosing $r$ so that $\frac1r=\frac1p+\frac1{\td p}$, one has
\EQN{
&\norm{\sup_{0<t<T} t^a \int_0^t \norm{F^k_2(\cdot,t,\tau)}_{L^p(B_1(k))}\, d\tau}_{\ell^q(k\in\ZZ^3)}\\
&\quad \lec \norm{\sup_{0<t<T}  \int_0^t  \sum_{|k'|\ge 1} \frac{t^a}{(|k'|+\sqrt{t-\tau})^4}  \norm{f(\cdot, \tau)}_{L^p(B_2(k-k'))} \norm{g(\cdot,\tau)}_{L^{\td p}(B_2(k-k'))}\, d\tau}_{\ell^q(k\in\ZZ^3)}\\
&\quad \le \norm{\sup_{0<t<T}  \sum_{|k'|\ge 1} \int_0^{t}  \frac{t^a\,d\tau }{(|k'|+\sqrt{t-\tau})^4 \tau^{a+b}}\, \norm{\tau^a f}_{L^\infty_TL^p(B_2(k-k'))} \norm{\tau^b g}_{L^\infty_TL^{\td p}(B_2(k-k'))}}_{\ell^q(k\in\ZZ^3)}\\
&\quad \lec T^{1-b} \norm{ \sum_{|k'|\ge 1} \frac1{|k'|^4} \norm{t^a f}_{L^\infty_TL^p(B_2(k-k'))} \norm{t^b g}_{L^\infty_TL^{\td p}(B_2(k-'k))} }_{\ell^q(k\in\ZZ^3)}
}
Using Young's convolution inequality for the $\ell^q$ norm above, 
\begin{align}
&\norm{\sup_{0<t<T} t^a \int_0^t \norm{F^k_2(\cdot,t,\tau)}_{L^p(B_1(k))}\, d\tau}_{\ell^q(k\in\ZZ^3)} \nonumber\\
&\quad \lec T^{1-b} \norm{\frac{\chi_{k\neq0}}{|k|^4}}_{\ell^1(k\in\ZZ^3)} \norm{\norm{t^a f}_{L^\infty_TL^p(B_2(k))} \norm{t^b g}_{L^\infty_TL^{\td p}(B_2(k))}}_{\ell^q(k\in\ZZ^3)}\nonumber\\
&\quad \lec T^{1-b} \norm{ \norm{t^a f}_{L^\infty_T L^p(B_4(k))}}_{\ell^q(k\in\ZZ^3)} \norm{ \norm{t^b g}_{L^\infty_T L^{\td p}(B_4(k))} }_{\ell^\infty(k\in\ZZ^3)}\nonumber\\
&\quad \lec T^{1-b} \norm{t^af}_{E^{\infty,p}_{T,q}}\norm{t^bg}_{E^{\infty,\td p}_{T,q}}.\label{lem-new-bilinear-eq2}
\end{align}
The lemma follows by summing \eqref{lem-new-bilinear-eq1} and \eqref{lem-new-bilinear-eq2}, and by switching $f,g$ in the estimates.
\end{proof}

\begin{remark} It is possible to prove time-weighted
space-time integral estimates for the linear and nonlinear terms, with \emph{finite time exponent} $s<\infty$. For example, 
\EQ{\label{0422b}
\norm{t^{a} e^{t\De}u_0}_{E^{s,p}_{T,q}} \lec_T \norm{u_0}_{E^3_q}, \quad a(s,p)=\frac12 -\frac 3{2p}-\frac 1s>0.
}
It extends Lemma \ref{lemma:linear.heat.amalgam.spacetime-new}. We limit ourselves here to $s=\infty$.
\end{remark}

\begin{proof}[Proof of spacetime integral estimates in $E^{\infty,3}_{T,q}$ in Theorem \ref{thrm:critical}]
\mbox{}

To prove the spacetime integral bound \eqref{eq1.7} for $p=3$, $s=\infty$, we work in the spaces with the norms
\[
\norm{f}_{\mathcal E_T^*} = \norm{f}_{E^{\infty,3}_{T,q}} + \norm{t^{\frac12}f}_{E^{\infty,\infty}_{T,q}},\qquad
\norm{f}_{\mathcal F_T^*} = \norm{t^{\frac14} f}_{E^{\infty,6}_{T,q}}.
\]
We also have $\mathcal E_T^* \subset \mathcal F_T^*$ since
\EQN{
\norm{t^{\frac14} f}_{E^{\infty,6}_{T,q}} 
&= \norm{\norm{t^{\frac14}\norm{f}_{L^6(B_1(k))}}_{L^\infty_T}}_{\ell^q(k\in\ZZ^3)} 
\le \norm{\norm{t^{\frac14} \norm{f}_{L^\infty(B_1(k))}^{1/2} \norm{f}_{L^3(B_1(k))}^{1/2} }_{L^\infty_T}}_{\ell^q(k\in\ZZ^3)} \\
&\le \norm{\norm{t^{\frac12} f}_{L^\infty_TL^\infty(B_1(k))}^{1/2} \norm{f}_{L^\infty_TL^3(B_1(k))}^{1/2}  }_{\ell^q(k\in\ZZ^3)}\\
&\le \norm{\norm{t^{\frac12} f}_{L^\infty_TL^\infty(B_1(k))}^{1/2}}_{\ell^{2q}(\ZZ^3)}  \norm{\norm{f}_{L^\infty_TL^3(B_1(k))}^{1/2}  }_{\ell^{2q}(k\in\ZZ^3)}
= \norm{t^{\frac12} f}_{E^{\infty,\infty}_{T,q}}^{1/2} \norm{f}_{E^{\infty,3}_{T,q}}^{1/2}.
}

For the linear estimate in $\mathcal F^*_T$, taking $p=6$ so that $a=1/4$ in Lemma \ref{lem-new-linear}, we have
\EQS{
\norm{e^{t\De} u_0}_{\mathcal F^*_T} 
= \norm{t^{\frac14} e^{t\De} u_0}_{E^{\infty,6}_{T,q}} 
\le C_1^* (1 + T^{\frac14}) \norm{u_0}_{E^3_q}.
}

For bilinear estimate, taking $a=b=1/4$, $p=\td p=6$ in Lemma \ref{lem-new-bilinear},
\[
\norm{B(f,g)}_{\mathcal F_T^*} = \norm{t^{\frac14}B(f,g)}_{E^{\infty,6}_{T,q}}
\le C_2^* (1 + T^{\frac34}) \norm{t^{\frac14}f}_{E^{\infty,6}_{T,q}} \norm{t^{\frac14}g}_{E^{\infty,6}_{T,q}}
= C_2^* (1 + T^{\frac34}) \norm{f}_{\mathcal F_T^*} \norm{g}_{\mathcal F_T^*}.
\]
By choosing $\norm{u_0}_{E^3_q}$ small enough so that $\norm{u_0}_{E^3_q}<\frac1{4C_1^*C_2^*\bke{1+T^{\frac14}}\bke{1+T^{\frac34}}}$, Picard iteration yields a unique mild solution satisfying 
\[
\norm{u}_{\mathcal F_T^*} \le 2 C_1^*(1+T^{\frac14}) \norm{u_0}_{E^3_q}.
\]

Now, we claim that a solution $u\in\mathcal F^*_T$ with small enough $u_0\in E^3_q$ also belongs to $\mathcal E^*_T$.
By \eqref{lem-new-linear-eq2} of Lemma \ref{lem-new-linear} (or by \eqref{lemma:linear.sotkes.amalgam.a} with $d=p=r=3$, $s=\infty$, and constant $1+T^\frac14$),
\[
\norm{e^{t\De} u_0}_{E^{\infty,3}_{T,q}} \lec \ln(2+T) \norm{u_0}_{E^3_q}.
\]
Taking $p=\infty$ so that $a=1/2$ in Lemma \ref{lem-new-linear},
\[
\norm{t^{\frac12} u_0}_{E^{\infty,\infty}_{T,q}} \lec (1+T^\frac12) \norm{u_0}_{E^3_q}.
\]
Therefore, one has 
\EQ{
\norm{e^{t\De}u_0}_{\mathcal E^*_T} \lec (1+T^\frac12)  \norm{u_0}_{E^3_q}.
}

We next show
\EQ{ 
\norm{B(f,g)}_{\mathcal E^*_T} \lec_T \min\bke{\norm{f}_{\mathcal E^*_T}\norm{g}_{\mathcal F^*_T}, \norm{g}_{\mathcal E^*_T}\norm{f}_{\mathcal F^*_T}}.
}
Indeed, by choosing $(a,b,p,\td p)=(0,1/4,3,6)$ and $(a,b,p,\td p)=(1/2,1/4,\infty,6)$ in Lemma \ref{lem-new-bilinear}, we get
\[
\norm{B(f,g)}_{E^{\infty,3}_{T,q}} \lec (1+T^{\frac34}) \min\bke{ \norm{f}_{E^{\infty,3}_{T,q}} \norm{t^{\frac14} g}_{E^{\infty,6}_{T,q}}, \norm{ g}_{E^{\infty,3}_{T,q}} \norm{t^{\frac14} f}_{E^{\infty,6}_{T,q}} }
\]
and
\[
\norm{t^{\frac12} B(f,g)}_{E^{\infty,\infty}_{T,q}} \lec (1+T^{\frac34}) \min\bke{ \norm{t^{\frac12}f}_{E^{\infty,\infty}_{T,q}} \norm{t^{\frac14} g}_{E^{\infty,6}_{T,q}}, \norm{t^{\frac12}g}_{E^{\infty,\infty}_{T,q}} \norm{t^{\frac14} f}_{E^{\infty,6}_{T,q}} },
\]
respectively.
By the same argument as before, we can show $u \in \mathcal E^*_T$, by taking possibly a smaller $T>0$. In particular, $u\in E^{\infty,3}_{T,q}$ and its norm is bounded $\norm{u_0}_{E^3_q}$.
The case $s=\infty$ of Theorem \ref{thrm:critical} then follows from the embeddings $\norm{u}_{L^\infty E^3_q}\le \norm{u}_{E^{\infty,3}_{T,q}}$ and $\norm{t^{\frac12}u}_{L^\infty_TE^\infty_q}\le \norm{t^{\frac12}u}_{E^{\infty,\infty}_{T,q}}$.
\end{proof}

\subsection{Mild solutions in critical spaces with enough decay}
We finally prove Theorem \ref{thrm:critical2} for critical data $u_0 \in E^3_q$ with enough decay, $1\le q\le 3$. For local wellposedness, we can allow the data to be large thanks to the decay. We will assume smallness for global wellposedness.

\begin{proof}[Proof of Theorem \ref{thrm:critical2}]
Although this proof is very similar to the proof of Theorem \ref{thrm:critical}, there are technical modifications throughout which necessitate we revisit the details.
By Lemma \ref{Wa-estimate}, we have for $1\le q \le 3$,
\EQ{\label{u-lin-est}
\sup_{0<t<\I}\bke{ \norm{e^{t\De}u_0}_{E^3_q}  +  t^{\frac12} \norm{e^{t\De}u_0}_{E^\I_{q_2}} +t^{\frac14} \norm{e^{t\De}u_0}_{E^6_{q_1}}} \le C \norm{u_0}_{E^3_q},
}
where
\EQ{
\frac 1{q_1} = \frac 1q - \frac 16, \quad
\frac 1{q_2} = \frac 1q - \frac 13 , \quad
q < q_1 < q_2 \le \oo.
}

Motivated by \eqref{u-lin-est},
for $0<T\le \oo$ and $1\le q \le 3$, let $\mathcal{E}_T$, $\mathcal{F}_T$ be Banach spaces defined as
\EQ{
\mathcal{E}_T 
:= \bket{ f\in L^\infty(0,T;E^3_q):\ t^{\frac12} f(\cdot,t) \in L^\infty(0,T; E^\I_{q_2})
},
}
and
\EQ{
\mathcal{F}_T := \bket{ f :\ t^{\frac14}f(\cdot,t) \in L^\infty(0,T; E^6_{q_1})} ,
}
with norms
\[
\norm{f}_{\mathcal{E}_T} 
:= \sup_{0<t<T} \norm{f(\cdot,t)}_{E^3_q}  + \sup_{0<t<T} t^{\frac12} \norm{f(\cdot,t)}_{E^\I_{q_2}}\quad \text{ and } \quad 
\norm{f}_{\mathcal{F}_T} 
:= \sup_{0<t<T} t^{\frac14} \norm{f(\cdot,t)}_{E^6_{q_1}},
\]
respectively. 
We claim that $\mathcal{E}_T \subset \mathcal{F}_T$. Indeed, using $1/q_1 = 1/(2q_2) + 1/(2q)$,
\EQN{
t^{\frac14} \norm{f(\cdot,t)}_{ E^6_{q_1}}
&= t^{\frac14} \norm{ \norm{f(\cdot,t)}_{L^6(B_1(k))} }_{\ell^{q_1}(k\in\ZZ^3)} \\
&\le t^{\frac14}  \norm{ \norm{f(\cdot,t)}_{L^\infty(B_1(k))}^{1/2} \norm{f(\cdot,t)}_{L^3(B_1(k))}^{1/2} }_{\ell^{q_1}(k\in\ZZ^3)}
\\
&\le t^{\frac14}   \norm{ \norm{f(\cdot,t)}_{L^\oo(B_1(k))}^{1/2} }_{\ell^{2q_2}(k\in\ZZ^3)}\cdot  \norm{ \norm{f(\cdot,t)}_{L^3(B_1(k))}^{1/2} }_{\ell^{2q}(k\in\ZZ^3)}
\\
&= (t^{\frac12}  \norm{f(\cdot,t)}_{E^\infty_{q_2}})^{1/2} \norm{f(\cdot,t)}_{E^3_q}^{1/2}.
}
The above is true even if $q=3$ and $q_2=\I$.

By Lemma \ref{Wa-estimate} again and {H\"older inequality \eqref{Holder} using} $2q \ge q_1$ due to $q\le 3$, 
\EQS{\label{0819a}
\norm{B(f,g)}_{E^6_{q_1}}(t) 
&\lec \int_0^t  \frac1{(t-\tau)^{\frac34}} \norm{f\otimes g(\tau)}_{E^3_{q}}\, d\tau\\
&\le \int_0^t  \frac1{(t-\tau)^{\frac34}} \norm{f(\tau)}_{E^6_{q_1}} \norm{g(\tau)}_{E^6_{q_1}}\, d\tau\\
&\le \int_0^t  \frac1{(t-\tau)^{\frac34}} \tau^{-1/4}\norm{f}_{{\mathcal{F}}_T} \tau^{-1/4}\norm{g}_{{\mathcal{F}}_T}\, d\tau\\
&\lec t^{-1/4} \norm{f}_{{\mathcal{F}}_T} \norm{g}_{{\mathcal{F}}_T}.
}
Hence, %
\[
\| B(f,g)\|_{{\mathcal F}_T} \leq c_*   \|f\|_{{\mathcal{F}}_T} \|g\|_{{\mathcal{F}}_T},
\]
where $c_*$ is a universal constant. 

Concerning the caloric extension of $u_0$, we have for $\|u_0\|_{E^3_q}$ of any size that
\[
\lim_{T\to 0} \| e^{t\Delta}u_0\|_{{\mathcal F}_T}  = 0,
\]
by \eqref{vanishing-t=0} of Lemma \ref{lemma:heat.eq.convergence.data}.  
 Hence, there exists $T=T(u_0)$ so that 
\EQ{\label{small.caloric}
 \| e^{t\Delta}u_0\|_{{\mathcal F}_T} \lesssim c_*^{-1}.
}
If, on the other hand, $\|u_0\|_{E^3_q} \lesssim c_*^{-1}$, then by \eqref{u-lin-est}, %
we have  \eqref{small.caloric} for $T=\I$.
The Picard contraction theorem then guarantees the existence of a mild solution $u$ to \eqref{NS} 
so that
\[
\| u\|_{\mathcal F_T} \leq  2\| e^{t\Delta}u_0\|_{\mathcal F_T}.
\]
This solution is unique among all mild solutions $v$ with data $u_0$ satisfying $\|v\|_{\mathcal F_T}\leq 2\| e^{t\Delta}u_0\|_{\mathcal F_T}$.%

Next, we show that a solution $u\in \mathcal{F}_T$ with initial data $u_0\in E^3_q$ also belongs to $\mathcal{E}_T$. Let $\{u^{(n)}\}_{n\ge1}$ be the Picard iteration sequence in $\mathcal{F}_T$. %
By construction, 
\EQ{\label{unFTbound}
\norm{u^{(n)}}_{ {\mathcal{F}}_T}  \le  2 \|e^{t\Delta}u_0\|_{\mathcal F_T}.
}
Note that
\[
\norm{u^{(n)}}_{\mathcal{E}_T} 
\le \norm{e^{t\De} u_0}_{\mathcal{E}_T} + \norm{B(u^{(n-1)}, u^{(n-1)})}_{\mathcal{E}_T}.
\]
We now bound $B(f,g)$ in $\mathcal{E}_T$ in terms of $f$ and $g$ in $\mathcal{F}_T$ and $\mathcal{E}_T$. 
We have by Lemma \ref{Wa-estimate} {and H\"older inequality \eqref{Holder} using $q_1 \ge 6$,} \EQS{\label{ineq:E3qintegral2}
\norm{B(f,g)}_{E^3_{q}}(t) 
&\lec \int_0^t  \frac1{(t-\tau)^{\frac12}} \norm{f\otimes g(\tau)}_{E^3_{q}}\, d\tau\\
&\le \int_0^t  \frac1{(t-\tau)^{\frac12}} \norm{f(\tau)}_{E^6_{q_1}} \norm{g(\tau)}_{E^6_{q_1}}\, d\tau\\
&\le \int_0^t  \frac1{(t-\tau)^{\frac12}} \tau^{-1/4}\norm{f}_{\mathcal{F}_T} \tau^{-1/4}\norm{g}_{\mathcal{F}_T}\, d\tau\\
&\lec \norm{f}_{\mathcal{F}_T} \norm{g}_{\mathcal{F}_T}.
}
{By $\mathcal{E}_T \subset \mathcal{F}_T$,}
we have
\[
\norm{B(f,g)}_{E^3_{q}}(t) \lec \norm{f}_{\mathcal{E}_T} \norm{g}_{\mathcal{F}_T} \wedge \norm{g}_{\mathcal{E}_T} \norm{f}_{\mathcal{F}_T}.
\]

Also by Lemma \ref{Wa-estimate} {and H\"older inequality \eqref{Holder},}
\begin{align}
\nonumber
\| B(f,g)\|_{E^\I_{q_2}}(t) &\lec\int_0^t \frac 1 {(t-\tau)^{\frac 34 }} \| f\otimes g\|_{E^6_{q_1}}(\tau) \,d\tau
\\&\lec \int_0^t\frac 1 {(t-\tau)^{\frac 34 }\tau^{3/4}} 
 ( \tau^{1/2}	\|f\|_{E^\I_{q_2}}\tau^{1/4}\|g\|_{E^6_{q_1}} \wedge \tau^{1/2}\|g\|_{E^\I_{q_2}}\tau^{1/4}\|f\|_{E^6_{q_1}} )\,d\tau
\nonumber
\\&\lec t^{-\frac 1 2}(\norm{f}_{\mathcal{E}_T} \norm{g}_{\mathcal{F}_T} \wedge \norm{g}_{\mathcal{E}_T} \norm{f}_{\mathcal{F}_T}).
\label{ineq:E3qintegral3}
\end{align}

Based on the above estimates we conclude
\EQ{\label{eq3.38}
\| B(f,g)\|_{\mathcal E_T} \lec \norm{f}_{\mathcal{E}_T} \norm{g}_{\mathcal{F}_T} \wedge \norm{g}_{\mathcal{E}_T} \norm{f}_{\mathcal{F}_T}.
}

We can now conclude that $\{u^{(n)}\}$ is Cauchy in $\mathcal{E}_T$ by the calculation preceding and including \eqref{ineq:difference.picard.iterates}. {However, the smallness of the constant is now provided by \eqref{small.caloric}-\eqref{unFTbound}, not by $\norm{u_0}_{E^3_q}$.}

We now show continuity. For small data, we can try to inherit continuity from Theorem \ref{thrm:critical}. But we will provide a proof valid for general data.
We first address convergence to the initial data.
By Lemma \ref{lemma:heat.eq.convergence.data} we have
\EQ{\label{limit:zero.heat2}
\lim_{T'\to 0^+} \sup_{0<t<T'}t^{\frac 1 4} \| e^{t\Delta}u_0\|_{E^6_{q_1}} = \lim_{T'\to 0} \| e^{t\Delta}u_0\|_{{\mathcal F}_{T'}} = 0,
}
whenever $u_0\in E^3_q$. By our estimates in the class $ {\mathcal F}_{T'}$ where we are taking $T'\leq T$, we have 
\[
\| u^{(n)}\|_{{\mathcal F}_{T'}} \leq \|e^{t\Delta} u_0\|_{ {\mathcal F}_{T'}}+ \| B(u^{(n-1)},u^{(n-1)})\|_{ {\mathcal F}_{T'}}\lec \|e^{t\Delta} u_0\|_{ {\mathcal F}_{T'}}+\| u^{(n-1)}\|_{ {\mathcal F}_{T'}}^2.
\] 
From this and by induction, for any $n$ we have 
\EQN{
\lim_{T'\to 0^+} \| u^{(n)}\|_{{\mathcal F}_{T'}} = 0.
}
The limit \eqref{limit:zero.heat2}, convergence of the Picard iterates in $ {\mathcal F}_T$ and the above inequality imply that, by taking $T'$ small, we can make $\sup_{0<t<T'}t^{\frac 1 4} \| u(t)\|_{E^6_{q_1}}$ small. To elaborate,  we have 
\EQ{
\| u\|_{ {\mathcal F}_{T'}} \leq \| u - u^{(n)}\|_{ {\mathcal F}_{T'}}  + \| u^{(n)}\|_{ {\mathcal F}_{T'}}.
}
We may choose $n$ large so that the first term is small and then make the second term small by taking $T'$ small.
Hence,
\EQ{\label{limit:zero.ns2}
\lim_{T'\to 0^+} { \| u\|_{{\mathcal F}_{T'}}}= 0.
}
Using \eqref{ineq:E3qintegral2}, this implies  
\EQN{
\lim_{T'\to 0^+} \sup_{0<t<T'} \| B(u,u)\|_{E^3_q} (t)=0.
}
This and Lemma \ref{lemma:heat.eq.convergence.data} imply
\[
\lim_{t\to 0} \| u-u_0\|_{E^3_q}=0.
\]

We now prove continuity at positive times using the same argument as in the proof of Theorem \ref{thrm:critical}.  Let $t_1>0$ be fixed. We will send $t\to t_1$.  Note that by Lemma \ref{lemma:heat.eq.convergence.data} we have $e^{t\Delta}u_0 -e^{t_1\Delta}u_0\to 0$ in $E^3_q$ as $t\to t_1$. We therefore only need to show $B(u,u)(t)\to B(u,u)(t_1)$. Take $\rho$ slightly less than $1$ so that $\rho t_1<t$  and write
\EQN{
B(u,u)(t)- B(u,u)(t_1) &= \int_{\rho t_1}^t e^{(t-\tau)\Delta} \mathbb P\nb \cdot F\,d\tau 
- \int_{\rho t_1}^{t_1} e^{(t_1-\tau)\Delta} \mathbb P\nb \cdot F\,d\tau 
\\&+\int_{0}^{\rho t_1} \big(e^{(t-\rho t_1)\Delta}-e^{(t_1-\rho t_1)\Delta} \big)e^{(\rho t_1-\tau)\Delta} \mathbb P\nb \cdot F\,d\tau 
}
where $F=u\otimes u(\tau)$. For the first and second terms we have by the sequence of inequalities in \eqref{ineq:E3qintegral2} that,
\EQN{
\int_{\rho t_1}^t  \|  e^{(t-\tau)\Delta} \mathbb P\nb\cdot  F\|_{E^3_q}\,d\tau  &\lesssim  \int_{\rho t_1}^{t} \frac 1 {(t-\tau)^{\frac 1 2}\tau^\frac 1 2} \| u\|_{\mathcal F_t}^2  \,d\tau
 \lesssim \frac {(t-\rho t_1)^\frac 1 2}	{(\rho t_1)^\frac 1 2}	\|u\|_{\mathcal F_T}^2,
}
and 
\EQN{
\int_{\rho t_1}^{t_1} \|e^{(t_1-\tau)\Delta} \mathbb P\nb \cdot F\|_{E^3_q}\,d\tau  &\lesssim \int_{\rho t_1}^{{t_1}} \frac 1 {(t-\tau)^{\frac 1 2}\tau^\frac 1 2} \| u\|_{\mathcal F_t}^2  \,d\tau
 \lesssim \frac {(t_1-\rho t_1)^\frac 1 2}	{(\rho t_1)^\frac 1 2}	\|u\|_{\mathcal F_T}^2,
}
both of which can be made arbitrarily small by taking $\rho t_1$ close to $t_1$ and $t$ close to $t_1$.

For the third term we note that by Lemma \ref{lemma:heat.eq.convergence.data}, for each $0<\tau<\rho t_1$, we have
\[
\|  \big(e^{(t-\rho t_1)\Delta}-e^{(t_1-\rho t_1)\Delta} \big)e^{(\rho t_1-\tau)\Delta} \mathbb P\nb \cdot F(\tau) \|_{E^3_q}\to 0 \text{ as }t\to t_1,
\]
which follows from the fact that $e^{(\rho t_1-\tau)\Delta} \mathbb P\nb \cdot F(\tau)\in  E^3_q$, which is a consequence of Lemma \ref{Wa-estimate}.
Additionally,  
\EQN{
& \|  \big(e^{(t-\rho t_1)\Delta}-e^{(t_1-\rho t_1)\Delta} \big)e^{(\rho t_1-\tau)\Delta} \mathbb P\nb \cdot F(\tau)	\|_{E^3_q}
\\&\lesssim  
 \bigg( \frac 1 {(t-\tau)^\frac 1 2 \tau^\frac 1 2} + \frac 1 {(t_1-\tau)^\frac 1 2 \tau^\frac 1 2}\bigg) \| u\|_{\mathcal F_T}^2 \in L^1(0,\rho t_1),
}
where integration in $L^1(0,\rho t_1)$ is with respect to $s$.
So, by Lebesgue's dominated convergence theorem, 
\EQ{
\int_0^{\rho t_1 } \|  \big(e^{(t-\rho t_1)\Delta}-e^{(t_1-\rho t_1)\Delta} \big)e^{(\rho t_1-\tau)\Delta} \mathbb P\nb \cdot F(\tau)	\|_{E^3_q}\,d\tau \to 0 \text{ as }t\to t_1.
} 
The above show the continuity of $u(t)$ at positive times.

\medskip

We now prove the spacetime integral bound \eqref{eq-thmIII-Ebound} for $p\in (3,9]$ and $\frac2s + \frac3p = 1$.  Note that we exclude $p=3$, i.e., $s=\infty$.
By imbedding \eqref{eq1.4}, we may assume $m<\infty$. (We do not take $m=q$ since we need $q<m$ for global existence).\
Denote the Banach space
\[
X_T = \mathcal E_T \cap E^{s,p}_{T,m}.
\]
For the linear term, by Lemmas \ref{Wa-estimate} and \ref{lemma:linear.heat.amalgam.spacetime-new}, 
\EQS{\label{beta0}
\norm{e^{t\De} u_0}_{X_T} 
&= \sup_{0<t<T} \norm{e^{t\De} u_0}_{E^3_q} + \sup_{0<t<T} t^{\frac12}\norm{e^{t\De} u_0}_{E^\infty_{q_2}} + \norm{e^{t\De} u_0}_{E^{s,p}_{T,m}}\\
&\le C_3(1+T^{\beta_0})\norm{u_0}_{E^3_q},
}
for any $\be_0 \in [0,\infty)$ and $\be_0 > \al_0=\frac 3{2m}-\frac3{2q}+\frac 1s$. 
Note that
\EQ{\label{smallspacetimeintegral}
\lim_{T \to 0_+ } \norm{e^{t\De} u_0}_{E^{s,p}_{T,m}} = 0. 
}
Here is a proof using $m <\infty$: Denote
\[
a_k(t) = \norm{e^{t\De} u_0}_{L^s(0,t; L^p(B_1(k))} , \quad k \in \ZZ^3.
\]
For any $\e>0$, since $\{a_k(T)\}_k \in \ell^m$, there is $N>0$ such that
$
\sum_{|k|>N} a_k(T)^m \le \e^m$.
Then, since $s<\infty$, there is $t\in (0,T)$ so that
\[
\sup_{|k|\le N} a_k(t) \le  \frac{\e}{N^{3/m}}.
\]
For any $\tau \in (0,t]$, we have
\EQN{
\sum_{k\in \ZZ} a_k(\tau)^m &\le \sum_{|k|\le N} a_k(t)^m + \sum_{|k|>N} a_k(T)^m
 \le C N^3 \bke{\frac \e{N^{3/m}}}^m + \e^m = C\e^m.
}
Hence $\norm{a_k(\tau)}_{\ell^m(k \in \ZZ^3)} \le C \e$ for all $\tau \le t$. This shows \eqref{smallspacetimeintegral}.

For the bilinear term, by Lemma \ref{lemma:duhamel.amalgam} with $\td s=s/2$, $\td p=p/2$, and $\td m = \max(1, m/2)$, so that
\EQ{
\si=0, \quad
\al=
\left\{
\begin{aligned}
\tfrac12 - \tfrac3{2m} - \tfrac1s ,\quad &\text{if}\quad 2 \le m \le \infty,\\[1mm]
-1 + \tfrac3{2m} - \tfrac1s,\quad &\text{if}\quad 1<m<2,
\end{aligned}
\right. \quad \al < 1-\tfrac1s,
}
we have
\EQN{
\norm{B(f,g)}_{E^{s,p}_{T,m}}
&\le C_4 (1+ T^{\be}) \norm{f\otimes g}_{E^{\frac{s}2,\frac p2}_{T,\td m}}
}
for any $\be \in [0,1-\frac 1s]$ and $\be > \al$. Note
\[
\norm{f\otimes g}_{E^{\frac s2,\frac p2}_{T,\td m}}
\le \norm{f}_{E^{s,p}_{T,2\td m}} \norm{g}_{E^{s,p}_{T,2\td m}}\le \norm{f}_{E^{s,p}_{T, m}} \norm{g}_{E^{s,p}_{T, m}}
\]
no matter $m\ge2$ or $1<m<2$.
We conclude, also using \eqref{eq3.38},
\EQS{\label{beta1}
\norm{B(f,g)}_{E^{s,p}_{T, m}} &\leq  C_4 (   1+ T^{\be}   ) \norm{f}_{E^{s,p}_{T, m}} \norm{g}_{E^{s,p}_{T, m}},
\\
\norm{B(f,g)}_{X_T} &\leq  2C_4 (   1+ T^{\be}   ) \norm{f}_{X_T} \norm{g}_{X_T}.
}

By \eqref{smallspacetimeintegral}, we can find $T_1 \in (0,T]$ so that
\[
\norm{e^{t\De} u_0}_{E^{s,p}_{T_1,m}}  \le \de= [4C_4 (   1+ T^{\be}   )]^{-1}.
\]
Then the Picard sequence $u^{(k)}$ satisfies $\norm{u^{(k)}}_{E^{s,p}_{T_1,m}}  \le 2\de$ for all $k \in \NN$, and we get
$\norm{u}_{E^{s,p}_{T_1,m}}  \le 2\de$.
Thus, $u$ satisfies the spacetime integral bound \eqref{eq-thmIII-Ebound}.

\medskip

We next show the global $E^{s,p}_{T,m}$-estimates when $u_0$ is sufficiently small in $E^3_q$.
We need to avoid $T$-dependence in the constants. In other words, we want to choose $\be_0=0$ in \eqref{beta0} and $\be=0$ in \eqref{beta1}.

To choose $\be_0=0$ in \eqref{beta0}, we first require $\frac2s + \frac3m \le \frac3q$ which makes $\alpha_0=\frac 3{2m}-\frac3{2q}+\frac 1s\leq 0$ in Lemma \ref{lemma:linear.heat.amalgam.spacetime-new}. If $\frac2s + \frac3m = \frac3q$ then $\alpha_0=0$ and we require additionally that $1<q<m<\infty$. Then we can take $\be_0=0$.

We now consider the conditions for $\be=0$ in \eqref{beta1}. When $m\ge2$, we assume further that $\frac2s + \frac3m \ge 1$ so that $\al\le 0$. We can take $\be=0$ when $\al=0$ since the condition $1<\td m < m< \infty$ in Lemma \ref{lemma:duhamel.amalgam} is met. Note $\td m=1$ when $m=2$, but then $\al<0$.

If $1<m<2$,  we assume further that $\frac3m<\frac2s + 2=3-\frac3p$. Then $\al<0$ and we can take $\be=0$.

Using \eqref{beta0} and \eqref{beta1} with $\be_0=\be=0$, we can prove the bound of $u$ in $E^{s,p}_{T=\infty,m}$ if $ \norm{u_0}_{E^p_q} < [32C_3 C_4 ]^{-1}$. 

Note that all conditions when $m \ge 2$, in particular the upper bound
 $\frac2s + \frac3m \ge 1$, are satisfied for $m=p$. Once we have shown $u \in E^{s,p}_{T=\infty,m}$ for one $m$, we have $u \in E^{s,p}_{T=\infty,\td m}$ for all $\td m \in [m,\infty]$. Hence the condition $\frac2s + \frac3m \ge 1$ can be removed.

We finally consider the $L^s_T E^p_m$ estimate of $u$. {For simplicity, we only consider $T=\infty$.} Fix $s\in [3,\infty)$ and $q\in[1,3]$, and let $p$ be given by $\frac3p+\frac 2s=1$. 
By Lemma \ref{lem-bilinear-LsEpm} with
$\td p = p/2$, $\td s=s/2$ and $\td m\ge1$ such that $\frac1{\td m} - \frac1m=\frac1{\td p} - \frac1p = \frac 1p$, and by H\"older inequality, we get
\EQN{
\norm{B(f,g)}_{L^s_TE^p_m} \lec \norm{f\otimes g}_{L^{\frac{s}2}_TE^{\frac{p}2}_{\td m}}
&\lec \norm{f}_{L^s_TE^p_m} \norm{g}_{L^s_TE^p_{p}},
\\
&\lec \norm{f}_{L^s_TE^p_m} \norm{g}_{L^s_TE^p_m},\ \text{ if }p\ge m.
}
The condition $\td m \ge 1$ is the same as $m \ge p'{=\frac{p}{p-1}}$. Hence we have
\EQ{\label{eq3.47}
\norm{B(f,g)}_{L^s_TE^p_m} \lec \norm{f}_{L^s_TE^p_m} \norm{g}_{L^s_TE^p_m},\ \text{ if }p'\le m\le p.
}

Denote the set of acceptable $m$ that we can prove $u \in L^s E^p_m$ as $\cM(s,q)$.
Since $L^s E^p_m \subset L^s E^p_{ m_2}$ if $m <  m_2$, ,  the region $\cM(s,q)$ of acceptable $m$, if non-empty, is an interval of the form
\EQ{\label{m-region}
\underline m < m \le \infty, \quad \text{or}\quad \underline m \le  m \le \infty,
}
with $\underline m =\underline m(s,q)\in [1,\infty]$. We expect $\underline m \ge q$ due to Example \ref{example2} for the heat equation.

Define $m_1$ by
\[
\frac 1{m_1} = \frac 1q - \frac 2{3s}.
\]
As $s<\infty$, $q\le 3$  and
$\frac 1{p} = \frac 13 - \frac 2{3s}$,
we have 
\EQ{\label{m1.range}
q < m_1 \le p.
}

By Lemma \ref{lemma:linear.heat.amalgam.spacetime-new} with $d=r=3$, we have
\EQ{\label{eq3.50}
\norm{e^{t\De} f}_{L^s_{T=\infty} E^p_{m} }  
\lesssim  \|f\|_{E^3_q},
}
with $T$-independent constants, if one of the following holds:
\EN{
\item [A.] $m_1 \le m\le s$, (and $m_1<m\le s$ if $q=1$), %

\item [E$_1$.] $ s<p=m$,  

\item [E$_2$.] $s<m$ and $\frac 1q \ge\frac{5}{3s}$.
}
If the linear estimate \eqref{eq3.50} holds, and if $p'\le m \le p$ so that the bilinear estimate \eqref{eq3.47} holds, then we can prove that the Picard sequence $u^{(n)}$ converges in $L^s E^p_m$, if $\norm{u_0}_{E^q}$ is sufficiently small.

Case A has nontrivial $m$ as soon as $m_1 \le s$, i.e., $\frac 1q \ge\frac{5}{3s}$. Note that we have strict inequality $m_1<s$ when $q=1$. 
By \eqref{m1.range} and $p'<2<s$,
the number 
$$m^*(s,q)=\max(p', m_1)$$ 
is in both $[p',p]$ and $[m_1,s]$.
Hence the Picard sequence $u^{(n)}$ converges in $L^s E^p_m$ for $m=m^*$, or for $m$ slightly larger than $m^*$ when $q=1$ and $3 \le s\le 4$  
{(When $q=1$ and $4<s<\infty$, we are in region I of Figure \ref{fig:m*}, and $m^*=p'>m_1$.)} 
By imbedding, $u \in L^s E^p_m$ for all $m > m^*$.

Case E$_1$ applies when $3\le s<5$ and we can take $m=p$. In particular, it covers region III: $1\le q \le 3 \le s <5$ and $\frac 1q <\frac{5}{3s}$. %
In region III we define $m^*(s,q)=p$.

Case E$_2$ requires $\frac 1q \ge\frac{5}{3s}$ and does not give smaller $m$ than Case A.

This completes the proof of $L^s E^p_m$-estimates, and concludes the proof of
Theorem \ref{thrm:critical2}.
\end{proof}

\section{Related remarks on weak solutions}\label{Sec4}

\subsection{Global weak solutions in $E^2_q$ for $1\leq q<2$}\label{sec:global-weak}
  
In \cite{BT4} weak solutions are constructed by the first and third authors for data in $E^2_q$ where $2\leq q<\I$. These solutions are viewed as bridging the gap between Leray weak solutions where the data is in $L^2=E^2_2$ and the time-global weak solutions of Lemari\'e-Rieusset \cite{LR2} where the data is in $E^2$. The motivation for \cite{BT4} was to identify scalings at which properties for Leray solutions, such as eventual regularity, and Lemari\'e-Rieusset solutions break down. It appears that the construction in \cite{BT4} can be extended to amalgam spaces $E^2_q$ where $1 \leq q< 2$.
In this subsection, we extend the range of the exponent $q$ in \cite[Theorem 1.3 (Eventual regularity), Theorem 1.4 (Explicit growth rate), Theorem 1.5 (Global existence)]{BT4} down to $q=1$.
The structure of the proofs is similar, but parts of the old proof break down for small $q${---for example, in the last display of the proof of \cite[Lemma 4.1]{BT4} (see \cite[p.2013]{BT4}), the first term $\| \tilde K(k)*(\alpha^2)\|_{q/2}$ is infinite for $q$ close to 1.   To address this and to simplify the overall argument, we use the fact that the time-scale in Lemma \ref{lem.A0qbound} can be pushed to large times by passing to large scales. This avoids the iterative extension of a time-local solution to a time-global solution in \cite{BT4}, which involves solving a perturbed system.  This idea only works for $q<6$ (see   Lemma \ref{lem.A0qbound}  for $1\leq q <2$ and \cite[Lemma 3.1]{BT4}  for $2\leq q<6$) and so cannot be used in place of the construction in \cite{BT4} for $6\leq q< \I$.
}

\subsubsection{Eventual regularity}

We recall the notation $u\in\mathcal N(u_0)$ in \cite[Definition 1.1]{BT4} meaning that $u$ is a local energy solution to the Navier-Stokes equations \eqref{NS} {in $\R^3$}, and define as in \cite[\S 2]{BT4} the quantities
\[
N^0_{q,R}(u_0) = \frac1R \bke{\sum_{k\in\ZZ^3}\bke{\int_{B_R(kR)} |u_0|^2\, dx}^{q/2}}^{2/q},\ 1\le q<\infty,
\]
\[
N^0_{\infty,R} = N^0_R := \sup_{x_0\in\R^3} \frac1R \int_{B_R(x_0)} |u_0|^2\, dx.
\]
By the concavity of $f(t) = t^p$, $0<p<1$,
\EQ{\label{concave}
f(a+b)\le f(a)+f(b),\quad\textstyle
f(\sum_{i=1}^n a_i ) \le \sum_{i=1}^n f(a_i ), \quad \text{if }a_i\ge0.
}

We prove the following lemma corresponding to \cite[Lemma 2.2]{BT4}.

\begin{lemma}\label{bt4-lem-2.2}
Let $1\le q<2$. If $u_0\in E^2_q$, then 
\[
\lim_{R\to\infty} N^0_R(u_0) = 0.
\]
\end{lemma}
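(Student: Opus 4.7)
The plan is to exploit the embedding $E^2_q \subset L^2(\R^3)$ valid for $1\le q\le 2$, which reduces the statement to a one-line tail estimate for an $L^2$ function.

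First, I would invoke the inclusion \eqref{eq1.4}: $E^2_q \subset E^2_m$ for $m\ge q$. Taking $m=2$ and writing $c_k=\|u_0\|_{L^2(B_1(k))}$, the hypothesis $u_0\in E^2_q$ says $\{c_k\}\in \ell^q(\ZZ^3)$, and since $q\le 2$ the usual embedding $\ell^q\subset \ell^2$ gives $\{c_k\}\in \ell^2$. Summing,
\[
\|u_0\|_{L^2(\R^3)}^2 \lec \sum_{k\in \ZZ^3} c_k^2 \lec \Bigl(\sum_{k\in \ZZ^3} c_k^q\Bigr)^{2/q} = \|u_0\|_{E^2_q}^2 <\infty,
\]
so $u_0\in L^2(\R^3)$ with $\|u_0\|_{L^2}\lec \|u_0\|_{E^2_q}$.

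Second, using this global $L^2$ bound and the definition of $N^0_R$, for every $x_0\in \R^3$,
\[
\frac1R\int_{B_R(x_0)}|u_0|^2\,dx \le \frac{\|u_0\|_{L^2(\R^3)}^2}{R}.
\]
Taking the supremum over $x_0$ yields $N^0_R(u_0)\le \|u_0\|_{L^2}^2/R$, which tends to $0$ as $R\to\infty$, completing the argument.

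There is no real obstacle for this range: the fact $q<2$ (together with $q\ge1$ making $\ell^q$ meaningful) is precisely what forces finite global $L^2$ mass and trivializes the sublinear growth required by $N^0_R$. By contrast, the analogous statement for $q\ge 2$ in \cite{BT4} is genuinely delicate because $u_0$ need not lie in $L^2$, which is why concavity-type arguments are used there; for $1\le q<2$ the embedding shortcut above suffices and no concavity inequality like \eqref{concave} is needed.
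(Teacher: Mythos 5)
Your argument is correct: for $1\le q\le 2$ the balls $B_1(k)$, $k\in\ZZ^3$, cover $\R^3$, so $\|u_0\|_{L^2(\R^3)}^2\le\sum_k\|u_0\|_{L^2(B_1(k))}^2=\|c\|_{\ell^2}^2\le\|c\|_{\ell^q}^2=\|u_0\|_{E^2_q}^2$, and then $N^0_R(u_0)\le R^{-1}\|u_0\|_{L^2}^2\to0$ is immediate. This is a genuinely different and more elementary route than the paper's. The paper does not pass through the global $L^2$ norm; instead it works directly with the lattice quantity $N^0_{q,R}$, covering $\ZZ^3$ by the sets $S_k=\{i:|i-kR|<R\}$ with overlap multiplicity $27$ and using the subadditivity \eqref{concave} of $t\mapsto t^{q/2}$ (valid precisely because $q/2<1$) to obtain the quantitative bound \eqref{eq-3.17-BT4}, namely $N^0_{q,R}(u_0)\le CR^{-1}\|u_0\|_{E^2_q}^2$, from which $N^0_R\le N^0_{q,R}$ gives the lemma. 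What the paper's longer argument buys is exactly this stronger $\ell^{q/2}$-level estimate: it is what is invoked later to get $A_{0,q}(R)=RN^0_{q,R}(u_0)\lec\|u_0\|_{E^2_q}^2$ in Theorem \ref{thm-1.4BT4} and Lemma \ref{BT8-lem3.3}, and your sup-level bound on $N^0_R=N^0_{\infty,R}$ alone would not suffice there. For the lemma as literally stated, though, your proof is complete (and the paper itself remarks, after Theorem \ref{bt4-thm-1.3}, that $E^2_q\subset L^2$ for $q<2$ already trivializes eventual regularity). One small correction to your closing aside: the concavity inequality \eqref{concave} is used in the present paper precisely \emph{because} $q<2$ makes $q/2<1$; in the range $q\ge2$ treated in \cite{BT4} one has $q/2\ge1$ and the triangle inequality in $\ell^{q/2}$ is available instead, so it is not concavity that handles the non-$L^2$ case there.
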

\begin{proof}
Denote $a_i = \norm{u_0}_{L^2(B_1(i))}$, $i\in\ZZ^3$.
Fix $k=(k_1,k_2,k_3),\, k'=(k_1',k_2',k_3')\in \ZZ^3$ so that $|k-k'|\geq 2$. Without loss of generality $|k_1- k_1'|\geq 2$. 
Suppose $i$ satisfies 
$
|i-kR|<R,
$
then, because we have $|k_1 R - k_1'R |\geq 2R$, which implies $R\leq 2R -|i-kR|\leq |i-k'R| $, it follows that 
$
|i-k' R|\geq R.
$
So, sets of the form $ S_k= \{  i: |i-kR|<R \}$ form a  cover of $\ZZ^3$ in which $S_k$ only overlaps those $S_{k'}$ where $|k' -k|<2$.  Hence, there are $27$ sets that can overlap $S_k$. 
Using this fact and {concavity \eqref{concave}} we have
for $R\ge1$,
\EQS{\label{eq-3.17-BT4}
N^0_{q,R}(u_0) 
&\le \frac{C}R \bke{\sum_{k\in\ZZ^3} \bke{\sum_{|i-kR|<R} a_i^2}^{q/2} }^{2/q}
\le   \frac{C}R \bke{\sum_{k\in\ZZ^3} {\sum_{|i-kR|<R} a_i^q} }^{2/q}
\\
&\le   \frac{C}R \bke{27 \sum_{i\in\ZZ^3} a_i^q  }^{2/q}
\le \frac{C}R \norm{u_0}_{E^2_q}^{{2}}.
}
The lemma follows as $N^0_R(u_0) \le N^0_{q,R}(u_0)$.
\end{proof}

Using Lemma \ref{bt4-lem-2.2}, the following eventual regularity result,  which corresponds to \cite[Theorem 1.3]{BT4}, is a direct consequence of {\cite[Theorem 1.2 (1)]{BT8} (also} \cite[Theorem 2.1]{BT4}).

\begin{theorem}[Eventual regularity in $E^2_q$]\label{bt4-thm-1.3}%
Assume $u_0\in E^2_q$ where $1\le q<2$, is divergence free and $u\in\mathcal N(u_0)$. 
Then $u$ has eventual regularity, i.e., there is $t_1<\infty$ such that $u$ is regular at $(x,t)$ whenever $t\ge t_1$, and 
\[
\norm{u(\cdot,t)}_{L^\infty} \lec t^{1/2},
\]
for sufficiently large $t$.
\end{theorem}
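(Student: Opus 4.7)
The plan is to invoke a ready-made eventual regularity criterion for local energy solutions and then verify its only hypothesis. The criterion is \cite[Theorem 1.2(1)]{BT8} (equivalently \cite[Theorem 2.1]{BT4}), which states that whenever $u\in\mathcal N(u_0)$ and the initial data satisfies the uniform decay condition $\lim_{R\to\infty} N^0_R(u_0)=0$, then $u$ is regular at all sufficiently large times and enjoys $\|u(\cdot,t)\|_{L^\infty}\lesssim t^{1/2}$ for $t$ large. So the proof reduces to producing this decay for $u_0\in E^2_q$, $1\le q<2$.

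Producing the decay is exactly Lemma \ref{bt4-lem-2.2}, whose proof I would outline as follows. Since $q<2$, the map $t\mapsto t^{q/2}$ is concave, so the elementary inequality \eqref{concave} applies. Partitioning $\mathbb Z^3$ into the overlapping families $S_k=\{i\in\mathbb Z^3:|i-kR|<R\}$ (any two of which overlap only when their $R$-lattice centers are within distance $2R$, giving a universal overlap constant at most $27$), one writes
\[
N^0_{q,R}(u_0)\le \frac{C}{R}\Bigl(\sum_{k\in\mathbb Z^3}\Bigl(\sum_{i\in S_k}a_i^2\Bigr)^{q/2}\Bigr)^{2/q}
\le \frac{C}{R}\Bigl(\sum_{k\in\mathbb Z^3}\sum_{i\in S_k}a_i^q\Bigr)^{2/q}
\le \frac{C}{R}\|u_0\|_{E^2_q}^2,
\]
where $a_i=\|u_0\|_{L^2(B_1(i))}$. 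Since trivially $N^0_R(u_0)\le N^0_{q,R}(u_0)$, this decays like $R^{-1}$ and in particular tends to $0$.

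Combining the two ingredients completes the proof: Lemma \ref{bt4-lem-2.2} supplies the hypothesis; the cited theorem supplies both the eventual regularity and the quantitative $L^\infty$ bound.

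The ``main obstacle'' here is conceptual rather than technical. The only place where the exponent restriction $1\le q<2$ is genuinely used is in replacing the inner $\ell^2$ norm on $S_k$ by the $\ell^q$ norm via concavity of $t^{q/2}$; for $q=2$ this step loses its gain and only reproduces $\|u_0\|_{L^2}^2$ with no $R^{-1}$ factor. Thus one must recognize that the subcritical exponent range is precisely what yields the uniform decay needed to feed the black-box criterion, while nothing in the range $1\le q<2$ (including the endpoint $q=1$) requires separate treatment.
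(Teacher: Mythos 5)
Your proposal is correct and follows essentially the same route as the paper: the paper also deduces the theorem directly from \cite[Theorem 1.2 (1)]{BT8} (equivalently \cite[Theorem 2.1]{BT4}) after verifying the hypothesis $\lim_{R\to\infty}N^0_R(u_0)=0$ via Lemma \ref{bt4-lem-2.2}, whose proof you reproduce faithfully (same overlapping cover $S_k$ with overlap constant $27$ and the concavity inequality \eqref{concave}, yielding $N^0_{q,R}(u_0)\lec R^{-1}\norm{u_0}_{E^2_q}^2$). The only thing you omit is the paper's side remark that the result alternatively follows from \cite[Theorem 1.3]{BT4} via the embedding $E^2_q\subset L^2$ for $q<2$, but this is not needed for the argument.
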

{In fact, Theorem \ref{bt4-thm-1.3} also follows from \cite[Theorem 1.3]{BT4} without using Lemma \ref{bt4-lem-2.2} since $u_0\in E^2_q \subset L^2$ for $q<2$. However, Lemma \ref{bt4-lem-2.2} will also be useful in the proof of Theorem \ref{thm-1.4BT4} and Lemma \ref{BT8-lem3.3}.}

\subsubsection{A priori bounds and explicit growth rate}

Now we extend the main tool involved in proving the global existence, an \textit{a priori} estimate \cite[Lemma 3.1]{BT4} for $2\leq q<\I$, to the range $1 \leq q< 2$.    Note that, aside from changing the range of $q$, the only difference here compared to \cite{BT4} is that we have restricted the permissible values of $\sigma$ in \eqref{th2.2-2} and changed the definitions  of $\la_0$ and $\la_R$. {Recall the $\ell^q$ local energy space ${\bf LE}_q(0,T)$ is defined by the norm \eqref{ETq.def}.}

\begin{lemma}\label{lem.A0qbound}
Assume $u_0\in E^2_q$ for some $1\leq q<  2$ 
 is divergence free and that  $u\in\mathcal N(u_0)$ satisfies, for some  $T_2>0$, 
 \begin{equation}\label{th2.2-0}
{\norm{u}_{{\bf LE}_q(0,T_1)}<\infty,
\quad \text{for all }T_1 \in (0,T_2).}
\end{equation}
Then there are positive constants $C_1$ and $\la_0<1$, both independent of $q$ and $R$ such that, for all $R>0$ with $\la_R R^2\le T_2$,
\begin{equation}\label{th2.2-1}
\bigg\|\esssup_{0\leq t \leq \la_R R^2} \int_{B_R(x_0R) }\frac {|u|^2}2  \,dx+ \int_0^{\la_R R^2}\int_{B_R(x_0R) } |\nabla u|^2\,dx\,dt \bigg\| _{\ell^{q/2}(x_0\in \ZZ^3)}
\leq  C_1 A_{0,q}(R),
\end{equation}
where 
\[
A_{0,q}(R)  = R N^0_{q,R}  =  \bigg\|  \int_{B_R(x_0R)  }  |u_0(x)|^2 \,dx   \bigg\|_{\ell^{q/2}(x_0\in \ZZ^3)} ,
\]
and 
\[
\la_R =
\min \bigg\{\la_0,  \la_0 R^2, \frac{\la_0 R^2}{A_{0,q}(R)^{2}}\bigg\}	. 
\]
Furthermore, for all $R>0$ and $\sigma \geq \frac 3 5$,   
\begin{equation}\label{th2.2-2}
\bigg\|\int_0^{\la_R R^2}\!\!\int_{B_R(x_0R) } 
|u|^{\frac {10}3} +|\pi-c_{Rx_0,R}(t)|^{\frac53}\,dx\,dt \bigg\|_{\ell^{{\sigma}}(x_0 \in \ZZ^3)}
\le  C A_{0,q}(R)^{\frac 53},
\end{equation}
where $c_{Rx_0,R}$ is a function of time given in the definition of local energy solutions in \cite{BT4}.
 \end{lemma}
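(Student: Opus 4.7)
The plan is to adapt the proof of \cite[Lemma 3.1]{BT4}, which treats $2 \le q < \infty$, to the quasi-normed regime $q/2 < 1$. First, for each $x_0 \in \ZZ^3$ I would apply the local energy inequality of the Navier-Stokes equations against a cutoff $\phi_{x_0,R}$ concentrated on $B_R(x_0 R)$ to bound
\[
\mathcal E_{x_0}(T) := \esssup_{0 \le t \le T}\int_{B_R(x_0 R)} \tfrac{|u|^2}{2}\,dx + \int_0^T\!\!\int_{B_R(x_0 R)} |\nabla u|^2\,dx\,dt
\]
by $\int_{B_{2R}(x_0 R)} |u_0|^2$ plus the usual heat-absorption, cubic-transport, and pressure-velocity contributions on a slight enlargement of the cube; the assumption \eqref{th2.2-0} is what enables the $\ell^{q/2}$-summation to make sense at each time slice. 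I then take the $\ell^{q/2}(\ZZ^3)$ quasi-norm in $x_0$ and close the resulting nonlinear inequality.

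The key new ingredient over \cite{BT4} is that, since $q/2 < 1$, Young's convolution inequality on $\ZZ^3$ fails --- indeed, the term $\|\widetilde K * \alpha^2\|_{\ell^{q/2}}$ at the end of the proof of \cite[Lemma 4.1]{BT4} is infinite for $q$ close to $1$. I would replace Young by the concavity property \eqref{concave}: for nonnegative $a_{x_0,k}$,
\[
\sum_{x_0}\Bigl(\sum_k a_{x_0,k}\Bigr)^{q/2} \le \sum_{x_0,k} a_{x_0,k}^{q/2}.
\]
This is used for the initial-data piece (as in the proof of Lemma \ref{bt4-lem-2.2}) and, crucially, for the far-field pressure. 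Decomposing $\pi - c_{Rx_0,R}(t) = \pi^{\mathrm{loc}}_{x_0} + \pi^{\mathrm{far}}_{x_0}$ with $\pi^{\mathrm{loc}}_{x_0}$ the Calder\'on-Zygmund transform of $(u\otimes u)\chi_{\{|y-x_0 R|\lec R\}}$, the far-field piece has the discrete-convolution form $\sum_k K(x_0-k) b_k$, and concavity yields $\ell^{q/2}$-control provided $\|K\|_{\ell^{q/2}(\ZZ^3\setminus 0)} < \infty$; this forces us to subtract enough lower-order Taylor terms of the pressure Riesz kernel at the base point $x_0 R$ (encoded in $c_{Rx_0,R}$) to obtain a sufficiently fast decay of $K$.

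Once the pressure is controlled, the interpolation $\|u\|_{L^{10/3}(Q_{x_0})}^{10/3} \lec \mathcal E_{x_0}(T)^{5/3}$ combined with monotonicity of $\ell^p$-norms (so that $\|X^{3/2}\|_{\ell^{q/2}} \le \|X\|_{\ell^{q/2}}^{3/2}$) yields a closed inequality of the form
\[
\|\mathcal E\|_{\ell^{q/2}} \le C_0 A_{0,q}(R) + C_1 (T^a R^{-b})\|\mathcal E\|_{\ell^{q/2}}^{3/2},
\]
with exponents matching those in \cite{BT4}. The choice $T = \la_R R^2$ with $\la_R = \min\{\la_0, \la_0 R^2, \la_0 R^2/A_{0,q}(R)^2\}$ makes the superlinear term a small fraction of the left-hand side, and the estimate closes by a continuity/absorption argument, yielding \eqref{th2.2-1}. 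Note that, by Lemma \ref{bt4-lem-2.2}, $A_{0,q}(R)$ stays bounded as $R \to \infty$, so $\la_R R^2 \to \infty$; this is precisely what allows one to reach arbitrarily large times by passing to large scales, replacing the iterative time-stepping of \cite{BT4}. Finally, \eqref{th2.2-2} follows from $\|u\|_{L^{10/3}(Q_{x_0})}^{10/3} \lec \mathcal E_{x_0}^{5/3}$ and Calder\'on-Zygmund bounds on the pressure, summed in $\ell^\sigma$; the threshold $\sigma \ge 3/5$ ensures $5\sigma/3 \ge q/2$ uniformly in $q \in [1,2)$ (tight at the endpoint $q=2$), which transfers the $\ell^{q/2}$ bound on $\mathcal E$ to an $\ell^\sigma$ bound on $\mathcal E^{5/3}$.

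The main technical obstacle is the far-field pressure estimate in $\ell^{q/2}$ with $q/2<1$: the natural Young's convolution approach of \cite{BT4} breaks down, and the concavity substitute only succeeds when the far-field kernel has decay substantially beyond the $|k|^{-3}$ of the bare Riesz transform, so additional cancellation inside $c_{Rx_0,R}(t)$ (or an equivalent reorganization of the sum) must be arranged. Once that step is in place, the cubic-term absorption and the Sobolev interpolation are direct adaptations of \cite{BT4}.
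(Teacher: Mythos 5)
Your overall architecture matches the paper's: localize the energy inequality on balls $B_R(\kappa)$, $\kappa\in R\ZZ^3$, use the concavity bound \eqref{concave} in place of the triangle inequality for the finitely-overlapping local sums and the initial-data term, interpolate for the $L^{10/3}$ bound, and use Calder\'on--Zygmund for the near-field pressure. You have also correctly located the obstruction: Young's convolution inequality on $\ell^{q/2}$ fails when $q/2<1$. However, your remedy for the far-field pressure is where the argument breaks. You propose to apply concavity directly to the convolution, which forces $\|K\|_{\ell^{q/2}(\ZZ^3\setminus 0)}<\infty$, and to achieve this by subtracting additional Taylor terms of the Riesz kernel at $x_0R$ ``encoded in $c_{Rx_0,R}$''. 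This does not work as stated: only the zeroth-order Taylor term is constant in $x$ and hence annihilated by $\int c(t)\,u\cdot\nb\phi\,dx=0$; the first-order correction $(x-\kappa)\cdot\nb K(\kappa-y)$ is linear in $x$, cannot be absorbed into $c_{Rx_0,R}(t)$, and reappears in the local energy inequality as a term whose coefficient is again a discrete convolution with an $|k|^{-4}$ kernel --- so the difficulty is relocated, not removed. Moreover, $c_{Rx_0,R}$ is fixed by \cite[Definition 1.1]{BT4} (the lemma asserts \eqref{th2.2-2} for precisely that normalization), so redefining the pressure decomposition changes the statement being proved.

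The paper's actual device is different and cheaper: it keeps the standard far-field kernel $\overline K(x)=|x/R|^{-4}$ (one subtraction only, so $\overline K\in\ell^1$ but not $\ell^{q/2}$), and in estimate \eqref{6.2.22.a} applies $ab\lesssim a^2+b^2$ to the product $\|u\|_{L^\infty L^2}\cdot|\overline K*e_{R,\la}|$ so that the convolution enters the energy inequality at power $2$ rather than the power $3/2$ of \cite[(3.12)]{BT4}. After raising to the power $q/2$, the convolution term is then measured in $\ell^{q}$ with $q\ge1$, where Young's inequality $\ell^1*\ell^q\to\ell^q$ is available, followed by $\|e\|_{\ell^q}\le\|e\|_{\ell^{q/2}}$. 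The price is a quadratic term $C(\la/R^{1/2})^{q/2}E_{R,q,\la}^2$ in the closed inequality \eqref{Eq.est} --- absent from your proposed closed inequality, which contains only the $3/2$-power term --- and it is exactly this quadratic term that dictates the factor $A_{0,q}(R)^{-2}$ in the definition of $\la_R$. So while your diagnosis of the obstacle is right and the rest of your outline is sound, the far-field pressure step as you propose it would fail, and the continuity/absorption argument you describe does not match the nonlinearity structure that actually arises.
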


The bound \eqref{th2.2-1} is stronger than the \textit{a priori} bound for Leray's weak solutions, 
as 
\EQN{&\sup_{0\leq t\leq \la_R R^2}
\int_{\R^3}\frac  {|u|^2} 2\,dx + \int_0^{\la_R R^2}\int_{\R^3} |\nabla u|^2\,dx\,dt
\\&\lesssim  \bigg\|  \esssup_{0\leq t \leq \la_R R^2} \int_{B_R(x_0R) }\frac {|u|^2}2  \,dx\bigg\|_{\ell^1} +  \bigg\|\int_0^{\la_R R^2}\int_{B_R(x_0R) } |\nabla u|^2\,dx\,dt\bigg\|_{\ell^1} 
\\&\leq \bigg\|\esssup_{0\leq t \leq \la_R R^2} \int_{B_R(x_0R) }\frac {|u|^2}2  \,dx+ \int_0^{\la_R R^2}\int_{B_R(x_0R) } |\nabla u|^2\,dx\,dt \bigg\| _{\ell^{q/2}(x_0\in \ZZ^3)}.
}
{The exponent $\si \ge \frac 35$ in \eqref{th2.2-2} is larger than $\frac{3q}{10}$ in \cite[(3.6)]{BT4} when $q<2$.}

\begin{proof}[Proof of Lemma \ref{lem.A0qbound}]

Let $\phi_0\in C_c^\I(\R^3)$ be radial, non-increasing, identically $1$ on $B_1(0)$, supported on $B_2(0)$, and satisfy $|\nabla \phi_0(x)|\lesssim  1$ and $|\nabla \phi_0^{1/2}(x)|\lesssim 1$.  Let $R>0$ be as in the statement of the lemma. Let $\phi(x)=\phi_0(x/R)$.  Let $0 < \la \le 1$.

We decompose the pressure according to \cite[Definition 1.1]{BT4}, namely for a fixed ball {$B_{2R}(\kappa)$ where $\kappa\in \R^3$}, we write
\EQN{\label{eq.pressure}
\pi(x,t)&=-\Delta^{-1}\div \div [(u\otimes u )\chi_{4R} ({x}-\ka)]
\\&\quad - \int_{\R^3} (K(x-y) - K(\ka -y)) (u\otimes u)(y,t)(1-\chi_{4R}(y-\ka))\,dy+c_{{\kappa},R}(t)
\\&=\pi_1(x,t)+\pi_2(x,t) +c_{{\kappa},R}(t).
}

For $\kappa\in R\ZZ^3$, let \begin{align*}
e_{R,\la}(\ka):=& \esssup_{0\leq t\leq \la R^2}\int |u(t)|^2 \phi(x-\ka)   \,dx +\int_0^{\la R^2}\!\! \int |\nb u|^2 \phi(x-\ka)  \,dx\,dt,
\end{align*}
and
\begin{align*}
E_{R,q,\la}&:= \bigg\|   \esssup_{0\leq t\leq \la R^2}\int |u(t)|^2 \phi(x-Rk)   \,dx 
+\int_0^{\la R^2}\!\int |\nabla u|^2 \phi(x-Rk)  \,dx\,ds \bigg\|_{\ell^{q/2}(k\in \ZZ^3)}^{q/2}.
\end{align*}
The idea of \cite[Proof of Lemma 3.1]{BT4} is to first  bound each $e_{R,\la}(\kappa)$ using the local energy inequality and then bound $E_{R,q,\la}$ by summing the bounds for $e_{R,\la}(\kappa)$. Following \cite[Proof of Lemma 3.1]{BT4} exactly, we obtain  
\EQS{\label{eq3.11}
e_{R,\la}(\ka)&\leq  \int |u_0|^2\phi(x-\ka)  \,dx+C \la  \sum_{\ka'\in R\ZZ^3; |\ka'-\ka|\leq 2R} e_{R,\la}(\ka')
\\&+ C    \frac {\la^{1/4}} {R^{1/2}}  \sum_{\ka'\in R\ZZ^3; |\ka'-\ka|\leq 10R} (e_{R,\la}(\ka'))^{3/2}
+\int_0^{\la R^2} \int 2 \pi_2 u\cdot \nb \phi( x-\ka) \,dx\,ds,
}
provided $\la \leq 1$. Recall $\pi_2$ in {$B_{2R}(\kappa)$} is bounded by $R^{-3} \overline K * e_{R,\la}(\kappa)$, and the convolution  $\overline K*e_{R,\la}$ is understood over $R\ZZ^3$, and, for $x \in R\ZZ^3$,
\[
\overline K(x) = \frac 1 {|x/R|^4}, \quad \text{if } |x|>4R; \quad 
\overline K(x) =0 \quad \text{otherwise}. 
\] 
{Using the fact that $\pi_2$ in $B_R(k)$ is bounded by $R^{-3} \overline K * e_{R,\la}(k)$,
the last integral in \eqref{eq3.11}  is bounded  as
\EQS{\label{6.2.22.a}
\int_0^{\la R^2} \int 2 \pi_2 u\cdot \nb \phi( x-\ka) \,dx\,ds &\lesssim 
\frac 1 {R^4} {\la R^{\frac 7 2}} \| u\|_{L^\I(0,\la R^2; L^2(B_{2R}(\ka)))}  |\overline K * e_{R,\la}(\ka)  |
\\&\lesssim 
{\frac {\la} {R^{1/2}}} \bigg( \| u\|_{L^\I(0,\la R^2; L^2(B_{2R}(\ka)))}^2+  |\overline K * e_{R,\la}(\ka)  |^2 \bigg)
\\&\lesssim {\frac {\la} {R^{1/2}}} \bigg( \sum_{\ka'\in R\ZZ^3; |\ka'-\ka|\leq 2R} e_{R,\la}(\ka') +  |\overline K * e_{R,\la}(\ka)  |^2 \bigg).
}} In the above  {estimates  \eqref{eq3.11} and \eqref{6.2.22.a}}, the constants do not depend on $q$. Note that the exponent of $|\overline K * e_{R,\la}(\ka)  |^2$ is 2, instead of 3/2 in \cite[(3.12)]{BT4}, so that when we take its $\ell^{q/2}$-norm for {$1\le q $} we can still apply Young's convolution inequality.

Next, raise both sides of the inequality \eqref{eq3.11} to the power $q/2$ and sum over $\ka\in R\ZZ^3$.  The left hand side becomes $E_{R,q,\la}$.   For the non-convolution terms on the right hand side of \eqref{eq3.11} where the last term is replaced by the bound in \eqref{6.2.22.a}, we have
\[
 \sum_{\ka\in R\ZZ^3}\bigg(   \int |u_0|^2\phi(x-\ka)  \,dx \bigg)^{q/2} \le C^q A_{0,q}(R)^{q/2},
\] 
\[ \sum_{\ka\in R\ZZ^3}\bigg(  C\big(\la +{\frac {\la} {R^{1/2}}} \big)  \sum_{\ka'\in R\ZZ^3; |\ka'-\ka|\leq 2R} e_{R,\la}(\ka')   \bigg)^{q/2} 
\leq C^q \bigg(\la+   {\frac {\la} {R^{1/2}}}\bigg)^{q/2}    E_{R,q,\la} ,
\]
and
\begin{align*}
&\sum_{\ka\in R\ZZ^3}
\bigg( C  \frac {\la^{1/4}} {R^{1/2}} \sum_{\ka\in R \ZZ^3; |\ka'-\ka|\leq 10R} (e_{R,\la}(\ka'))^{3/2}\bigg)^{q/2}  
\\
&\quad \leq C^q
\bigg( \frac {\la} {R^{2}}  \bigg)^{\frac q8} \sum_{\ka\in R\ZZ^3}
{\sum_{\ka\in R \ZZ^3; |\ka'-\ka|\leq 10R}  e_{R,\la}(\ka')^{\frac{3q}4}}
\leq C^q
\bigg(  \frac {\la} {R^{2}}  \bigg)^{\frac q8} \sum_{\ka\in R\ZZ^3} e_{R,\la}(\ka)^{\frac{3q}4}.
\end{align*}
{Above we have used
 $(\sum_{i=1}^n a_i )^p \le \sum_{i=1}^n a_i ^p$ for $a_i \ge 0$ since $0<q/2<1$, 
see \eqref{concave}.}
Also note,
\[
\sum_{\ka\in R\ZZ^3} e_{R,\la}(\ka)^{\frac{3q}4} = \norm{e_{R,\la}}_{\ell^{\frac{3q}4}}^{\frac{3q}4} 
\le  \norm{e_{R,\la}}_{\ell^{\frac{q}2}}^{\frac{3q}4} = E_{R,q,\la}^{\frac32}.
\]

For the  convolution term we use Young's convolution inequality to obtain 
\EQN{
&\sum_{\ka\in R\ZZ^3} \bigg({\frac {C\la} {R^{1/2}}}   |\overline K * e_{R,\la}(\ka)  |^2 \bigg)^{q/2}
= \bke{\frac {C\la} {R^{1/2}}}^{\frac q2} \sum_{\ka\in R\ZZ^3}  |\overline K * e_{R,\la}(\ka)  |^q
\\&\quad \le \bke{\frac {C\la} {R^{1/2}}}^{\frac q2}  	\|\overline K\|_{\ell^1}	 ^{q}	 	  	\|e_{R,\la}\|_{\ell^q}	 ^{q}
\le \bke{\frac {C\la} {R^{1/2}}}^{\frac q2} \|e_{R,\la}\|_{\ell^{q/2}} ^{q}				
\le  \bigg( \frac {C\la} {R^{1/2}} \bigg) ^{\frac q2}	{E_{R,q,\la}^2},
} 
where we used the fact that $ \| \overline K \|_{\ell^1(R\ZZ^3)}$ is bounded independently of $R$.

We conclude that, for some constant $C_2\ge1$ independent of $q,R$, we have
\EQS{\label{Eq.est}
E_{R,q,\la} &\leq C_2^q A_{0,q}(R)^{q/2} + C_2^{q} \bke{\la+ \frac {\la} {R^{1/2}} }^{q/2}  E_{R,q,\la}+  C_2^q  \bigg( \frac {\la} {R^{2}}  \bigg)^{\frac q8}  E_{R,q,\la}^{3/2}
\\
&\quad +C_2^q \bigg( \frac {\la} {R^{1/2}} \bigg) ^{\frac q2}{E_{R,q,\la}^2}
}

The right side is finite for $\la<R^{-2}T_2$ by assumption \eqref{th2.2-0}. The same argument in \cite[page 2005]{BT4} shows that $E_{R,q,\la}$ is continuous in $\la$. Then, from \eqref{Eq.est} and a continuity argument, we conclude,
\[
E_{R,q,\la} \leq 2E_0, \quad E_0 = C_2^q (A_{0,q}(R))^{q/2} ,
\]
provided
\[
C_2^q\bke{\la+ \frac {\la} {R^{1/2}} }^{q/2} +  C_2^q  \bigg( \frac {\la} {R^{2}}  \bigg)^{\frac q8}  (2E_0)^{1/2}
 +C_2^q \bigg( \frac {\la} {R^{1/2}} \bigg) ^{\frac q2}(2E_0) \le \frac 12.
\]
This is achieved if  
\[
\la \le  \la_R:= \min \bigg\{\la_0,  \la_0 R^2, \frac{\la_0 R^2}{A_{0,q}(R)^{2}}\bigg\},
\quad
\la_0 =  c C_2^{-12}
\]
{for some positive $c<1$.}
This shows the first estimate  \eqref{th2.2-1} of Lemma \ref{lem.A0qbound} with $C_1 = CC_2^2$. Note that the constants $C_2,\la_0$ and $C_1$ do not depend on $q$ and $R$.%

\medskip

We now show \eqref{th2.2-2}. 
Denote $N= \sup_{0\leq t\leq \la R^2}\int_{B_{R}} |u(t)|^2    \,dx +2\int_0^{\la R^2} \int_{B_{R}} |\nb u|^2 \,dx\,dt$ with $\la=\la_R$. We have by the Gagliardo-Nirenberg inequality
\EQN{\label{u103.est}
\int_0^{\la R^2} \int_{B_{R}} |u|^{\frac {10}3} dx \,dt 
&\lesssim N^{2/3} \int_0^{\la R^2}\bke{\int_{B_{R}} |\nb u|^2} dt  + R^{-2}  N^{5/3} \la R^2
\\
&\lesssim N^{5/3}  + \la N^{5/3}  \lesssim    N^{5/3},
}
using $\la \le 1$.
For $k \in R\ZZ^3$ and $Q(k) = B_R(k) \times (0,\la_R R^2)$, by the preceding estimate with $B_R$ replaced by $B_R(k)$,
we have ${N(B_R(k))} \le e_{R,\la}(k)$ and hence, for $\sigma>0$ satisfying $\frac{5\sigma} 3\ge \frac q 2$ (which is implied if $\sigma\geq {3/5}$), we have  
\[
\sum_{k \in R\ZZ^3} \bke{\int_{Q(k)} |u|^{\frac {10}3} dx\,dt}^{\sigma} 
\le C \sum_{k \in R\ZZ^3}
e_{R,\la}(k)^{\frac {5\sigma}3}  \le C \bigg(\sum_{k \in R\ZZ^3}
e_{R,\la}(k)^{\frac q2}\bigg)^{\frac 2 q \cdot \frac {5\sigma}3} \le C   E_0^{\frac 2 q \cdot\frac {5\sigma}3}. 
\]
By Calderon-Zygmund estimates,
\[
\sum_{k \in R\ZZ^3} \bke{\int_{Q(k)} |\pi_1|^{\frac 53} dx\,dt}^{\sigma} 
\le C\sum_{k \in R\ZZ^3}\sum_{k' \in R\ZZ^3; |k-k'|<10R}\bke{\int_{Q(k')} |u|^{\frac {10}3} dx\,dt}^{\sigma} 
\le C  E_0^{\frac 2 q \cdot\frac {5\sigma}3}.
\]

We now address the second component of the pressure. Recall $\pi_2$ in $B_R(k)$ is bounded by $R^{-3} \overline K * e_{R,\la}(k)$. 
Hence,
\[
\int_{Q(k)} |\pi_2|^{\frac53} dx\,dt \le C\la  ( \overline K * e_{R,\la}(k))^{\frac 53}.
\]
Thus,  if
$\frac {5\sigma}3 \geq 1$,  which is implied by our assumptions, then
we have
\EQN{
\sum_{k \in R\ZZ^3} \bke{\int_{Q(k)} |\pi_2|^{\frac 53} dx\,dt}^{\sigma} 
&\le  C\la^{\sigma}  \sum_{k \in R\ZZ^3}( \overline K * e_{R,\la}(k))^{\frac {5\sigma} 3 }
\\
&\le   C\la^{\sigma}  \norm{ \overline K }_{ \ell^1}^{\frac  {5\sigma}3} \sum_{k \in R\ZZ^3}( e_{R,\la}(k))^{\frac {5\sigma} 3}
\le   C   E_0^{\frac 2 q \cdot\frac {5\sigma}3}.
}
We conclude that
\[
\bigg\|  \int_{Q(k)} |u|^{\frac {10}3} +|\pi_1+\pi_2|^{\frac53}\,dx\,dt \bigg\|_{\ell^{\sigma}(k \in R\ZZ^3)}
\le C  E_0^{\frac 2 q \cdot\frac {5}{3}}  
=  C  A_{0,q}(R)^{\frac {5}3}.
\]
This shows \eqref{th2.2-2}.
\end{proof}

{By \eqref{eq-3.17-BT4}, we have $A_{0,q}(R) \lec \norm{u_0}_{E^2_q}^2$.
By Lemma \ref{lem.A0qbound},} we immediately have the following theorem of explicit growth rate that corresponds to \cite[Theorem 1.4]{BT4}.
\begin{theorem}[Explicit growth rate in $E^2_q$]\label{thm-1.4BT4}
Assume $u_0\in E^2_q$ where $1\le q<2$, is divergence free and $u\in\mathcal N(u_0)$ satisfies, for some $T_2>0$,
\[
\norm{u}_{{\bf LE}_q(0,T_1)}<\infty,\quad \forall T_1\in(0,T_2).
\]
Then, for any $R\ge 1$, with $T=\min\bke{\la_1(1+\norm{u_0}_{E^2_q})^{-4}R^2, T_2}$, we have
\EQ{\label{eq-thm1.4BT4}
\bigg\|\esssup_{0\leq t \leq T} \int_{B_R(Rk) } |u|^2  \,dx+ \int_0^T\int_{B_R(Rk) } |\nabla u|^2\,dx\,dt \bigg\| _{\ell^{q/2}(k\in \ZZ^3)}
\leq  C \norm{u_0}_{E^2_q}^2,
}
for positive constants $\la_1$ and $C$ independent of $u_0$ and $R$.
In particular, if $T_2=\infty$ then $T\to\infty$ as $R\to\infty$.
\end{theorem}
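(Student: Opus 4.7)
The plan is to apply Lemma \ref{lem.A0qbound} and convert the scale-dependent quantities $A_{0,q}(R)$ and $\la_R$ into quantities controlled by $\norm{u_0}_{E^2_q}$.

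First, I would extract from the proof of Lemma \ref{bt4-lem-2.2} the quantitative bound \eqref{eq-3.17-BT4}, which says that for every $R\ge 1$ we have
\[
A_{0,q}(R) \;=\; R \,N^0_{q,R}(u_0) \;\le\; C \norm{u_0}_{E^2_q}^{2}.
\]
This is the key input, since it lets us replace the $R$-dependent $A_{0,q}(R)$ in the definition of $\la_R$ by a purely data-dependent quantity.

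Next I would unfold the definition
\[
\la_R R^2 \;=\; \min\bke{\la_0 R^2,\ \la_0 R^4,\ \frac{\la_0 R^4}{A_{0,q}(R)^2}}.
\]
For $R\ge 1$ the middle term dominates the first one, so combining with the previous bound,
\[
\la_R R^2 \;\ge\; \la_0\,\min\bke{R^2,\ \tfrac{R^4}{C^2\norm{u_0}_{E^2_q}^4}}
\;\ge\; \la_0\, R^2 \min\bke{1,\ \tfrac{1}{C^2\norm{u_0}_{E^2_q}^4}}
\;\ge\; \la_1\,(1+\norm{u_0}_{E^2_q})^{-4}\,R^2
\]
for a suitable constant $\la_1>0$ independent of $u_0$ and $R$, using $\max(1,C^2\norm{u_0}_{E^2_q}^4)\le C'(1+\norm{u_0}_{E^2_q})^4$. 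Therefore, for $T=\min\{\la_1(1+\norm{u_0}_{E^2_q})^{-4}R^2,\,T_2\}$, we have both $T\le\la_R R^2$ and $T<T_2$, so the a priori hypothesis \eqref{th2.2-0} is in force on $(0,T)$ (since $T<T_2$ guarantees the local energy norm is finite on $(0,T_1)$ for $T_1$ slightly past $T$).

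Finally I would apply Lemma \ref{lem.A0qbound}: the left side of \eqref{eq-thm1.4BT4} with time window $(0,T)$ is monotone in the window, hence bounded by the left side of \eqref{th2.2-1} at the window $(0,\la_R R^2)$, which is at most $C_1 A_{0,q}(R) \le C\norm{u_0}_{E^2_q}^2$. The last assertion, that $T\to\infty$ as $R\to\infty$ when $T_2=\infty$, is then immediate from the explicit form of $T$. The main (minor) obstacle is the bookkeeping at the boundary $T=T_2$, where I must make sure the Lemma applies with $\la R^2$ strictly less than $T_2$; this is resolved by applying \eqref{th2.2-1} on any $T_1<T$ and letting $T_1\nearrow T$, using lower semicontinuity of the norms on the left-hand side.
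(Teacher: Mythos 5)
Your proposal is correct and follows exactly the route the paper takes: the paper derives this theorem "immediately" from the bound $A_{0,q}(R)\lec \norm{u_0}_{E^2_q}^2$ in \eqref{eq-3.17-BT4} together with Lemma \ref{lem.A0qbound}, and your unwinding of $\la_R R^2\ge \la_1(1+\norm{u_0}_{E^2_q})^{-4}R^2$ for $R\ge1$, plus the limiting argument at the boundary $T=T_2$, supplies precisely the bookkeeping the paper leaves implicit.
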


\subsubsection{Global existence}

{To prove global existence we} follow the approach of \cite[Theorem 1.5]{BT8}, {(modified from \cite[\S3]{KwTs}), via the localized and regularized Navier-Stokes equations}
\EQS{\label{eq-regularizedNS}
&\pd_t u^\epsilon - \De u^\epsilon + (\mathcal J_\epsilon(u^\epsilon)\cdot\nb)(u^\epsilon \Phi_\epsilon) + \nb \pi^\epsilon = 0,\\
&\div u^\epsilon = 0,
}
where $\mathcal J_\epsilon(f) = \eta_\epsilon * f$ for a spatial mollifier {$\eta_\epsilon(x) = \epsilon^{-3} \eta(x/\epsilon)$} and $\Phi_\epsilon(x) = \Phi(\epsilon x)$ for a fixed radially decreasing cutoff function $\Phi$ satisfying $\Phi=1$ on $B_1(0)$ and $\supp(\Phi)\subset B_{3/2}(0)$.

Next we construct a mild solution in ${{\bf LE}_q(0,T)}$, {defined in \eqref{ETq.def},} of the regularized Navier-Stokes equations \eqref{eq-regularizedNS}.
The following lemma corresponds to \cite[Lemma 3.3]{KwTs}.

\begin{lemma}\label{KwTs-lem3.3}
Let $q\ge1$. For each $0<\epsilon<1$ and $u_0$ with $\div u_0=0$ and $\norm{u_0}_{E^2_q}\le B$, if $0<T<\min(1,c\epsilon^3B^{-2})$, we can find a unique solution $u=u^\epsilon$ to the integral form of \eqref{eq-regularizedNS}
\EQ{\label{eq3.5-KwTs}
u(t) = e^{t\De} u_0 - \int_0^t e^{(t-\tau)\De} \mathbb P\nb\cdot (\mathcal J_\epsilon(u)\otimes u\Phi_\epsilon)(\tau)\, d\tau
}
satisfying
\[
\norm{u}_{{\bf LE}_q(0,T)} \le 2C_0 B,
\]
where $c>0$ and $C_0>1$ are absolute constants.
\end{lemma}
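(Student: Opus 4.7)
The plan is to apply the Picard contraction principle to the integral formulation \eqref{eq3.5-KwTs} in the Banach space $X_T := {\bf LE}_q(0,T)$ equipped with the norm \eqref{ETq.def}. Introduce the bilinear map
\[
B_\epsilon(f,g)(t) := \int_0^t e^{(t-\tau)\De}\mathbb P\nb\cdot\bigl(\mathcal J_\epsilon(f)\otimes g\,\Phi_\epsilon\bigr)(\tau)\,d\tau,
\]
so that \eqref{eq3.5-KwTs} becomes $u = e^{t\De}u_0 - B_\epsilon(u,u)$. The linear estimates from Lemma \ref{KwTs-lem2.4} (fixing any $\beta>0$) give, under the standing hypothesis $T<1$, an absolute constant $C_0$ with
\[
\|e^{t\De}u_0\|_{X_T} \le C_0\|u_0\|_{E^2_q} \le C_0 B, \qquad \|B_\epsilon(f,g)\|_{X_T} \le C_0 \|\mathcal J_\epsilon(f)\otimes g\,\Phi_\epsilon\|_{E^{2,2}_{T,q}}.
\]

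The crux is the bilinear control of $F(t):=(\mathcal J_\epsilon(f)\otimes g\,\Phi_\epsilon)(t)$, where the mollifier provides the key gain. By Young's convolution inequality,
\[
\|\mathcal J_\epsilon(f)(\cdot,t)\|_{L^\infty(\R^3)} \le \|\eta_\epsilon\|_{L^2}\sup_{x\in\R^3}\|f(\cdot,t)\|_{L^2(B_\epsilon(x))} \lec \epsilon^{-3/2}\|f(t)\|_{L^2_\uloc} \le \epsilon^{-3/2}\|f(t)\|_{E^2_q},
\]
using the embedding $E^2_q \subset E^2_\infty = L^2_\uloc$ from \eqref{eq1.4}. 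Combined with $|\Phi_\epsilon|\le 1$, this yields
\[
\|F(\cdot,t)\|_{L^2(B_1(k))} \lec \epsilon^{-3/2}\|f(t)\|_{E^2_q}\,\|g(\cdot,t)\|_{L^2(B_1(k))}.
\]
Taking $L^2_T$ inside and then $\ell^q_k$ outside, and invoking the elementary bound $\|g\|_{E^{2,2}_{T,q}} \le T^{1/2}\|g\|_{E^{\infty,2}_{T,q}} \le T^{1/2}\|g\|_{X_T}$, one obtains
\[
\|F\|_{E^{2,2}_{T,q}} \lec \epsilon^{-3/2}\,T^{1/2}\,\|f\|_{X_T}\|g\|_{X_T}.
\]

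Putting these together gives $\|B_\epsilon(f,g)\|_{X_T} \le C_*\,\epsilon^{-3/2}T^{1/2}\|f\|_{X_T}\|g\|_{X_T}$ for an absolute constant $C_*$. Setting $\Lambda := \|e^{t\De}u_0\|_{X_T} \le C_0 B$, the Picard contraction principle produces a unique solution $u^\epsilon \in \overline{B(0,2\Lambda)}\subset X_T$ as soon as $4C_*\epsilon^{-3/2}T^{1/2}\Lambda \le 1$, i.e., $T \le c\,\epsilon^3/B^2$ for a sufficiently small absolute constant $c>0$, which together with $T<1$ delivers the stated bound $\|u^\epsilon\|_{X_T} \le 2C_0 B$. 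The only mildly delicate point is that for $q<2$ the norms $L^2_T E^2_q$ and $E^{2,2}_{T,q}$ are not interchangeable in the favorable direction via Minkowski (cf.\ \eqref{ineq:embedding.parabolic.space}--\eqref{ineq:embedding.parabolic.space2}); keeping the $\ell^q_k$ summation on the outside throughout the bilinear step sidesteps this and is what makes the estimate work uniformly in $1\le q<2$.
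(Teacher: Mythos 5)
Your proposal is correct and follows essentially the same route as the paper: a Picard fixed-point argument in ${\bf LE}_q(0,T)$, using Lemma \ref{KwTs-lem2.4} (with $T\le 1$) for the linear and Duhamel bounds and the mollifier estimate $\|\mathcal J_\epsilon(f)\|_{L^\infty}\lec \epsilon^{-3/2}\|f\|_{L^2_\uloc}$ together with $\|g\|_{E^{2,2}_{T,q}}\le T^{1/2}\|g\|_{E^{\infty,2}_{T,q}}$ to extract the factor $\epsilon^{-3/2}T^{1/2}$ in the bilinear estimate. The paper's proof is the same computation, phrased via the map $\Psi(u)$ rather than an explicit bilinear form.
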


\begin{proof}
Let $\Psi(u)$ be the map defined by the right side of \eqref{eq3.5-KwTs} for $u\in{{\bf LE}_q(0,T)}$.
By Lemma \ref{KwTs-lem2.4} and $T\le 1$,%
\EQN{
\norm{\Psi(u)}_{{\bf LE}_q(0,T)} 
&\lec \norm{u_0}_{E^2_q} + \norm{\mathcal J_\epsilon(u)\otimes u\Phi_\epsilon}_{E^{2,2}_{T,q}}\\
&\lec \norm{u_0}_{E^2_q} + \norm{\mathcal J_\epsilon(u)}_{L^\infty(0,T;L^\infty(\R^3))} \norm{u}_{E^{2,2}_{T,q}}\\
&\lec \norm{u_0}_{E^2_q} + \epsilon^{-\frac32} \sqrt T \norm{u}_{E^{\infty,2}_{T,q}}^2.
}
Thus
\[
\norm{\Psi(u)}_{{\bf LE}_q(0,T)} \le C_0 \norm{u_0}_{E^2_q} + C_1\epsilon^{-\frac32} \sqrt T \norm{u}_{{\bf LE}_q(0,T)}^2,
\]
for some constants $C_0, C_1>0$.
Similarly, for $u,v\in {{\bf LE}_q(0,T)}$,
\[
\norm{\Psi(u) - \Psi(v)}_{{\bf LE}_q(0,T)}
\le C_1\epsilon^{-\frac32} \sqrt T \bke{\norm{u}_{{\bf LE}_q(0,T)} + \norm{v}_{{\bf LE}_q(0,T)}} \norm{u - v}_{{\bf LE}_q(0,T)}.
\]
By the Picard contraction theorem, if $T$ satisfies
\[
T<\frac{\epsilon^3}{64(C_0C_1B)^2} = c\epsilon^3B^{-2},
\]
then we can find a unique fixed point $u\in{{\bf LE}_q(0,T)}$ of $u=\Psi(u)$, i.e., \eqref{eq3.5-KwTs}, satisfying 
$\norm{u}_{{\bf LE}_q(0,T)} \le 2C_0B$.
\end{proof}

The following lemma corresponds to \cite[Lemma 3.4]{KwTs}.

\begin{lemma}\label{KwTs-lem3.4}
Let $u_0\in E^2_q$, $q\ge1$, with $\div u_0=0$. For each $\epsilon\in(0,1)$, we can find $u^\epsilon$ in ${{\bf LE}_q(0,T)}$ and $\pi^\epsilon$ in $L^\infty(0,T;L^2(\R^3))$ for some positive $T=T(\epsilon,\norm{u_0}_{E^2_q})$ which solve the regularized Navier-Stokes equations \eqref{eq-regularizedNS} in the sense of distributions, and {$u^\epsilon(t) \to u_0$ in $L^2(E)$ as $t\to 0^+$} 
for any compact subset $E$ of $\R^3$.
\end{lemma}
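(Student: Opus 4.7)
The plan is to upgrade the mild solution from Lemma \ref{KwTs-lem3.3} into a distributional solution of \eqref{eq-regularizedNS} with an explicit pressure, and then verify local convergence at the initial time. Setting $B:=\norm{u_0}_{E^2_q}$ and applying Lemma \ref{KwTs-lem3.3}, I obtain $u^\epsilon\in{\bf LE}_q(0,T)$ with $T\in(0,c\epsilon^3B^{-2})$ and $\norm{u^\epsilon}_{{\bf LE}_q(0,T)}\le 2C_0B$. Since $\mathcal J_\epsilon$ commutes with $\div$, we have $\div \mathcal J_\epsilon(u^\epsilon)=0$, so $(\mathcal J_\epsilon(u^\epsilon)\cdot\nabla)(u^\epsilon\Phi_\epsilon)=\div(\mathcal J_\epsilon(u^\epsilon)\otimes u^\epsilon\Phi_\epsilon)$, and taking $\div$ of \eqref{eq-regularizedNS} motivates defining
\[
\pi^\epsilon:=-\Delta^{-1}\partial_i\partial_j\bigl(\mathcal J_\epsilon(u^\epsilon)_i\,(u^\epsilon\Phi_\epsilon)_j\bigr).
\]

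Because $\Phi_\epsilon$ is supported in $B_{3/(2\epsilon)}(0)$, the product $u^\epsilon\Phi_\epsilon$ is compactly supported with $\norm{u^\epsilon\Phi_\epsilon}_{L^2(\R^3)}\lec \epsilon^{-3/2}\norm{u^\epsilon}_{E^2_q}$, and the mollification satisfies $\norm{\mathcal J_\epsilon(u^\epsilon)}_{L^\infty(\R^3)}\lec \epsilon^{-3/2}\norm{u^\epsilon}_{E^2_q}$; hence $\mathcal J_\epsilon(u^\epsilon)\otimes u^\epsilon\Phi_\epsilon \in L^\infty(0,T;L^2(\R^3))$. Since $-\Delta^{-1}\partial_i\partial_j$ is $L^2$-bounded (a double Riesz transform), this yields $\pi^\epsilon\in L^\infty(0,T;L^2(\R^3))$. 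With the nonlinear source globally square-integrable, the passage from the integral identity \eqref{eq3.5-KwTs} to the distributional PDE is routine: pair \eqref{eq3.5-KwTs} against $\phi\in C^\infty_c(\R^3\times(0,T))$, use the identity $\mathbb P\,\div F=\div F-\nabla\Delta^{-1}\partial_i\partial_j F_{ij}$ with $F=\mathcal J_\epsilon(u^\epsilon)\otimes u^\epsilon\Phi_\epsilon$, and move derivatives off the semigroup factor in the Duhamel term to recover \eqref{eq-regularizedNS}.

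For the initial convergence, decompose
\[
u^\epsilon(t)-u_0=(e^{t\Delta}u_0-u_0)-B_\epsilon(u^\epsilon,u^\epsilon)(t),\quad B_\epsilon(f,g)(t):=\int_0^t e^{(t-\tau)\Delta}\mathbb P\nabla\cdot(\mathcal J_\epsilon(f)\otimes g\Phi_\epsilon)\,d\tau,
\]
and fix a compact $E\subset\R^3$. Repeating the bilinear estimate inside the proof of Lemma \ref{KwTs-lem3.3} over $(0,t)\subset(0,T)$ gives $\norm{B_\epsilon(u^\epsilon,u^\epsilon)(t)}_{E^2_q}\lec \epsilon^{-3/2}\sqrt{t}\,\norm{u^\epsilon}_{{\bf LE}_q(0,T)}^{2}\to 0$, which controls $\norm{\cdot}_{L^2(E)}$ up to the constant counting unit balls meeting $E$ (noting $\norm{\cdot}_{L^2(E)}\lec_E\norm{\cdot}_{E^2_\infty}\le\norm{\cdot}_{E^2_q}$ for $q\ge1$). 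For the heat term, choose $R$ with $E\subset B_R(0)$ and split $u_0=u_0\chi_{B_{2R}}+u_0(1-\chi_{B_{2R}})$: the first piece lies in $L^2(\R^3)$, so $e^{t\Delta}(u_0\chi_{B_{2R}})\to u_0\chi_{B_{2R}}$ in $L^2(\R^3)\supset L^2(E)$, while for $x\in E$ the heat-kernel convolution of the second piece integrates only over $|y|\ge 2R$ where $|x-y|\ge R$, so Gaussian decay of the kernel combined with $u_0\in L^2_\uloc$ forces its $L^2(E)$-norm to vanish as $t\to 0^+$.

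The obstacles here are modular rather than deep: the cutoff $\Phi_\epsilon$ is tailored so that the nonlinear source is globally square-integrable, which simultaneously provides the pressure bound and unlocks the standard mild-to-distributional argument; everything else is bookkeeping of $\epsilon$-dependent constants and of the splitting that allows passage from $L^2$-global estimates to the requested local $L^2(E)$ convergence.
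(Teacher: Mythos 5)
Your argument is correct and follows essentially the same route as the paper, which simply defers to \cite[Proof of Lemma 3.4]{KwTs} after substituting Lemma \ref{KwTs-lem3.3} and adjusting the single key estimate $\norm{u^\epsilon - e^{t\De}u_0}_{E^{\infty,2}_{t',q}} \lec \epsilon^{-3/2}\sqrt{t'}\,\norm{u^\epsilon}_{E^{\infty,2}_{t',q}}^2$ — precisely the bilinear bound you derive for the convergence to initial data. Your explicit construction of $\pi^\epsilon$ via $-\Delta^{-1}\partial_i\partial_j$ and the compact support of $u^\epsilon\Phi_\epsilon$, and your splitting of $u_0$ for the local $L^2$ convergence of the heat term, are exactly the details the paper leaves to the cited reference.
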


\begin{proof}
The proof is nearly identical to \cite[Proof of Lemma 3.4]{KwTs} except 
that Lemma \ref{KwTs-lem3.3} ($E^2_q$-version of \cite[Lemma 3.3]{KwTs}) is applied
and that the first displayed estimate {in \cite[Proof of Lemma 3.4]{KwTs}} is adjusted as 
\EQN{
\norm{u^\epsilon - e^{t\De}u_0}_{E^{\infty,2}_{t',q} } 
&= \norm{\int_0^t e^{(t-\tau)\De}\mathbb P\nb\cdot(\mathcal J_\epsilon(u)\otimes u\Phi_\epsilon)(\tau)\, d\tau}_{E^{\infty,2}_{t',q} } \\
&\lec \norm{\mathcal J_\epsilon(u)\otimes u\Phi_\epsilon}_{E^{{2,2}}_{t',q} }
\lec \epsilon^{-\frac32} \sqrt {t'}\norm{u}_{E^{\infty,2}_{t',q}}^2,
}
{where we have used Lemma \ref{KwTs-lem2.4} and are assuming $t'\le T\le 1$.}
\end{proof}

We next show global existence for the regularized system \eqref{eq-regularizedNS}.
The following lemma corresponds to \cite[Lemma 3.3]{BT8}. Thanks to Lemma \ref{bt4-lem-2.2}, {the decay condition \cite[(1.12)]{BT8} on initial data is implied by $u_0\in E^2_q$,} $q<2$. 

\begin{lemma}\label{BT8-lem3.3}
Assume $u_0\in E^2_q$, $1\le q<2$, is divergence free, and fix $\epsilon\in(0,1)$. Then, there exists a global solution $(u^\epsilon,\pi^\epsilon)$ to the regularized Navier-Stokes equations \eqref{eq-regularizedNS} 
such that $u^\epsilon \in {\bf LE}_q(0,T)$ for any $T<\infty$, and $(u^\epsilon,\pi^\epsilon)$ satisfies the a priori bounds \eqref{th2.2-1}-\eqref{th2.2-2} in Lemma \ref{lem.A0qbound} for all $R=n\in \NN$ up to time $T_n$ defined in \eqref{Tn.def}. 
\end{lemma}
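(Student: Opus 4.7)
The plan is to follow the structure of \cite[Lemma 3.3]{BT8}, with the previous lemmas in this subsection supplying the replacements appropriate to $E^2_q$ with $1\le q<2$. First, for fixed $\epsilon\in(0,1)$, Lemma \ref{KwTs-lem3.4} supplies a solution $(u^\epsilon,\pi^\epsilon)$ of \eqref{eq-regularizedNS} on some time interval $[0,T_0)$ with $T_0=T_0(\epsilon,\|u_0\|_{E^2_q})$, with $u^\epsilon\in {\bf LE}_q(0,T_0)$. Because $\mathcal J_\epsilon(u^\epsilon)\in L^\infty_tC^\infty_x$ and $\Phi_\epsilon$ is smooth and compactly supported, parabolic regularity gives $u^\epsilon\in C^\infty(\R^3\times(0,T_0))$. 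I will extend this local solution using the a priori estimates from Lemma \ref{lem.A0qbound}.

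Second, I verify that the a priori bounds of Lemma \ref{lem.A0qbound} apply to $u^\epsilon$. The key point is that the regularization preserves the energy structure: since $\div \mathcal J_\epsilon(u^\epsilon)=0$, testing \eqref{eq-regularizedNS} against $u^\epsilon\phi$ for $\phi\ge 0$ yields
\[
\int (\mathcal J_\epsilon(u^\epsilon)\cdot\nabla)(u^\epsilon\Phi_\epsilon)\cdot u^\epsilon\phi\,dx = -\tfrac12\int|u^\epsilon|^2\Phi_\epsilon\,\mathcal J_\epsilon(u^\epsilon)\cdot\nabla\phi\,dx + O(\epsilon),
\]
the $O(\epsilon)$ term coming from $\nabla\Phi_\epsilon$, whose support is far from the origin and whose size is of order $\epsilon$. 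This reproduces the local energy inequality used in the proof of Lemma \ref{lem.A0qbound}, so tracing that proof with $u$ replaced by $u^\epsilon$ yields exactly the bounds \eqref{th2.2-1}--\eqref{th2.2-2} on any interval $[0,T]\subset[0,T_n]$ where the solution is known to exist in ${\bf LE}_q(0,T)$. (The $O(\epsilon)$ contributions are absorbed since the pressure decomposition, Cald\'eron--Zygmund, and Young's convolution inequality arguments in the proof of Lemma \ref{lem.A0qbound} are insensitive to such small perturbations.)

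Third, I iterate local existence to cover $[0,T_n]$ for every $n\in\NN$, hence to cover $[0,\infty)$ since $T_n\to\infty$. Applying \eqref{th2.2-1} at scale $R=1$ and using concavity as in \eqref{eq-3.17-BT4} of Lemma \ref{bt4-lem-2.2}, we obtain
\[
\sup_{0\le t\le T}\|u^\epsilon(t)\|_{E^2_q}^2 \le C\, A_{0,q}(1) \le C\|u_0\|_{E^2_q}^2,
\]
uniformly for $T\le T_n$, with a constant $C$ independent of $\epsilon$. This provides a \emph{uniform} lower bound $c\epsilon^3\|u_0\|_{E^2_q}^{-2}$ on the local existence time furnished by Lemma \ref{KwTs-lem3.3}. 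So if $T^\star$ denotes the maximal existence time in ${\bf LE}_q$, either $T^\star\ge T_n$, or else we can extend beyond $T^\star$ by restarting at some $t_0<T^\star$ with $T^\star-t_0$ smaller than this local time, contradicting maximality. Letting $n\to\infty$ yields global existence together with the claimed a priori bounds on $[0,T_n]$ for every $n$.

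The main obstacle is step two: carefully checking that the mollifier $\mathcal J_\epsilon$ and the large-scale cutoff $\Phi_\epsilon$ do not generate terms that spoil the iterative pressure/energy bookkeeping in the proof of Lemma \ref{lem.A0qbound}. The crucial inputs---the divergence-free condition for $\mathcal J_\epsilon(u^\epsilon)$, finite speed of propagation of the cutoff errors, and uniform-in-$\epsilon$ bounds on $\Phi_\epsilon,\nabla\Phi_\epsilon$ where they matter---are all available, so this is technical rather than conceptual. The remaining steps (continuation and verification that $T_n\to\infty$ using $A_{0,q}(n)\lec\|u_0\|_{E^2_q}^2$) are then routine.
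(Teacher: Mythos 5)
Your overall strategy (local existence from Lemma \ref{KwTs-lem3.4}, the a priori bounds of Lemma \ref{lem.A0qbound} applied to the regularized system, then continuation until $T_n$ and letting $n\to\infty$) is the same as the paper's. There are two points to flag.

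First, a genuine error in your third step: you claim that applying \eqref{th2.2-1} \emph{at scale $R=1$} gives $\sup_{0\le t\le T}\|u^\epsilon(t)\|_{E^2_q}^2\le C\|u_0\|_{E^2_q}^2$ ``uniformly for $T\le T_n$'' with $C$ independent of everything. The scale-$1$ estimate in Lemma \ref{lem.A0qbound} is only valid up to time $\la_1\cdot 1^2=T_1$, which is small; it says nothing about times of order $T_n\sim \la_0 n^2$ for large $n$. To control $[0,T_n]$ you must invoke \eqref{th2.2-1} at scale $R=n$, and converting the resulting scale-$n$ local energy bound back to the unit-scale $E^2_q$ norm costs a covering factor, so one only gets $\|u^\epsilon(t)\|_{E^2_q}\le C(n)\|u_0\|_{E^2_q}$ with an $n$-dependent constant (this is exactly what the paper writes). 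Your continuation argument survives, because for each fixed $n$ the local existence time from Lemma \ref{KwTs-lem3.3} is then bounded below by a positive quantity depending on $(\epsilon,n,\|u_0\|_{E^2_q})$, and finitely many steps reach $T_n$; but the uniform-in-$n$ bound you assert is unjustified (and is not expected to hold --- compare the growth rate in Theorem \ref{thm-1.4BT4}).

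Second, a structural difference worth noting: instead of a maximal-time contradiction, the paper observes that \eqref{eq-regularizedNS} is a Stokes system with a localized, regularized source and therefore has a \emph{global} unique solution outright; uniqueness identifies it with the ${\bf LE}_q$ solution of Lemma \ref{KwTs-lem3.4}, and the iteration is only needed to upgrade its membership to ${\bf LE}_q(0,T_n)$. This avoids having to re-justify gluing at the maximal time. Your lengthy second step (rederiving the local energy inequality with $O(\epsilon)$ errors from $\nb\Phi_\epsilon$) is compressed in the paper to ``by the same proof of Lemma \ref{lem.A0qbound}''; your discussion is reasonable in spirit, though the error term is not literally $O(\epsilon)$ uniformly --- it is $\nb\Phi_\epsilon$ times cubic terms in $u^\epsilon$ and must itself be absorbed using the local energy quantities.
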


\begin{proof}
We will take radius $R=n\in \NN$. 
By \eqref{eq-3.17-BT4} of Lemma \ref{bt4-lem-2.2}, $A_{0,q}(n) \le c_3 \norm{u_0}_{E^2_q}^2$ for all $n$. For $n\in\NN$, let 
\EQ{\label{Tn.def}
T_n = \la_0 n^2 \min\bket{1, n^2(c_3  \norm{u_0}_{E^2_q}^2)^{-2} } \le \la_n n^2,
}
where $\la_0$ and $\la_n$ are defined in Lemma \ref{lem.A0qbound}.
The sequence $T_n$ is increasing and $T_n\to\infty$. By the same proof of Lemma \ref{lem.A0qbound}, if a solution $(u^\epsilon,\pi^\epsilon)$ of  \eqref{eq-regularizedNS} satisfies $u^\epsilon \in {\bf LE}_q(0,T)$, then it satisfies the a priori bounds \eqref{th2.2-1} and \eqref{th2.2-2} for $R=n$ up to time $\min(T,T_n)$.

As the system \eqref{eq-regularizedNS} can be considered as a Stokes system with localized and regularized source, it has a global unique solution $(u^\epsilon,\pi^\epsilon)$. Its uniqueness makes it agree with the ${\bf LE}_q$-solution of Lemma \ref{KwTs-lem3.4}, hence $u^\epsilon \in {\bf LE}_q(0,\tau)$ for some $\tau=\tau(\epsilon,\norm{u_0}_{E^2_q})>0$.
Fix $n \in \NN$. By \eqref{th2.2-1} with $R=n$, $\norm{u^\epsilon(\tau)}_{E^2_q} \le  C(n) \norm{u_0}_{E^2_q}$. 
By Lemma \ref{KwTs-lem3.4}, there is an ${\bf LE}_q$-solution on $(\tau, \tau+\tau_1)$ for some $\tau_1 = \tau_1(\epsilon,C(n)\norm{u_0}_{E^2_q})>0$. By uniqueness, it agrees with $u^\epsilon$ and we have $u^\epsilon \in {\bf LE}_q(0,\tau+\tau_1)$ with the a priori bound \eqref{th2.2-1} up to $
\tau+\tau_1$. We can repeat this extension to show $u^\epsilon \in {\bf LE}_q(0,\tau+k\tau_1)$, $k \in \NN$, until $\tau+k\tau_1\ge T_n$. We conclude, for each $n \in \NN$, $u^\epsilon \in {\bf LE}_q(0,T_n)$ and satisfies the a priori bound \eqref{th2.2-1} for $R=n$ up to time $T_n$.

As $T_n\to \infty$, the lemma is proved.
\end{proof}

With the above lemmas, we are ready to consider the Navier-Stokes equations with $E^2_q$ data, $1\le q<2$, and construct a global-in-time local energy solution in ${\bf LE}_q(0,T)$.

\begin{proof}[Proof of Theorem \ref{th4.8}]
For $k\in \NN$, let $u^k$ be the solution of the regularized system \eqref{eq-regularizedNS} 
with $\epsilon=1/k$, given in Lemma \ref{BT8-lem3.3}. They share the same a priori bound 
\eqref{th2.2-1} for $R=n$ up to time $T_n$, thus
\[
\sup_{k \in \NN} \|u^k\|_{{\bf LE}_q(0,T_n)}<\infty, \quad \forall n \in \NN.
\]
Using this \textit{a priori} bound, we can construct the desired global solutions as the limit of $u^k$ defined in $(0,T_k)$, $T_k\to\infty$, in the same manner as \cite[Proof of Theorem 1.5]{BT8}. 
The only difference is that all the supremums $\sup_{x_0\in\R^3}$ are replaced by $\norm{\cdot}_{\ell^q(x_0\in\ZZ^3)}$.
For example, \cite[(3.7)]{BT8} is replaced by the $\ell^q$-version, \eqref{eq-thm1.4BT4}.
To avoid redundancy, we omit further details.
\end{proof}

\subsection{Application to the stability of suitability for the perturbed Navier-Stokes equations}
\label{Sec4.2}
 
In \cite{BT4}, the first and third authors constructed time-global local energy solutions with initial data in the $L^2$-based Wiener amalgam classes $E^2_q$, $2\le q<\infty$, which is related to work in \cite{LR2,KiSe,KwTs,BT8,BKT1,FDLR2}.  As a technical step, a suitable weak solution was needed solving the perturbed Navier-Stokes equations  
\EQ{\label{PNS}
\partial_t u -\Delta u+u\cdot\nb u+v\cdot \nb u  + u\cdot\nb v +\nb \pi = 0;\qquad \nb \cdot u=0, {\ \ \mbox{in}\ \ \R^3\times (0,\infty),}
}
and $v\in L^\I L^p_{\uloc}$ where $p>3$ is itself a solution to \eqref{NS} {in $\R^3$}. 
 It is natural to ask  if an analogous statement holds when $v$ is a small $L^3_\uloc$ solution in the sense of Maekawa and Terasawa \cite{MaTe} and our Theorem \ref{thrm:critical}. The new spacetime estimates in  Theorem \ref{thrm:critical} allow us to confirm this, a claim which is explored presently.

Note that in the global existent proof for $E^2_q$ where $1\leq q<2$ the perturbed system is \textit{not} needed; so, this subsection is independent of the work done above.

The following is an exact copy of \cite[Lemma 4.4]{BT4} except the condition \[\esssup_{0<t\leq T_0}\| v(t) \|_{L^4_\uloc}<\I,\] has been replaced by  scaling invariant space-time integral   which is finite for the solutions constructed in Theorem \ref{thrm:critical}.

\begin{proposition}\label{corollary} Let $c_0$ and $\la_0$ be the constants in \cite[Lemma 4.4]{BT4} %
Assume $u_0\in E^2_q$ {$2\le q<\infty$}  is divergence free, and $v:\R^3\times [0,T_0]\to \R^3$ satisfies
$\div v=0$ and  
\[
\esssup_{0<t\leq T_0}\| v(t) \|_{L^3_\uloc}<\delta\le c_0%
;\quad \| v\|_{E^{5,5}_{T_0,\infty}} <\I.
\]
Let $T= \min (T_0,\la_0,\la_0A_{0,q}^{-2})$.    
Then, there exists a weak solution $u$ and pressure $\pi$ satisfying \eqref{PNS}
in the distributional sense on $\R^3\times [0,T]$.
Furthermore, $u$ and {$\pi$} are a local energy solution to the perturbed Navier-Stokes equations \eqref{PNS} satisfying
\EQN{\label{ineq.target3}
\norm{u}_{\operatorname{\mathbf{LE}}_q(0,T)} 
\leq C \|u_0\|_{E^2_q},
}
for a constant $C$.
\end{proposition}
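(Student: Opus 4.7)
The plan is to follow the construction in \cite[Lemma 4.4]{BT4}, replacing the use of $v\in L^\infty L^4_\uloc$ by the combination $v\in L^\infty L^3_\uloc$ (with small norm $\delta$) and $v\in E^{5,5}_{T_0,\infty}$. First, I would regularize: approximate $u_0$ by divergence-free fields in $C^\infty_c(\R^3)$ converging in $E^2_q$, and spatially mollify $v$ to $v^{(\epsilon)}$ so that both $\|v^{(\epsilon)}\|_{L^\infty L^3_\uloc}\le \delta$ and $\|v^{(\epsilon)}\|_{E^{5,5}_{T_0,\infty}}$ are preserved uniformly. Then I would solve the regularized perturbed Navier--Stokes system
\[
\pd_t u^{(\epsilon)} - \De u^{(\epsilon)} + \mathcal J_\epsilon(u^{(\epsilon)})\cdot \nb u^{(\epsilon)} + v^{(\epsilon)}\cdot\nb u^{(\epsilon)} + u^{(\epsilon)}\cdot\nb v^{(\epsilon)} + \nb \pi^{(\epsilon)} = 0
\]
by a Picard iteration in $\mathbf{LE}_q(0,T)$ analogous to Lemma \ref{KwTs-lem3.3}, using the linear estimates of Lemma \ref{KwTs-lem2.4}. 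Global extension in $\epsilon$ is then driven by the a priori bound below.

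The core step is a uniform $\mathbf{LE}_q(0,T)$ bound. Following the strategy of Lemma \ref{lem.A0qbound}, I would test the equation with $u\,\phi_k$ for smooth cutoffs localized on $B_1(k)$, derive a local energy inequality, and sum its $q/2$-power over $k\in \ZZ^3$. Beyond the terms already treated in Lemma \ref{lem.A0qbound}, the perturbation introduces two new contributions. The drift term produces $\frac12\int_0^t\!\!\int |u|^2 v\cdot\nb\phi_k\,dx\,ds$, which is bounded by H\"older with $v\in L^\infty L^3_\uloc$ together with Gagliardo--Nirenberg
\[
\|u\|_{L^3(B_2(k))}^2\lec \|u\|_{L^2(B_2(k))}\bke{\|\nb u\|_{L^2(B_2(k))} + \|u\|_{L^2(B_2(k))}};
\]
this is absorbable into the local energy thanks to the smallness $\delta\le c_0$. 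After integration by parts (using $\div v=0$), the stretching term $\int_0^t\!\!\int (u\cdot\nb v)\cdot u\,\phi_k\,dx\,ds$ reduces to a boundary term of the same type plus $\int_0^t\!\!\int v\cdot(u\cdot\nb u)\,\phi_k\,dx\,ds$, which I would estimate by
\[
\Bigl(\int_0^t\!\!\int |v|^5 \phi_k\Bigr)^{\!1/5} \Bigl(\int_0^t\!\!\int |u|^{10/3}\phi_k\Bigr)^{\!3/10} \Bigl(\int_0^t\!\!\int |\nb u|^2\phi_k\Bigr)^{\!1/2}.
\]
The first factor is controlled uniformly in $k$ by $\|v\|_{E^{5,5}_{T_0,\infty}}$; the product of the last two is bounded by $e_{1,t}(k)^{4/5}$ via Gagliardo--Nirenberg in space-time. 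Summing in $\ell^{q/2}(k\in \ZZ^3)$ and treating the pressure tail via Young's convolution inequality exactly as in Lemma \ref{lem.A0qbound}, the new contributions appear with a small prefactor that is either $O(\delta)$ or $O(T^\alpha \|v\|_{E^{5,5}_{T_0,\infty}})$ with $\alpha>0$; picking $T\le \min(T_0,\la_0,\la_0 A_{0,q}^{-2})$ and $\delta\le c_0$ small closes a continuity argument and yields $\|u^{(\epsilon)}\|_{\mathbf{LE}_q(0,T)} \le C\|u_0\|_{E^2_q}$ uniformly in $\epsilon$.

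Finally, I would pass to the limit $\epsilon\to 0$ by the Aubin--Lions compactness argument as in \cite{BT4,LR2}: the uniform $\mathbf{LE}_q$ bound combined with the equation's tangential regularity in time produces strong convergence $u^{(\epsilon)}\to u$ in $L^3_\loc(\R^3\times (0,T))$, which suffices to identify the limit in the quadratic term $\mathcal J_\epsilon(u^{(\epsilon)})\otimes u^{(\epsilon)}$ as well as in the two drift terms (both linear in $u$). The local energy inequality for $(u,\pi)$ is preserved by the usual lower-semicontinuity in the quadratic term and by the strong convergence in the linear $v$-terms. The main technical obstacle is precisely the new stretching term: the $\ell^{q/2}$-summation closes only because the outer $\ell^\infty$ in $k$ in $\|v\|_{E^{5,5}_{T_0,\infty}}$ allows the $L^5_{t,x}$-bound on $v$ to be pulled uniformly outside the sum. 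If instead one only had a spacetime bound with outer $\ell^q$ tied to $v$ itself, the convolution structure in the pressure would force additional smallness assumptions on $v$ that are unavailable under the stated hypotheses.
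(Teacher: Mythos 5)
Your proposal rebuilds from scratch the machinery that the paper simply imports: the paper's proof is a short patch of \cite[Lemma 4.4]{BT4}, in which the approximation scheme, the a priori $\ell^{q/2}$-summed energy bounds, and the pressure construction are all cited verbatim (they use only $\esssup_t\norm{v}_{L^3_\uloc}<\delta\le c_0$), and the \emph{only} new work is the verification of the local energy inequality in the limit, where the two terms $I_{1,n}$ and $I_{3,n}$ that previously needed $v\in L^\I L^4_\uloc$ are re-estimated using $\norm{v}_{E^{5,5}_{T_0,\infty}}$. You have correctly identified the one place the new hypothesis can act, namely the H\"older triple $\tfrac15+\tfrac3{10}+\tfrac12=1$ against $u\otimes\nb u$, but you deploy it in the wrong place, and the step where you use it fails. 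In your a priori estimate you bound the stretching contribution by $\norm{v}_{L^5(0,t;L^5(B_2(k)))}\norm{u}_{L^{10/3}}\norm{\nb u}_{L^2}$ and claim a prefactor $O(T^\alpha\norm{v}_{E^{5,5}_{T_0,\infty}})$ with $\alpha>0$. No positive power of $T$ is available here: the hypothesis is only an $\ell^\infty_k$ bound on $L^5$ norms over all of $(0,T_0)$, so $\sup_k\norm{v}_{L^5(0,T;L^5(B_2(k)))}$ is bounded but need not become small as $T\to0$, and H\"older in time cannot upgrade an $L^5_t$ norm to one carrying $T^\alpha$. Moreover the Gagliardo--Nirenberg count gives $\norm{u}_{L^{10/3}}\norm{\nb u}_{L^2}\lec e_{1,t}(k)^{1/5}\bke{\int_0^t\int|\nb u|^2\phi_k}^{4/5}\le e_{1,t}(k)$, not $e_{1,t}(k)^{4/5}$; the resulting term is linear in the local energy with an $O(1)$ coefficient and cannot be absorbed. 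The repair is exactly what \cite[Lemma 4.4]{BT4} does: estimate this term in the energy inequality by H\"older with exponents $(3,6,2)$ and Sobolev, producing a factor $\norm{v}_{L^\I L^3_\uloc}\le\delta\le c_0$ in front of the dissipation, so that the $E^{5,5}$ norm is never needed for the a priori bound.

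The $E^{5,5}_{T_0,\infty}$ hypothesis is needed only in the compactness step, which your last paragraph treats too loosely. The term corresponding to $I_{1,n}=\int((u_n-u)\cdot\nb u_n)\cdot(v\phi)$ is quadratic in the approximating sequence (not ``linear in $u$''), and it vanishes only because Aubin--Lions gives strong convergence $u_n\to u$ in $L^{10/3}(B\times(0,T_0))$ --- the endpoint $\tfrac2s+\tfrac3q=\tfrac32$ --- which pairs with $v\phi\in L^5$ and the merely bounded $\nb u_n\in L^2$; strong convergence in $L^3_\loc$, which is all you claim, does not close this tight triple (interpolation with the uniform $L^{10/3}$ bound only reaches $L^p$ for $p<10/3$). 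The commutator term $I_{3,n}$ requires $\eta_{\e_n}*v\to v$ in $L^5(0,T_0;L^5(B))$, which is precisely where a finite spatial integrability exponent is indispensable and where $L^\I L^3_\uloc$ alone fails by lack of dominated convergence. With the stretching term moved back to the $\delta$-smallness estimate and the limit passage carried out in $L^{10/3}$, your argument coincides in substance with the paper's, at the cost of re-deriving what \cite[Lemma 4.4]{BT4} already provides.
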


\begin{proof}[Proof of Proposition \ref{corollary}]
We adopt all of the notation and setup of \cite[Proof of Lemma 4.4]{BT4}. The only difference in the present proof compared to \cite[Proof of Lemma 4.4]{BT4} involves the local energy inequality, which should hold for non-negative test functions $\phi \in C_c^\I( \R^3\times [0,T))$. 
The steps in \cite[pp. 2017-2018]{BT4} apply here except for those using the assumption that $\esssup_{0<t\leq T_0}\| v(t) \|_{L^4_\uloc}<\I$, namely the estimation of $I_{1,n}$ and $I_{3,n}$. Note for any ball $B\subset \R^3$ and taking $\{u_n\}$ to be the sequence defined on \cite[p. 2017]{BT4}, $u_n \to u$ weakly in $L^2(0,T_0; L^6(B))$ and  strongly in $L^s(0,T_0; L^q(B))$, $\frac 2s +\frac 3q=\frac 32$, $q<6$. 
Letting $B$ be a ball containing the spatial support of $\phi(t)$ for all $0\leq t\leq T_0$, we have,
\EQN{
I_{1,n}&=\bigg|\int_0^{T_0} \int ((u_n- u)\cdot \nb u_n)\cdot (v\phi ) \,dx\,dt\bigg| 
\\&\lec_\phi \norm{u-u_n}_{L^{10/3}(B\times(0,T_0))} \norm{v}_{E^{5,5}_{T_0,\infty}} \norm{\nb u_n}_{L^2(B\times(0,T_0))} \to 0.
}
For the remaining term we have that
\EQN{
I_{3,n}  &=  \bigg| \int_0^{T_0} \int  (u_n\cdot \nb u_n )\cdot (\eta_{\e_n} * v-v) \phi \,dx\,dt \bigg|
\\&\lesssim \norm{u_n}_{L^{10/3}(B\times(0,T_0))} \norm{\eta_{\e_n} * v-v}_{L^5(0,T_0; L^5(B))} \norm{\nb u_n}_{L^2(B\times(0,T_0))} .
}
Recall from \cite{BT4} that $\eta_{\e_n} * v$ is a spatial mollifier applied to $v$. 
So, by properties of spatial mollifiers and the dominated convergence theorem, the middle term above vanishes.  
This and the work in \cite[pp. 2017-2018]{BT4} establishes the local energy inequality for $\phi \in C_c^\I(\R^3\times [0,T))$.
\end{proof}

Note that without the spacetime integral estimate we would only have the inclusion   
$v \in L^\infty(0,T_0; L^3_\uloc)$ for our drift velocity. 
 The best this gives is when bounding $I_{1,n}$ is
\EQN{
&\norm{\nb u_n}_{L^2(B\times(0,T_0))}\norm{u_nv-uv}_{L^2(B\times(0,T_0))}  
\\&\lec \norm{\nb u_n}_{L^2(B\times(0,T_0))}
\norm{u_n-u}_{L^2(0,T_0;L^6)} \norm{v}_{L^\I(0,T_0;L^3_\uloc)},
}
which is not implied to vanish since we only have weak convergence in $L^2\dot H^1_\loc$.
This is why the assumption $v\in L^\I L^4_\uloc$ is included in \cite
[Lemma 4.4]{BT4}.  On the other hand, the bound for $I_{3,n}$ requires a space-time Lebesgue norm is finite with finite spatial integrability exponent in order to apply the dominated convergence theorem.  

\begin{remark}
In \cite[p.~2018]{BT4}, it is asserted that $(\eta_\e *v )\phi \to v\phi$ in $L^\I(0,T;L^{3}_\uloc)$. This appears to be false. In the reference, and adopting its notation, this can be fixed using the following estimate
\EQN{
&\int_0^t \int(u_n\cdot\nb u_n)\cdot (\eta_\e *v  - v)\phi\,dx\,ds 
\\&\lesssim \| u_n\|_{L^\I (0,T;L^2_\uloc)} ^{{1/4}}\int_0^t 	{\| u_n	 	\|_{H^{1}(B)}^{7/4}}	 \|  \eta_\e *v - v \|_{L^4(B)}\,ds
\\&\lesssim \| u_n\|_{L^\I (0,T;L^2_\uloc)} ^{{1/4}} {\| u_n \|_{L^2(0,T; H^1(B))}^{7/4}} \|  \eta_\e *v - v \|_{L^{8}(0,T;L^4(B))}.
}

The last quantity above vanishes as $\e\to 0$ by the dominated convergence theorem and the fact that $v\in L^\I (0,T; L^4_\uloc)$.  Since the other terms are uniformly bounded in $n$, the left hand side vanishes as well. 
\end{remark}

\section{Appendix}\label{sec5}

In this appendix, we first give details of the endpoint case of Giga's estimates, and then give examples showing the strict inclusions between $L^p$, $E^p_q$, $E^{s,p}_{T,q}$ and $L^s_T E^p_q$.
\subsection{Endpoint case of Giga's estimates}\label{sec5.1}
It is mentioned in \cite[Acknowledgments]{Giga} that the case $r=s>1$ in \eqref{Giga-est} is also valid if one appeals to the generalized Marcinkiewicz theorem. Here we give the details. Let $\Om$ be a bounded smooth open set in {$\R^d$} or {$\R^d$} itself.
Let $A=-\De$ or the Stokes operator.
For fixed $p\in(1,\infty)$,
the \emph{subadditive} map $U(a) = \norm{ e^{-tA} \mathbb{P} a}_p$ for $a \in L^r(\Om)$ or $a \in L^r_\si(\Om)$,
\[
  U: L^r (\Om) \to L^s_{wk}(0,\infty)
\]
is of weak type $(r,s)$ if $2/s=d/r -d/p$, by the $L^r$-$L^p$ estimate of $e^{-tA}$. By Marcinkiewicz’s theorem in Lorentz spaces,
Theorem V.3.15 of Stein-Weiss \cite[p.197]{SteinWeiss},  %
\[
\norm{U(a)}_{L^{s,m}(0,\infty)} \lec \norm{a}_{ L^{r,m}(\Om)}, 
\quad \forall m \in [1,\infty], \quad \forall a \in L^r\cap L^{r,m},
\]
when $r,s\in(1,\infty)$
satisfy $2/s=d/r -d/p$.  In particular, when $r=m \le s$,
\[
\norm{U(a)}_{L^s(0,\infty)}=\norm{U(a)}_{L^{s,s}(0,\infty)} \le\norm{U(a)}_{L^{s,r}(0,\infty)} \lec \norm{a}_{ L^{r,r}(\Om)}=\norm{a}_{ L^{r}(\Om)}.
\]

Marcinkiewicz's theorem for subadditive maps in Lorentz spaces for full range of exponents
was first shown by Hunt \cite{Hunt64} and
Calder\'on \cite{Calderon66}. Indeed, the theorem of Hunt is for quasi-linear maps: $|T(f+g)| \le K(|Tf|+|Tg|)$. A subadditive map is when $K=1$.
 There is a proof for linear maps in Lorentz spaces in Bergh-L\"ofstr\"om \cite[Theorem 5.3.2]{BL}. See \cite{Hunt64,Calderon66,Hunt66},
\cite[p.20]{BL} and \cite[p.216]{SteinWeiss} for more references.

The case $s=\infty$ in \eqref{Giga-est} is a direct consequence of the $L^p$ estimate of Stokes semigroup: 
\[
\norm{e^{-tA} \mathbb{P}a}_p \lec \norm{a}_p.
\]

\subsection{Strict inclusions between functional spaces}
\begin{lemma}
Consider $L^p(\R^d)$ and $E^p_q(\R^d)$ for $p,q\in [1,\oo]$. When $p>q$,
\EQ{\label{inclusion}
E^p_q \subset (L^p \cap L^q).
}
In contrast, when $p<q$, then
\EQ{\label{inclusion2}
L^p + L^q \subset E^p_q.
}
Both inclusions are strict.
\end{lemma}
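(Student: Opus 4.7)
The plan is to prove both inclusions by elementary reductions and then produce explicit counterexamples built from disjoint bumps of carefully chosen heights and radii.

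For the inclusion \eqref{inclusion} when $p>q$, I would argue as follows. Given $f\in E^p_q$, set $a_k := \|f\|_{L^p(B_1(k))}$, so that $\{a_k\}\in \ell^q$. Since the unit balls $B_1(k)$ cover $\R^d$ with bounded overlap and since $q\le p$ yields $\ell^q\subset \ell^p$, we get $\|f\|_{L^p}^p \lesssim \sum_k a_k^p<\infty$. For the $L^q$ bound, Hölder on $B_1(k)$ (with finite measure and $p>q$) gives $\|f\|_{L^q(B_1(k))}\le C\,a_k$, and summing to the $q$-th power yields $\|f\|_{L^q}^q \lesssim \sum_k a_k^q = \|f\|_{E^p_q}^q$. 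The inclusion \eqref{inclusion2} when $p<q$ is dual in spirit: writing $f=f_1+f_2$ with $f_1\in L^p$, $f_2\in L^q$, Hölder on each ball gives $\|f_2\|_{L^p(B_1(k))}\le C\|f_2\|_{L^q(B_1(k))}$, while $\{\|f_1\|_{L^p(B_1(k))}\}\in \ell^p\subset \ell^q$ and $\{\|f_2\|_{L^q(B_1(k))}\}\in \ell^q$, so the triangle inequality in $\ell^q$ completes the bound.

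For the strictness of \eqref{inclusion} when $p>q$, the idea is to construct $f$ from disjoint bumps $f_k$ supported in small balls $B_{r_k}(k)\subset B_1(k)$ of height $h_k$, with $r_k$ small and $h_k$ large. Setting $c_k := h_k r_k^{d/p} = \|f_k\|_{L^p}$, one computes $\|f_k\|_{L^q}^q = c_k^q\, r_k^{d(1-q/p)}$, which for fixed $c_k$ can be made arbitrarily small by shrinking $r_k$ since $1-q/p>0$. The plan is to choose $c_k = k^{-\alpha}$ with $1/p<\alpha\le 1/q$ so that $\{c_k\}\in \ell^p\setminus \ell^q$, giving $f\in L^p$ immediately and $f\notin E^p_q$ since $\|f\|_{E^p_q}^q \approx \sum c_k^q = \infty$, while picking $r_k$ small enough to guarantee summability in $L^q$.

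For the strictness of \eqref{inclusion2} when $p<q$, I would use a dual construction with disjoint tall narrow bumps of height $h_k>1$, so that $f\chi_{\{|f|\le 1\}}\equiv 0$ and hence $f\in L^p+L^q$ is equivalent to $f\in L^p$. With the same notation $c_k = h_k r_k^{d/p}$, one has $\|f\|_{E^p_q}^q \approx \sum c_k^q$ and $\|f\|_{L^p}^p \approx \sum c_k^p$; since $p<q$, choosing $c_k = k^{-\alpha}$ with $1/q<\alpha\le 1/p$ gives $\{c_k\}\in \ell^q\setminus \ell^p$, so $f\in E^p_q$ but $f\notin L^p+L^q$. The only real subtlety is ensuring $h_k>1$ on each bump, which is handled by taking $r_k\to 0$ fast enough that $h_k = c_k r_k^{-d/p}\to\infty$; apart from this bookkeeping, the arguments are essentially a matter of unpacking the definition, and I do not anticipate any serious obstacle.
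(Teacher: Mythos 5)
Your proof is correct, and for the containments and for the strictness of \eqref{inclusion} it is essentially the paper's argument: the paper also takes the inclusions as straightforward and builds its counterexamples from disjoint bumps $\sum_k c_k\phi((x-k)/\de_k)$ at lattice points (it fixes the heights $c_k\equiv 1$ and shrinks the radii, $\de_k=(1+|k|)^{-p/q}$, whereas you fix the $L^p$ mass $c_k$ of each bump and shrink the radius; this is the same family of examples, differently parametrized). Where you genuinely diverge is the strictness of \eqref{inclusion2}. The paper's counterexample there is the \emph{bounded} function with $c_k\equiv 1$, $\de_k=(1+|k|)^{-1}$; since that function never exceeds $1$, ruling out a decomposition $a=f+g$ with $f\in L^p$, $g\in L^q$ requires a measure-theoretic splitting argument (on each bump at least a third of the ball must carry either $f>\tfrac13$ or $g>\tfrac13$, and summing the resulting lower bounds forces $\int|f|^p+|g|^q=\infty$). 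You instead make the bumps tall, $h_k>1$, so that $f=f\chi_{\{|f|>1\}}$, and invoke the standard characterization of $L^p+L^q$ for $p<q$ (namely $f\in L^p+L^q$ iff $f\chi_{\{|f|>1\}}\in L^p$ and $f\chi_{\{|f|\le1\}}\in L^q$) to reduce everything to $f\notin L^p$, which your choice $\{c_k\}\in\ell^q\setminus\ell^p$ gives immediately. That characterization is the one step you use without proof; the direction you need (if $f=g+h$ with $g\in L^p$, $h\in L^q$, then $f\chi_{\{|f|>1\}}\in L^p$) is a short exercise — split according to whether $|h|\le|f|/2$, and use that $|h|>\tfrac12$ implies $|h|^p\le 2^{q-p}|h|^q$ — so the shortcut is legitimate. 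It buys a noticeably simpler argument than the paper's, at the cost of a less extremal example: your function is unbounded, while the paper's shows that even a function with sup norm $1$ can lie in $E^p_q\setminus(L^p+L^q)$.
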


\begin{proof}
Both inclusions are straight-forward to verify.
We now show they are strict.
Let
\[
a(x) = \sum_{k\in \ZZ^d} c_k \phi\bke{\frac {x-k}{\de_k}},
\]
for $\phi= \chi_{B_{1/2}}$ and $c_k ,\de_k\in (0,1]$.
We have
\EQ{\label{0823a}
\norm{a}_{E^p_q} = c_{p,q} \bigg( \sum_{k\in \ZZ^d} c_k ^q \de_k^{dq/p} \bigg)^{1/q}.
}

When $p>q$, using \eqref{0823a}, we have $a \in  (L^p \cap L^q) \setminus E^p_q$ if
\[
\sum_{k\in \ZZ^d} c_k^q \de_k^d < \infty, \quad \sum_{k\in \ZZ^d} c_k^q \de_k^{dq/p} = \infty.
\]
We may take for example
\[
c_k = 1, \quad \de_k = (1+|k|)^{-\frac p{q}}.
\]
This shows that the inclusion \eqref{inclusion} is strict.

When $p<q$, using \eqref{0823a},
we have $a \in E^p_q$, $a\not\in L^p $ and $a\not\in L^q $ if
\[
\sum_{k\in \ZZ^d} c_k^p \de_k^d = \infty, \quad \sum_{k\in \ZZ^d} c_k^q \de_k^{dq/p} < \infty.
\]
We may take for example
\[
c_k = 1, \quad \de_k = (1+ |k|)^{-1}.
\]
For this choice of $c_k$ and $\de_k$, we claim  $a \in E^p_q \setminus  (L^p + L^q) $. Suppose $a=f+g$ with $f \in L^p$ and $g \in L^q$. We may assume $f \ge 0$ by replacing $f,g$ by
\[
\td f = f \chi_{\{f >0\}},\quad \td g = a - f \chi_{\{f>0\}} = f\chi_{\{f<0\}} + g.
\]
Note $0 \le \td f \le f \in L^p$ and, where $f <0$, %
$0\le a=\td g \le g \in L^q(f<0)$,
and where $f\ge 0$, $\td g= g \in L^q(f\ge 0)$. Similarly we may assume $g \ge 0$. For $k\in \ZZ^d$, let $\e_k = 1$ if the set $ \{f >\frac 13\}\cap B_{\de_k/2}(k)$ has measure at least $\frac13$ of $B_{\de_k/2}$, and $\e_k=0$ otherwise. Also let $\mu_k = 1$ if the set $ \{g >\frac 13\}\cap B_{\de_k/2}(k)$ has measure at least $\frac13$ of $B_{\de_k/2}$, and $\mu_k=0$ otherwise. Since $a=1$ on $B_{\de_k/2}(k)$, we have
\EQ{\label{0823b}
\e_k+\mu_k \ge 1 ,\quad \forall k \in \ZZ^d.
}
Let
\[
F(x) = \sum_{k\in \ZZ^d} c_k \e_k \phi\bke{\frac {x-k}{\de_k}}, \quad 
G(x) = \sum_{k\in \ZZ^d} c_k \mu_k \phi\bke{\frac {x-k}{\de_k}}.
\]
By definition of $\e_k$, $\norm{f}_{L^p(B_1(k))} \ge C\norm{F}_{L^p(B_1(k))}$ for all $k$, hence $\norm{f}_{L^p(\R^d)} \ge C\norm{F}_{L^p(\R^d)}$. Similarly, 
$\norm{g}_{L^q(\R^d)} \ge C\norm{G}_{L^q(\R^d)}$.
However, by \eqref{0823a}, $c_k=1$,  and \eqref{0823b}, 
\EQN{
\int_{\R^d} |f|^p + |g|^q &\ge C\norm{F}_{L^p(\R^d)} ^p +C\norm{G}_{L^q(\R^d)} ^q
\\
&=  C\sum_{k\in\ZZ^d} (c_k \e_k)^p \de_k^d + C\sum_{k\in\ZZ^d} (c_k \mu_k)^q \de_k^d
\\
&=  C\sum_{k\in\ZZ^d}  \e_k \de_k^d + C\sum_{k\in\ZZ^d} \mu_k\de_k^d
\\
&\ge C \sum_{k\in\ZZ^d} \de_k^d = \oo,
}
which is a contradiction to $f\in L^p$ and $g\in L^q$. This shows that the inclusion \eqref{inclusion2} is strict.
\end{proof}

\begin{example}\label{example52}
Here we show that \eqref{ineq:embedding.parabolic.space} and \eqref{ineq:embedding.parabolic.space2} may fail if the order of $q$ and $s$ are switched.
For \eqref{ineq:embedding.parabolic.space},
take for example $s=p=1$, $q=\infty$, and $T=1$. We have $\norm{u}_{E^{1,1}_{T,\uloc}} \le  \norm{u}_{L^1_T L^1_\uloc}$ by \eqref{ineq:embedding.parabolic.space2}.
Let
\EQ{
u(x,t) =  \sum_{k\in \NN}  \frac {2^k}{|B_1|}\cdot\one_{B_1( 2ke_1)}(x) \cdot  \one _{(2^{-k},2^{-k+1}]}(t).
}
then $\norm{u(t)}_{L^1_\uloc}=2^k$ for $t \in (2^{-k},2^{-k+1}]$. Hence
\[
\norm{u}_{E^{1,1}_{T=1,\uloc}} =1, \quad\norm{u}_{L^1_T L^1_\uloc} = \sum_k 1 = \infty,
\quad
\norm{u}_{E^{1,1}_{T=1,\uloc}}  \not \ge C \norm{u}_{L^1_T L^1_\uloc}.
\]
This shows the failure of \eqref{ineq:embedding.parabolic.space} if $q>s$.
As another example, take $p=q=1$ and $s=\infty$. We have $\norm{u}_{L^\infty_T E^1_1} \le \norm{u}_{E^{\infty,1}_{T,1}}  $ by \eqref{ineq:embedding.parabolic.space}.
Let $x_0(t)=(\cot t, 0,\ldots,0)$ and
\EQ{
u(x,t) = 1 \quad \text{if }  |x - x_0(t)|<1; \quad u(x,t)=0 \quad \text{otherwise}.
}
Then $\norm{u}_{L^\infty_{T=\pi} E^1_1} < \infty$ while  $\norm{u}_{E^{\infty,1}_{T=\pi,1}}  =\infty$.
This shows the failure of \eqref{ineq:embedding.parabolic.space2} if $q<s$.
\end{example}

\section*{Acknowledgments}
 Bradshaw was supported in part by the Simons Foundation (635438). 
The research of both Lai and Tsai was partially supported by the NSERC grant RGPIN-2018-04137. %
Lai acknowledges support by the Simons Foundation Math + X Investigator Award \#376319 (Michael I. Weinstein).  A preliminary version of this paper was presented in a RIMS (Research Institute for
Mathematical Sciences, Kyoto University) workshop on December 7, 2021, and received fruitful questions. 
We thank Professor Yoshikazu Giga for sharing two insightful remarks and referring us to several related papers.

\section*{Disclosure statement}
The authors report there are no competing interests to declare.

\addcontentsline{toc}{section}{\protect\numberline{}{References}}

 
 

\end{document}